 \newtheorem{thm}{Theorem}[section]
 \newtheorem{cor}[thm]{Corollary}
 \newtheorem{lem}[thm]{Lemma}
 \newtheorem{prop}[thm]{Proposition}
 \newtheorem{con}[thm]{Conjecture}
 \theoremstyle{definition}
 \theoremstyle{remark}
 \numberwithin{equation}{section}
\DeclareMathOperator{\Capp}{Cap}
\DeclareMathOperator{\dist}{dist}
\DeclareMathOperator{\Mod}{Mod}
\begin{document}

%
%
%
%
%
%
%
%
%

\title[Sobolev abstract]{Vector-valued Sobolev spaces based on Banach function spaces}

\author[N. Evseev, \today]{Nikita Evseev}

\address{%
}

\email{evseev@math.nsc.ru}




\date{\today}

\begin{abstract}
It is known that for Banach valued functions there are several approaches to define a Sobolev class. We compare the usual definition via weak derivatives with the Reshetnyak-Sobolev space and with the Newtonian space; in particular, we provide sufficient conditions when all three agree. 
As well we revise the difference quotient criterion
and the property of Lipschitz mapping to preserve Sobolev space when it acting as a superposition operator.
\end{abstract}

\maketitle

\section{Introduction}
Our primary motivation behind this work is to provide non-differential characterization of Sobolev spaces.
In particular, this would supply us with tools for analysing functions valued in a family of Banach spaces, e.g. \cite{EM2020}. 
Such functions typically appear in the theory of evolution PDEs.
The other side of the work is that we consider Sobolev type spaces built upon a general Banach function norm.  

A general idea of our study is to make use of metrical analysis but taking into account the presence of a linear structure. 
The theory of Sobolev spaces on metric measure spaces is quite developed now.
For a detailed treatment and for references to the literature on
the subject, one may refer to the \cite{H2007} by J.~{Heinonen}, \cite{HP2000} by P.~{Haj{\l}asz} and P.~{Koskela}, 
and \cite{HKST01} by J.~Heinonen, P.~Koskela, N.~Shanmugalingam, J.T.~Tyson.

In the present paper, we study the Sobolev space of vector-valued functions $W^1X(\Omega;V)$ based on a Banach function space $X(\Omega)$, where $\Omega\subset\mathbb R^n$.
We discuss the connection with the Newtonian space $N^1X$ and with the Reshetnyak-Sobolev space $R^1X$ and 
provide sufficient conditions when $W^1X = R^1X = N^1X$.
More precisely, we prove that $W^1X=R^1X$ iff $V$ has the Radon-Nikod\'ym property,
whereas $R^1X=N^1X$ whenever the Meyers-Serrin theorem holds true for $W^1X(\Omega; \mathbb R)$.
Besides, we provide the difference quotient criterion and, as a consequence, obtain a version of pointwise description for Sobolev functions.  
Finally, we consider a question when Lipschitz mapping $f:V\to Z$ preserve a Sobolev class. It is always the case for $R^1X$ and $N^1X$,
while we should assume  that $Z$ enjoys the Radon-Nikod\'ym property to have inclusion $f(W^1X(\Omega; V))\subset W^1X(\Omega; Z)$.  

It happened that merely in the same time I. Caama\~{n}o, J.~A. Jaramillo, \'{A}. Prieto, and A.~Ruiz in \cite{CJPA2020} did the research on the subject.

\section{Preliminaries}
Let $\Omega\subset \mathbb R^n$ and
$M(\Omega)$ be the set of all real-valued measurable functions on $\Omega$.
A Banach space $X(\Omega)$ is said to be a Banach function space if it satisfies the following conditions:

\begin{enumerate}[(P1)]
\item if $|f|\leq g$ with $f\in M(\Omega)$ and $g\in X(\Omega)$, then $f\in X(\Omega)$ and $\|f\|_{X(\Omega)}\leq \|g\|_{X(\Omega)}$ (the lattice property);
\item if $0\leq f_n \nearrow f$ a.e., then $\|f_n\|_{X(\Omega)}\nearrow\|f\|_{X(\Omega)}$ (the Fatou property);
\item for any measurable set $A\subset\Omega$ with $|A|<\infty$ we have $\chi_A\in X(\Omega)$;
\item for any measurable set $A\subset\Omega$ with $|A|<\infty$ there exists a positive constant $C_A$ such that 
$\|f\|_{L^1(\Omega)} \leq C_A\|f\cdot\chi_{A}\|_{X(\Omega)}$ for all $f\in X(\Omega)$.
\end{enumerate}
When there is no ambiguity, we write $\|\cdot\|_X$ for $\|\cdot\|_{X(\Omega)}$.

Here we collect some notions and properties from the theory of Banach function spaces that are necessary.
For a comprehensive exposition of the theory, we refer the reader to book \cite{PKJF2013}.  

Let $\{A_n\}$ be a sequence of measurable subsets of $\Omega$,
we say $A_n\to\emptyset$ if $\chi_{A_n}\to 0$ a.e. on $\Omega$.
Banach function space $X(\Omega)$ has \textit{an absolutely continuous norm} if  
$\|f\cdot\chi_{A_n}\|_{X(\Omega)}\to 0$ whenever $A_n\to\emptyset$ for any $f\in X(\Omega)$.
(Examples $L^p$ ($1\leq p<\infty$), Lorentz $L^{p,q}$ ($1\leq q< \infty$),  see \cite[p. 216]{PKJF2013}.) 

Define the \textit{translation operator} $\tau_h$, with $h\in\mathbb R^n$ for $u\in M(\Omega)$ by 
\[
\tau_h u(x) =\begin{cases} u(x+h), & \text{ if } x+h\in\Omega,\\
			0, & \text{ if } x+h \not\in\Omega.
\end{cases}
\]
We say that $\|\cdot\|_{X(\Omega)}$ has \textit{the translation inequality property}
if for all $u\in X(\Omega)$ and all $h\in\mathbb R^n$ $\|\tau_hu\|_{X}\leq\|u\|_X$.
Note that every rearrangement invariant function norm over domain $\Omega$ possesses the translation inequality property.

Let $X(\Omega)$ be a Banach function space and let $X'(\Omega)$ be
its associate space. 
Then, for functions $u\in X(\Omega)$ and $v\in X'(\Omega)$
the following H\"older inequality holds
\[
\int_\Omega |uv|\, dx \leq \|u\|_X \|v\|_{X'}, 
\]
see \cite[Theorem 6.2.6]{PKJF2013}.
We will need the following Fatou lemma for Banach function spaces.
\begin{lem}[{\cite[Lemma 6.1.12]{PKJF2013}}]\label{lemma:Fatou}
Let $X(\Omega)$ be a Banach function space and assume that $f_n\in X(\Omega)$ and $f_n\to f$ a.e. on $\Omega$ for some $f\in M(\Omega)$.
Assume further that
\[
\liminf_{n\to\infty} \|f_n\|_X\leq\infty.
\]
Then $f\in X(\Omega)$ and
\[
\|f\|_X \leq \liminf_{n\to\infty} \|f_n\|_X.
\]
\end{lem}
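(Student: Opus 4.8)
The plan is to reduce the statement to the Fatou property (P2), which is precisely the monotone version of the asserted inequality. Since $f_n\to f$ a.e.\ on $\Omega$, also $|f_n|\to|f|$ a.e., and hence $\liminf_{n\to\infty}|f_n|=|f|$ a.e.\ on $\Omega$. Write $L:=\liminf_{n\to\infty}\|f_n\|_X$; there is nothing to prove unless $L<\infty$, so assume this, and choose a subsequence $(f_{n_j})_{j}$ with $\|f_{n_j}\|_X\to L$.

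For each $j$ set $g_j:=\inf_{k\ge j}|f_{n_k}|$. Then $g_j\in M(\Omega)$ (a countable infimum of measurable functions), $0\le g_j\le|f_{n_j}|$ pointwise, and $g_j\nearrow\liminf_{j}|f_{n_j}|=|f|$ a.e.\ on $\Omega$. By the lattice property (P1) we get $g_j\in X(\Omega)$ with $\|g_j\|_X\le\|f_{n_j}\|_X$, and by the Fatou property (P2) we get $\|g_j\|_X\nearrow\|f\|_X$, where a priori this limit could be $+\infty$. Putting these together,
\[
\|f\|_X=\lim_{j\to\infty}\|g_j\|_X\le\lim_{j\to\infty}\|f_{n_j}\|_X=L=\liminf_{n\to\infty}\|f_n\|_X<\infty .
\]
In the Banach function space framework $\|\cdot\|_X$ is a functional on all of $M(\Omega)$ with values in $[0,\infty]$, and $X(\Omega)$ is exactly the set on which it is finite, so finiteness of $\|f\|_X$ already gives $f\in X(\Omega)$; this completes the argument.

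The only genuine subtlety — the part I expect to be the main obstacle — is that we are handed a $\liminf$ rather than an honest limit, and the sequence $|f_n|$ is in general not monotone, so (P2) cannot be applied to it directly. The remedy is the standard one: pass to a subsequence that realizes the $\liminf$ and then replace $|f_{n_k}|$ by its increasing envelope $g_j=\inf_{k\ge j}|f_{n_k}|$, which does converge monotonically to $|f|$; after that the lattice and Fatou properties do all the work and the rest is bookkeeping.
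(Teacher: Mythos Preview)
Your argument is correct and is exactly the standard proof: pass to a subsequence realizing the $\liminf$, form the increasing envelope $g_j=\inf_{k\ge j}|f_{n_k}|$, and apply the lattice property (P1) together with the Fatou property (P2). The paper itself does not supply a proof of this lemma---it is simply quoted from \cite[Lemma~6.1.12]{PKJF2013}---so there is nothing in the paper to compare your approach against; your write-up is precisely the argument one finds in that reference. Your implicit correction of the hypothesis to $\liminf_n\|f_n\|_X<\infty$ (the printed ``$\le\infty$'' is vacuous and clearly a typo) is also right, since without it the conclusion $f\in X(\Omega)$ can fail.
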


\textit{Minkowski's integral inequality} for function norms $\|\|f(x,y)\|_Y\|_X \leq M\|\|f(x,y)\|_X\|_Y$
holds for all measurable functions and some fixed constant $M$ whenever there is  $p\in[1,\infty]$ such that $\|\cdot\|_X$ is $p$-concave
and $\|\cdot\|_Y$ is $p$-convex \cite{S1994}. In particular,  
\begin{equation}\label{eq:MInkowskii}
\bigg\|\int_Af(\cdot,y)\, dy\bigg\|_X \leq \int_A\|f(\cdot,y)\|_X\, dy.
\end{equation}

Also, we briefly provide some notions and facts from the analysis in Banach spaces.
Let $V$ be a Banach space.
A function $u:\Omega\to V$ is said to be strongly measurable if there is a sequence of simple functions
$u_k = \sum_{i=1}^{N_k}v_i\chi_{A_i}$, $v_i\in V$ such that $\|u - u_k\|_V\to 0$ a.e. on $\Omega$.
There is the theory of Bochner integral, which allows us to integrate vector-valued functions and supplies us with all necessary tools.  
By $X(\Omega; V)$, we denote the collection of all strongly measurable functions $u:\Omega\to V$ for which $\|u(\cdot)\|_V\in X(\Omega)$.
Together with the norm $\|u\|_{X(\Omega;V)} = \big\|\|u(\cdot)\|_V\big\|_{X(\Omega)}$, it becomes a Banach space (see \cite[p. 177]{L2004}). 
We say that $\tilde u$ is \textit{a representative} of $u$ if $u=\tilde u$ a.e.

There are several notions connected to absolute continuity that we use. 
A function $u:[a,b]\to V$ is said to be \textit{absolutely continuous}, if for any $\varepsilon>0$ there exists $\delta>0$ such that
$\sum_{i=1}^m\|u(b_i)-u(a_i)\|_{V} \leq \varepsilon$ 
for any collection of disjoint intervals $\{[a_i,b_i]\}\subset [a,b]$ such that
$\sum_{i=1}^m(b_i-a_i) \leq \delta$.
A function $u:\Omega\to V$ is said to be \textit{absolutely continuous on a curve}
$\gamma$ in $\Omega$ if $\gamma : [0, l(\gamma)]\to \Omega$ is rectifiable, parametrized by the arc
length, and the function $u\circ\gamma : [0, l(\gamma)] \to V$ is absolutely continuous.
A function $u:\Omega\to V$ is said to be \textit{absolutely continuous on lines} in $\Omega$ (belongs to $ACL(\Omega)$)
if $u$ is absolutely continuous on almost every compact line segment in $\Omega$ parallel to the coordinate axes.

Let $(\Omega, \Sigma, \mu)$ be a $\sigma$-finite complete measure space.
A Banach space $V$ has the \textit{Radon-Nikod\'ym property} (RNP) if for any measure 
$\nu:\Sigma\to V$ with bounded variation that is absolutely continuous with respect to $\mu$,
there exist a function $f\in L^1(\Omega; V)$ such that $\nu(A) = \int_{A}f\, d\mu$ for all $A\in\Sigma$. 
However, for our purposes we make use of equivalent descriptions for this property:
\begin{prop}[{\cite[Theorem 2.5.12]{HVVW2016}}]\label{prop:RNP}
For any Banach space $V$, the following assertions are equivalent:
\begin{enumerate}[(1)]
\item $V$ has the Radon-Nikod\'ym property;
\item every locally absolutely continuous function 
$f:\mathbb R\to V$ is differentiable almost everywhere;
\item every locally Lipschitz continuous function 
$f:\mathbb R\to V$ is differentiable almost everywhere.
\end{enumerate}
\end{prop}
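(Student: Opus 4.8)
\emph{Proof plan.} The plan is to prove the cycle $(1)\Rightarrow(2)\Rightarrow(3)$ together with $(3)\Rightarrow(2)$ and $(2)\Rightarrow(1)$, so that all three statements are equivalent. Two standard auxiliary facts will be used without proof: the Lebesgue differentiation theorem for Bochner integrals, which holds in an arbitrary Banach space because an $L^1$ strongly measurable function is almost separably valued and one may apply the scalar theorem to $t\mapsto\|g(t)-v_k\|$ for a countable dense set $\{v_k\}$ in its essential range; and the reduction of the Radon-Nikod\'ym property to the single model space $([0,1],\mathcal B,\lambda)$.

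For $(1)\Rightarrow(2)$, I would fix a locally absolutely continuous $f:\mathbb R\to V$ and a compact interval $[a,b]$. Then $f$ has bounded variation on $[a,b]$, its variation function $v(t)=\mathrm{Var}(f;[a,t])$ is nondecreasing and absolutely continuous, and $\nu([c,d]):=f(d)-f(c)$ extends to a countably additive $V$-valued measure on the Borel subsets of $[a,b]$ whose variation is the Lebesgue-Stieltjes measure $dv\ll\lambda$. Applying (1) gives a density $g\in L^1([a,b];V)$, so $f(t)=f(a)+\int_a^t g\,d\lambda$, and the Lebesgue differentiation theorem yields $f'=g$ a.e.\ on $[a,b]$. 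Since $[a,b]$ is arbitrary, $f$ is differentiable a.e.

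The implication $(2)\Rightarrow(3)$ is immediate since Lipschitz implies absolutely continuous. For $(3)\Rightarrow(2)$, I would reparametrize: given $f$ locally absolutely continuous and a compact interval $[a,b]$, set $s(t)=t+\mathrm{Var}(f;[a,t])$, a strictly increasing absolutely continuous bijection onto some $[0,L]$ with $s'\ge1$ a.e.; then $s^{-1}$ is $1$-Lipschitz and $g:=f\circ s^{-1}$ is $1$-Lipschitz because $\|g(s(d))-g(s(c))\|=\|f(d)-f(c)\|\le\mathrm{Var}(f;[c,d])\le s(d)-s(c)$. By (3), $g$ is differentiable off a null set $E\subset[0,L]$; since $s^{-1}$ is Lipschitz, $s^{-1}(E)$ is null, so for a.e.\ $t$ both $s$ is differentiable at $t$ and $g$ at $s(t)$, and the chain rule makes $f=g\circ s$ differentiable at $t$.

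Finally, for $(2)\Rightarrow(1)$, by the reduction theorem it suffices to find a density for any $\lambda$-continuous $V$-valued $\nu$ of bounded variation on $([0,1],\mathcal B,\lambda)$. The function $f(x):=\nu([0,x])$ is absolutely continuous because $\sum_i\|f(b_i)-f(a_i)\|\le|\nu|\bigl(\bigcup_i[a_i,b_i]\bigr)$ and $|\nu|\ll\lambda$; extend it to a locally absolutely continuous function on $\mathbb R$ and let $g$ be its a.e.\ derivative, which lies in $L^1([0,1];V)$ since $\|g\|$ is dominated by the (integrable) derivative of the variation function. Composing with $\varphi\in V^*$ reduces the fundamental theorem of calculus to the scalar case, giving $\varphi(f(x))=\varphi\bigl(\int_0^x g\,d\lambda\bigr)$ for every $\varphi$, hence $f(x)=\int_0^x g\,d\lambda$ (note $f(0)=\nu(\{0\})=0$); thus $\nu$ and $g\,d\lambda$ agree on intervals and therefore on $\mathcal B$. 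The hard part is exactly this step — more precisely the reduction of the Radon-Nikod\'ym property to Lebesgue measure (restrict to a finite measure, then to the separable nonatomic $\sigma$-algebra generated by the range of $\nu$, then use the isomorphism of its measure algebra with that of $[0,1]$) — together with checking that the vector-valued fundamental theorem of calculus survives, which is handled by the domination and the functional argument just described.
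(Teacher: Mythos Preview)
The paper does not prove this proposition at all: it is simply quoted, with attribution, from \cite[Theorem~2.5.12]{HVVW2016}. There is therefore no ``paper's own proof'' to compare against.

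That said, your plan is correct and is essentially the standard route one finds in the literature (e.g.\ Diestel--Uhl, or the cited Hyt\"onen--van Neerven--Veraar--Weis). The four implications are each handled by the expected device: $(1)\Rightarrow(2)$ via the vector Stieltjes measure and Lebesgue differentiation of the Bochner integral; $(2)\Rightarrow(3)$ trivially; $(3)\Rightarrow(2)$ by the arc-length reparametrization $s(t)=t+\mathrm{Var}(f;[a,t])$, which is the clean way to make the composition Lipschitz while keeping $s^{-1}$ Lipschitz so that null sets pull back to null sets; and $(2)\Rightarrow(1)$ by reducing to Lebesgue measure on $[0,1]$ and differentiating the distribution function $x\mapsto\nu([0,x])$. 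Your identification of the genuine content --- the reduction of the RNP to the single model space $([0,1],\lambda)$ and the vector fundamental theorem of calculus via duality --- is accurate; both are standard but nontrivial, and you have stated correctly why they go through.
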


Note that each reflexive space has the RNP, and so does every
separable dual. On the other hand there are spaces that do not have the RNP, such as $c_0$, $L^1([0,1])$. 
For more information on the RNP, see in \cite[Chapter 5]{BL2000}.

\section{Sobolev spaces based on Banach function spaces}
A function $v\in L^{1}_{loc}(\Omega; V)$ is said to be a weak partial derivative with respect to 
$j$th coordinate of the function  $u\in L^{1}_{loc}(\Omega; V)$ if
\[
\int_\Omega\frac{\partial\varphi}{\partial x_j}(x)u(x)\, dx = -\int_\Omega\varphi(x)v(x)\, dx
\]
for all $\varphi\in C^\infty_0(\Omega)$. In this case we denote $v=\partial_j u$.
The Sobolev space $W^1X(\Omega;V)$ is the space of all $u\in X(\Omega; V)$
whose weak derivatives exist and belong to $X(\Omega;V)$.
On $W^1X(\Omega;V)$ we define a norm 
\[
\|u\|_{W^1X} = \|u\|_{X(\Omega;V)} + \||\nabla u|\|_{X(\Omega)},
\]  
where 
$
|\nabla u| = \sqrt{\sum_{j=1}^{n}\|\partial_j u\|_V^2}.
$
In the case of real-valued functions we will use $W^1X(\Omega)$ instead of $W^1X(\Omega;\mathbb R)$.

If the norm $\|\cdot\|_X$ is absolutely continuous and has the translation inequality property,
then the Meyers-Serrin theorem holds true:
$C^{\infty}(\Omega; V)\cap W^1X(\Omega;V)$ is dense in $W^1X(\Omega;V)$ with respect to the norm $\|\cdot\|_{W^1X}$. 
In this case, Sobolev functions are approximated with the help of standard mollification technique \cite[Corollary 3.1.5]{F1995}. 

\textit{The Sobolev $X$-capacity} of a set $E\subset\Omega$ is defined as
\[
\Capp_{X}(E) = \inf\{\|u\|_{W^1X} \colon u \geq 1 \text{ on } E\}.
\]

\begin{thm}\label{theorem:qe}
Let $u_i \in C^{\infty}(\Omega)\cap W^1X(\Omega)$ and $\{u_i\}$ is a Cauchy sequence in $W^1X(\Omega)$.
Then there is a subsequence of $\{u_i\}$ that converges
pointwise in $\Omega$ except a set of $X$-capacity zero.
Moreover, the convergence is uniform outside a set of arbitrarily small $X$-capacity.
\end{thm}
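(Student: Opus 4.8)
The plan is to mimic the classical argument (as in Heinonen–Kinnunen–Martio-type capacity proofs) that a Cauchy sequence in $W^{1}X$ has a subsequence converging quasi-uniformly, with the Sobolev $X$-capacity playing the role of the usual Sobolev capacity. First I would pass to a subsequence, still denoted $\{u_i\}$, that converges rapidly: choose indices so that $\|u_{i+1}-u_i\|_{W^1X}\le 2^{-2i}$. Set $E_i=\{x\in\Omega:|u_{i+1}(x)-u_i(x)|>2^{-i}\}$; since $u_{i+1}-u_i\in C^\infty(\Omega)\cap W^1X(\Omega)$, the function $2^{i}|u_{i+1}-u_i|$ is an admissible competitor for $\Capp_X(E_i)$ (possibly after truncation, which does not increase the $W^1X$-norm because of the lattice property together with the chain rule for the modulus, a fact available once one knows Lipschitz maps act on $W^1X(\Omega;\mathbb R)$; if that is not yet established in the paper one simply uses $\min\{1,2^i|u_{i+1}-u_i|\}$ directly and notes its gradient is bounded by $2^i|\nabla(u_{i+1}-u_i)|$). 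Hence $\Capp_X(E_i)\le 2^{i}\|u_{i+1}-u_i\|_{W^1X}\le 2^{-i}$.

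Next I would exploit subadditivity of the capacity: from the definition, $\Capp_X\big(\bigcup_{i\ge j}E_i\big)\le \sum_{i\ge j}\Capp_X(E_i)\le 2^{-j+1}$ — subadditivity follows because given near-optimal admissible functions $v_i$ for $E_i$, the supremum $\sup_{i\ge j}v_i$ (or $\sum v_i$) is admissible for the union and its $W^1X$-norm is controlled by $\sum\|v_i\|_{W^1X}$, using the lattice property (P1) for the function part and $|\nabla \sup_i v_i|\le \sum_i|\nabla v_i|$ a.e. together with the triangle inequality and (P1) for the gradient part. Setting $F_j=\bigcup_{i\ge j}E_i$ and $F=\bigcap_j F_j$, we get $\Capp_X(F)\le\Capp_X(F_j)\to 0$, so $\Capp_X(F)=0$. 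Off $F_j$, i.e. for $x\notin F_j$, we have $|u_{i+1}(x)-u_i(x)|\le 2^{-i}$ for every $i\ge j$, so $\{u_i(x)\}$ is uniformly Cauchy there; thus $\{u_i\}$ converges uniformly on $\Omega\setminus F_j$, and converges pointwise on $\Omega\setminus F$. Since $\Capp_X(F_j)\le 2^{-j+1}$ is arbitrarily small, this simultaneously gives the pointwise convergence outside a set of capacity zero and the uniform convergence outside a set of arbitrarily small capacity.

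The one point that genuinely needs the Banach-function-space hypotheses (P1)–(P4) rather than just $L^p$ structure is the verification that the competitors built by taking suprema/sums of countably many admissible functions actually lie in $W^1X(\Omega;\mathbb R)$ with the claimed norm bound; I would handle this by a standard truncation-and-monotone-limit argument: work with finite partial suprema $w_N=\sup_{j\le i\le N}v_i$, which are Lipschitz combinations hence in $W^1X$, note $0\le w_N\nearrow w$ with $\|w_N\|_X$ bounded by $\sum\|v_i\|_X$, invoke the Fatou property (P2) (or Lemma~\ref{lemma:Fatou}) to conclude $w\in X(\Omega)$, and similarly pass to the limit in the gradient estimate $|\nabla w_N|\le\sum_{i\le N}|\nabla v_i|$ using Lemma~\ref{lemma:Fatou} again on $|\nabla w|$ versus $\liminf|\nabla w_N|$. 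This is the main technical obstacle; once it is in place the rest of the argument is the routine rapidly-convergent-subsequence scheme, and it does not use absolute continuity or the translation inequality at all.
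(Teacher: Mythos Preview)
Your proposal is correct and is precisely the approach the paper has in mind: the paper's entire proof of this theorem is the single sentence ``This can be proved analogously to the $L^p$ case,'' and what you have written is exactly that analogue, carried out in the Banach-function-space setting using (P1), (P2), and Lemma~\ref{lemma:Fatou}. One small remark: in your subadditivity step, the line ``use Lemma~\ref{lemma:Fatou} on $|\nabla w|$ versus $\liminf|\nabla w_N|$'' tacitly presupposes that $w$ already has a weak gradient; the cleanest way to close this is either to use the summable alternative $\sum_i v_i$ together with completeness of $W^1X$, or to pass to the limit via the ACL description (the uniform bound $|\nabla w_N|\le g:=\sum_i|\nabla v_i|\in X(\Omega)$ gives $|w(s)-w(t)|\le\int_s^t g$ on a.e.\ segment, whence $w\in W^1X$ by Lemma~\ref{lem:lemma2.12}).
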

\begin{proof}
This can be proved analogously to the $L^p$ case.
\end{proof}

\begin{thm}\label{theorem:W-ACL}
If $u\in W^1X(\Omega)$, then there is a representative $\tilde u$ which is absolutely continuous and differentiable almost everywhere on lines in $\Omega$.
Moreover, $\frac{\partial \tilde u}{\partial x_j} = \partial_j u$ a.e. 
\end{thm}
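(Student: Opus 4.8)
The plan is to reduce the vector-valued statement to the classical real-valued ACL characterization of Sobolev functions. First I would fix a coordinate direction, say $x_n$, and write $x = (x', x_n)$ with $x' \in \mathbb{R}^{n-1}$. Using the mollification/Meyers–Serrin machinery available under our standing hypotheses (or, more robustly, Theorem~\ref{theorem:qe}), I would take a sequence $u_i \in C^\infty(\Omega; V) \cap W^1X(\Omega; V)$ with $u_i \to u$ and $\nabla u_i \to \nabla u$ in $X(\Omega; V)$; passing to a subsequence I may assume $u_i \to u$ and $\partial_j u_i \to \partial_j u$ pointwise a.e. The property (P4) of a Banach function space gives a continuous embedding $X(A; V) \hookrightarrow L^1(A; V)$ on every set $A$ of finite measure, so on relatively compact subdomains $\{u_i\}$ and $\{\nabla u_i\}$ are also Cauchy in $L^1$. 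Applying the ordinary Fubini theorem to the $L^1$ bounds, for a.e. line $\ell_{x'} = \{(x', t) : (x',t) \in \Omega\}$ parallel to the $x_n$-axis the restrictions $u_i|_{\ell_{x'}}$ and $\partial_n u_i|_{\ell_{x'}}$ converge in $L^1(\ell_{x'}; V)$, and a further diagonal subsequence converges pointwise a.e. on such lines.

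Next I would show that on such a good line the limit function is absolutely continuous. On a segment $[a,b] \subset \ell_{x'}$ the fundamental theorem of calculus for the smooth functions gives, for Bochner integrals in $V$,
\[
u_i(x', t_2) - u_i(x', t_1) = \int_{t_1}^{t_2} \partial_n u_i(x', s)\, ds,
\]
and since $\partial_n u_i(x', \cdot) \to g_{x'}(\cdot) := \partial_n u(x', \cdot)$ in $L^1([a,b]; V)$ we may pass to the limit to obtain $u(x', t_2) - u(x', t_1) = \int_{t_1}^{t_2} g_{x'}(s)\, ds$ for a.e. $t_1, t_2$. Defining $\tilde u$ on $\ell_{x'}$ by this integral formula (choosing a fixed base point and using that $g_{x'} \in L^1$), we get a representative that is absolutely continuous on $\ell_{x'}$: absolute continuity of $t \mapsto \int_{t_0}^t g_{x'}$ follows from absolute continuity of the Lebesgue integral applied to $\|g_{x'}\|_V \in L^1$. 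The same argument run over the $n$ coordinate directions, together with a null-set bookkeeping argument to patch the directional representatives into a single globally defined $\tilde u$ with $\tilde u = u$ a.e., yields the $ACL$ representative.

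For the differentiability and the identification $\partial \tilde u / \partial x_j = \partial_j u$ a.e., I would argue direction by direction on the good lines: since $\tilde u(x', \cdot)$ is an indefinite Bochner integral of $g_{x'} \in L^1$, Lebesgue's differentiation theorem for $V$-valued $L^1$ functions (valid for \emph{every} Banach space $V$, as strongly measurable functions are essentially separably valued) gives that $\frac{d}{dt}\tilde u(x', t) = g_{x'}(t) = \partial_n u(x', t)$ for a.e. $t$; Fubini then promotes this to a.e. differentiability on $\Omega$ with the classical partial derivative equal to $\partial_n u$ a.e. Doing this for each $j$ completes the proof.

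The main obstacle, I expect, is the bookkeeping of null sets when assembling the separate directional representatives into one well-defined representative $\tilde u$ — one must check that the representative constructed from integration along $x_n$-lines agrees a.e. with the one from $x_1$-lines, etc., and that the exceptional set of "bad lines" in each direction is genuinely of measure zero in $\mathbb{R}^{n-1}$. A secondary subtlety is justifying that (P4) alone suffices to get $L^1$ convergence on relatively compact subdomains and hence the Fubini step, rather than needing global integrability; this is handled by exhausting $\Omega$ by sets of finite measure and taking a countable intersection of the resulting full-measure line sets.
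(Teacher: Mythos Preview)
Your proposal contains a genuine gap and also over-complicates the argument compared to the paper.

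First, note that the theorem concerns the \emph{real-valued} space $W^1X(\Omega)$, not $W^1X(\Omega;V)$; you have misread the statement as vector-valued. This in itself is harmless for the logic, but it signals that you have not absorbed the role of this theorem in the paper: it is a scalar lemma to be invoked later (e.g.\ in the proof of Lemma~\ref{lem:lemma2.13}) for functions of the form $\langle v^*, u\rangle$.

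More importantly, your argument rests on the existence of a sequence $u_i\in C^\infty(\Omega)\cap W^1X(\Omega)$ with $u_i\to u$ in $W^1X$. In this paper the Meyers--Serrin density statement is established only under the additional hypotheses that $\|\cdot\|_X$ is absolutely continuous and has the translation inequality property; Theorem~\ref{theorem:W-ACL} does not assume these, so you are not entitled to smooth approximations here. Invoking Theorem~\ref{theorem:qe} does not help, since that theorem \emph{starts} from a Cauchy sequence of smooth functions rather than producing one.

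The paper's own proof avoids this entirely: by property~(P4) one has $X(A)\hookrightarrow L^1(A)$ for every set $A$ of finite measure, hence $u$ and each $\partial_j u$ lie in $L^1_{\mathrm{loc}}(\Omega)$, i.e.\ $W^1X(\Omega)\subset W^{1,1}_{\mathrm{loc}}(\Omega)$. The classical ACL characterization of $W^{1,1}_{\mathrm{loc}}$ then gives the conclusion immediately. Your approach, even if the density step were available, essentially re-proves that classical characterization from scratch via Fubini and the fundamental theorem of calculus; the paper simply quotes it.
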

\begin{proof}
This follows from the fact that $W^1X(\Omega)\subset W^{1,1}_{loc}(\Omega)$.
\end{proof}

\subsection{Reshetnyak--Sobolev space}
In this subsection, we develop the ideas that we learned from \cite[Section 2]{HT2008} by Haj{\l}asz and Tyson.
We weaken their assumption of the separable conjugate of $V$ to the
Radon-Nikod\'ym property. 

The \textit{Reshetnyak--Sobolev space} $R^1X(\Omega;V)$ is the class of all functions $u\in X(\Omega;V)$ such that:
\begin{enumerate}[(A)]
\item for every $v^*\in V^*$, $\|v^*\|\leq 1$, we have $\langle v^*, u \rangle \in W^1X(\Omega)$;
\item there is a non-negative function $g\in X(\Omega)$ such that
\begin{equation}\label{eq:resh}
|\nabla\langle v^*, u \rangle| \leq g \quad \text{a.e. on } \Omega
\end{equation} 
for every $v^*\in V^*$ with $\|v^*\|\leq 1$.
\end{enumerate}   
A function g satisfying condition (B) above is called a \textit{Reshetnyak upper gradient} of $u$.
The norm in $R^1X(\Omega;V)$ is defined via
\[
\|f\|_{R^1X} = \|f\|_{X(\Omega;V)} + \inf\|g\|_{X(\Omega)},
\]
where the infimum is taken over all Reshetnyak upper gradients of $u$.

The form of the definition above is given by Yu. G. Reshetnyak (\cite[p. 573]{Reshetnyak97} for functions valued in a metric space); 
for functions valued in a Banach space, we refer to \cite{HKST01} and \cite{HT2008}.

In the next lemma, which is a modification of  \cite[Lemma 2.12]{HT2008}, we provide sufficient conditions for function $u$ to be in $W^1X(\Omega;V)$.
\begin{lem}\label{lem:lemma2.12}
Let $V$ be a Banach space enjoying the Radon-Nikod\'ym property.
Suppose function $u\in X(\Omega; V)$ is so that for every $j\in\{1,\dots, n\}$ 
 it has a representative $\tilde u$ which is absolutely continuous on almost every compact line segment in $\Omega$ parallel to $x_j$-axis
and partial derivatives exist and satisfy $\big\|\frac{\partial \tilde u}{\partial x_j}\big\|_V \leq g$ a.e. for some 
$g\in X(\Omega)$.
Then $u\in W^1X(\Omega;V)$ and 
$\|u\|_{W^1X}\leq \|u\|_{X(\Omega;V)} +  \sqrt{n}\|g\|_{X(\Omega)}$.
\end{lem}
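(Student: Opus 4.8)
The plan is to show that the hypotheses force the classical partial derivatives $\frac{\partial \tilde u}{\partial x_j}$ to be weak derivatives of $u$ in the Bochner sense, and then to check the $X$-norm bounds. First I would fix $j$ and work on almost every line $\ell$ parallel to the $x_j$-axis on which $\tilde u$ is absolutely continuous. Since $V$ has the Radon–Nikod\'ym property, Proposition~\ref{prop:RNP} tells us that $\tilde u$ restricted to such a line is differentiable a.e.\ with derivative $\frac{\partial \tilde u}{\partial x_j}$, and — this is the key analytic fact I would invoke — an absolutely continuous $V$-valued function on an interval equals the Bochner integral of its a.e.\ derivative (the fundamental theorem of calculus for the Bochner integral, valid precisely because of the RNP). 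Hence on each good line the one-dimensional integration-by-parts formula $\int \varphi' \,\tilde u \, dt = -\int \varphi \, \frac{\partial \tilde u}{\partial x_j}\, dt$ holds for every $\varphi\in C_0^\infty$ supported on that line segment.

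Next I would integrate this one-dimensional identity over the remaining $n-1$ variables using Fubini's theorem for the Bochner integral. To apply Fubini I need integrability: $u\in X(\Omega;V)\subset L^1_{\mathrm{loc}}(\Omega;V)$ by property (P4), and $\frac{\partial \tilde u}{\partial x_j}$ is dominated a.e.\ in $V$-norm by $g\in X(\Omega)\subset L^1_{\mathrm{loc}}(\Omega)$, again by (P4), so $\frac{\partial \tilde u}{\partial x_j}\in L^1_{\mathrm{loc}}(\Omega;V)$; in particular it is strongly measurable (as an a.e.\ pointwise limit of difference quotients of the strongly measurable $\tilde u$). Fubini then upgrades the line-by-line identity to $\int_\Omega \frac{\partial \varphi}{\partial x_j} u \, dx = -\int_\Omega \varphi \, \frac{\partial \tilde u}{\partial x_j}\, dx$ for all $\varphi\in C_0^\infty(\Omega)$, which is exactly the statement that $\partial_j u = \frac{\partial \tilde u}{\partial x_j}$ in the weak sense. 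Since $\big\|\frac{\partial \tilde u}{\partial x_j}\big\|_V\le g\in X(\Omega)$, the lattice property (P1) gives $\partial_j u\in X(\Omega;V)$, so $u\in W^1X(\Omega;V)$.

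Finally, for the norm estimate I would note that pointwise a.e.
\[
|\nabla u| = \sqrt{\sum_{j=1}^n \|\partial_j u\|_V^2} \le \sqrt{\sum_{j=1}^n g^2} = \sqrt{n}\, g,
\]
and then apply (P1) to conclude $\||\nabla u|\|_{X(\Omega)} \le \sqrt{n}\,\|g\|_{X(\Omega)}$, whence $\|u\|_{W^1X} = \|u\|_{X(\Omega;V)} + \||\nabla u|\|_{X(\Omega)} \le \|u\|_{X(\Omega;V)} + \sqrt{n}\,\|g\|_{X(\Omega)}$.

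The main obstacle — really the only nonroutine point — is the passage from "absolutely continuous on a.e.\ line" to "$\tilde u$ is the Bochner integral of its line derivative," i.e.\ the fundamental theorem of calculus in the vector-valued setting; this is exactly where the Radon–Nikod\'ym property is indispensable and cannot be dropped. Everything else (measurability of the derivative, Fubini, the $X$-space bounds via (P1) and (P4)) is bookkeeping, but one should be slightly careful that the exceptional set of bad lines, together with the fact that for $\varphi\in C_0^\infty(\Omega)$ the integrals localize to a compact piece of $\Omega$ where (P4) provides $L^1$-control, are handled cleanly when invoking Fubini.
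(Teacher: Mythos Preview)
Your proposal is correct and follows essentially the same route as the paper: use the Radon--Nikod\'ym property to obtain a.e.\ differentiability (and the fundamental theorem of calculus) on almost every line parallel to $x_j$, perform the one-dimensional integration by parts there, and then apply Fubini to recover the weak-derivative identity on $\Omega$. Your write-up is in fact more explicit than the paper's in justifying the $L^1_{\mathrm{loc}}$ integrability via (P4), the strong measurability of $\frac{\partial\tilde u}{\partial x_j}$, and the final norm bound $|\nabla u|\le\sqrt{n}\,g$, all of which the paper leaves implicit.
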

\begin{proof}
Fix $j\in\{1,\dots, n\}$.
Due to the RNP, partial derivative $\frac{\partial \tilde u}{\partial x_j}$ exists 
on almost every compact line segment in $\Omega$ parallel to the coordinate axes (proposition \ref{prop:RNP}). 
Let $\Gamma$  be a collection of all segments in $\Omega$ parallel to the $x_j$-axis on which function $\tilde u$ fails to be absolutely continuous.
Denote $\Sigma = P_j\Gamma$ -- the projection of $\Gamma$ on subspace orthogonal to the $x_j$-axis, then $\mu^{n-1}(\Sigma)=0$. 
Now, with the help of the Fubini theorem, for any $\varphi\in C^\infty_0(\Omega)$ we have
\begin{multline*}
\int_{\Omega}u\frac{\partial\varphi}{\partial x_j}\, dx = \int_{\Omega}\tilde u\frac{\partial\varphi}{\partial x_j}\, dx 
= \int_{P_j\Omega} \int_{l_j(y)\cap\Omega}\tilde u\frac{\partial\varphi}{\partial x_j}\, ds\, dy\\
= \int_{P_j\Omega\setminus\Sigma} \int_{l_j(y)\cap\Omega}\tilde u\frac{\partial\varphi}{\partial x_j}\, ds\, dy
= \int_{P_j\Omega\setminus\Sigma} \int_{l_j(y)\cap\Omega}\frac{\partial\tilde u}{\partial x_j}\varphi\, ds\, dy
= \int_{\Omega}\frac{\partial\tilde u}{\partial x_j}\varphi\, dx,
\end{multline*}
where $l_j(y)$ is a line parallel to the $x_j$-axis and passing through $y\in P_j\Omega$. 
Therefore, $u$ have weak partial derivatives which are in $X(\Omega; V)$.
\end{proof}


\begin{lem}\label{lem:lemma2.13}
Let $u\in R^1X(\Omega;V)$. 
Then for each $j\in\{1,\dots, n\}$ there is a representative $\tilde u$ which 
is absolutely continuous on almost every compact line segment in $\Omega$ parallel to $x_j$-axis.
Moreover, the following limit exists and satisfies
\begin{equation}\label{eq:lemma2.13est}
\lim_{h\to 0}\frac{\|\tilde u(x+he_j)- \tilde u(x)\|_{V}}{h} \leq g(x) \quad \text{ for a.e. } x\in\Omega,
\end{equation} 
where $g\in X(\Omega)$ is a Reshetnyak upper gradient of $u$.
\end{lem}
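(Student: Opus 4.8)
\emph{Proof proposal.} The plan is to follow the scheme of \cite[Section 2]{HT2008}: reconstruct a good representative of $u$ from a countable norming family of functionals, and then invoke the Radon--Nikod\'ym property to obtain almost everywhere differentiability along the coordinate lines. First I would reduce to the separable setting. Since $u$ is an a.e.\ limit of simple functions, its essential range is contained in a closed separable subspace $V_0\subseteq V$, i.e.\ $u(x)\in V_0$ for a.e.\ $x\in\Omega$. Taking a countable dense subset $\{w_m\}$ of the unit sphere of $V_0$ and choosing, by Hahn--Banach, functionals $v_m^*\in V^*$ with $\|v_m^*\|=1$ and $\langle v_m^*,w_m\rangle=1$, one gets $\|v\|_V=\sup_m|\langle v_m^*,v\rangle|$ for every $v\in V_0$. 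By condition (A), each $\langle v_m^*,u\rangle$ lies in $W^1X(\Omega)$, so by Theorem~\ref{theorem:W-ACL} it has a representative $f_m$ that is absolutely continuous on a.e.\ line and satisfies $\frac{\partial f_m}{\partial x_j}=\partial_j\langle v_m^*,u\rangle$ a.e.; by condition (B), $\bigl|\frac{\partial f_m}{\partial x_j}\bigr|\le|\nabla\langle v_m^*,u\rangle|\le g$ a.e., where $g$ is a fixed Reshetnyak upper gradient of $u$.

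Next comes a Fubini bookkeeping step. Properties (P3)--(P4) give $X(\Omega)\subset L^1_{loc}(\Omega)$, hence $g\in L^1_{loc}(\Omega)$; discarding a family of lines parallel to the $x_j$-axis whose projection $P_j$ is $\mu^{n-1}$-null, I may fix a line $\ell$ so that, simultaneously for all $m$: $f_m|_\ell$ is absolutely continuous; $f_m=\langle v_m^*,u\rangle$ a.e.\ on $\ell$; $\bigl|\frac{\partial f_m}{\partial x_j}\bigr|\le g$ a.e.\ on $\ell$; $g|_\ell$ is locally integrable on $\ell$; and $u(x)\in V_0$ for a.e.\ $x\in\ell$. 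Let $E\subset\ell$ be the null set off which all these pointwise relations hold, and set $G(r)=\int_a^r g(\tau)\,d\tau$ along $\ell$, an absolutely continuous function. For $s,t\in\ell\setminus E$ the fundamental theorem of calculus for absolutely continuous functions gives $|\langle v_m^*,u(s)-u(t)\rangle|=|f_m(s)-f_m(t)|=\bigl|\int_t^s\frac{\partial f_m}{\partial x_j}(r)\,dr\bigr|\le|G(s)-G(t)|$; taking the supremum over $m$ and using the norming identity for $u(s)-u(t)\in V_0$ yields $\|u(s)-u(t)\|_V\le|G(s)-G(t)|$. Hence $u|_{\ell\setminus E}$ is uniformly continuous and extends uniquely to a function $\tilde u_\ell$ on $\ell$ with $\|\tilde u_\ell(s)-\tilde u_\ell(t)\|_V\le|G(s)-G(t)|$ for all $s,t\in\ell$, so $\tilde u_\ell$ is absolutely continuous. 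Putting $\tilde u(x)=\tilde u_{\ell(x)}(x)$ on these good lines and $\tilde u=u$ elsewhere defines, after one more application of Fubini's theorem, a map with $\tilde u=u$ a.e.\ on $\Omega$; since Lebesgue measure is complete, $\tilde u$ is strongly measurable, so $\tilde u\in X(\Omega;V)$ is a representative of $u$ that is absolutely continuous on a.e.\ segment parallel to the $x_j$-axis.

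For the estimate \eqref{eq:lemma2.13est}, fix a good line $\ell$. The map $\tilde u_\ell:\ell\to V$ is absolutely continuous and $V$ has the Radon--Nikod\'ym property, so by Proposition~\ref{prop:RNP} it is differentiable at a.e.\ point of $\ell$. At a point $s$ of differentiability that is also a Lebesgue point of $g|_\ell$,
\[
\lim_{h\to 0}\frac{\|\tilde u_\ell(s+h)-\tilde u_\ell(s)\|_V}{|h|}=\|\tilde u_\ell'(s)\|_V\le\lim_{h\to 0}\frac{|G(s+h)-G(s)|}{|h|}=g(s).
\]
Since a.e.\ point of a.e.\ line parallel to the $x_j$-axis has both properties, Fubini's theorem upgrades this to the pointwise bound \eqref{eq:lemma2.13est} for a.e.\ $x\in\Omega$. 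The delicate point is the reconstruction step: one must ensure the countable family $\{v_m^*\}$ recovers $\|u(s)-u(t)\|_V$ \emph{exactly} on a co-null subset of each line (which is where essential separability of $u$ enters) and that the line-by-line extensions patch together into a strongly measurable map; once that is secured, the rest is routine use of Fubini's theorem, the fundamental theorem of calculus for absolutely continuous functions, and the Radon--Nikod\'ym property.
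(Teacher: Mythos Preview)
Your construction of the representative $\tilde u$ follows the same route as the paper: essential separability via Pettis, a countable norming family of functionals, ACL representatives $f_m$ for the scalar functions $\langle v_m^*,u\rangle$ from Theorem~\ref{theorem:W-ACL}, Fubini bookkeeping, and then the integral bound $\|\tilde u(s)-\tilde u(t)\|_V\le |G(s)-G(t)|$ along good lines. Your norming step is in fact a bit cleaner than the paper's: you pick functionals norming all of $V_0$, so a single supremum recovers $\|u(s)-u(t)\|_V$, whereas the paper takes a dense set in the \emph{difference set} $u(\Omega\setminus\Sigma_0)-u(\Omega\setminus\Sigma_0)$, Hahn--Banach functionals attaining the norm there, and a limsup along subsequences.

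There is, however, one genuine gap in your final step. You invoke the Radon--Nikod\'ym property of $V$ to get differentiability of $\tilde u_\ell$ and then write $\lim_{h\to 0}\|\tilde u_\ell(s+h)-\tilde u_\ell(s)\|_V/|h|=\|\tilde u_\ell'(s)\|_V$. But the lemma does \emph{not} assume that $V$ has the RNP; that hypothesis enters only later, in Theorem~\ref{thm:W=R}(2), where it is combined with the present lemma. So the identity you use is unavailable here. The paper obtains \eqref{eq:lemma2.13est} directly from the line estimate: your inequality $\|\tilde u_\ell(s+h)-\tilde u_\ell(s)\|_V\le |G(s+h)-G(s)|$ already gives the $\limsup$ bound at every Lebesgue point of $g|_\ell$, which is all that is used downstream. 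If you insist on the existence of the \emph{limit} as stated, do not use RNP; appeal instead to the general fact that the metric derivative $\lim_{h\to 0}\|\gamma(s+h)-\gamma(s)\|_V/|h|$ of an absolutely continuous curve into a metric space exists at a.e.\ $s$, which needs no linear or RNP structure on the target.
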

\begin{proof}
The function $u\in R^1X(\Omega;V)$ is measurable; therefore, by the Pettis theorem, it is essentially separable valued.
In other words, there is a subset $Z\subset\Omega$ of measure zero so that $u(\Omega\setminus \Sigma_0)$ is separable in $V$.
Let $\{v_i\}_{i\in\mathbb N}$ be a dense subset in the difference set
\[
f(\Omega\setminus \Sigma_0) - f(\Omega\setminus \Sigma_0) = \{f(x) - f(y) \colon x,y\in\Omega\setminus \Sigma_0\},
\] 
and let $v_i^*\in V^*$, $\|v^*_i\|=1$, be such that $\|v_i\| = \langle v^*_i, v_i \rangle$ (the last is due to the the Hahn--Banach theorem, see \cite[p.17]{HKST2015}). 

For each $i\in\mathbb N$ there is a representative $u_i\in ACL(\Omega)$ of $\langle v^*_i, u \rangle \in W^1X(\Omega)$ (theorem \ref{theorem:W-ACL}),
and an inequality $|\nabla u_i|\leq g$ holds true.
Let $\Sigma_i\subset\Omega$ be a set of measure zero, where $u_i$ differs from $\langle v^*_i, u \rangle$.
Denote $\Sigma = \Sigma_0\cup\bigcup_i\Sigma_i$. 

Fix $j\in\{1,\dots,n\}$.
Then for almost all compact line segment $l:[a,b]\to\Omega$ of the form $l(\tau)=x_0+\tau e_j$ we have:
\begin{enumerate}[(a)]
\item  $g$ is integrable on $l$;
\item  $\mu^1(l\cap\Sigma) = 0$;
\item For each $i\in\mathbb N$ and every $a\leq s\leq t\leq b$
\begin{equation}\label{eq:lemma2.13-condition-c}
|u_i(x_0+te_j) - u_i(x_0+se_j)| \leq \int_s^tg(x_0+\tau e_j)\, d\tau.
\end{equation}
\end{enumerate}    
The Fubini theorem ensures (a) and (b), while (c) follows from the estimate $|\nabla u_i|\leq g$.
Let $l$ be a segment so that (a)-(c) hold true. 
If $x_0+se_j\not\in\Sigma$ and $x_0+te_j\not\in\Sigma$, then 
there is a sequence $v_{i_k}$ converging to $u(x_0+te_j) - u(x_0+se_j)$ in $V$.
It can be shown that in this case 
\[
\|u(x_0+te_j) - u(x_0+se_j)\|_V \leq \limsup_{k\to\infty}|u_{i_k}(x_0+te_j) - u_{i_k}(x_0+se_j)|.
\]
The last estimate together with  \eqref{eq:lemma2.13-condition-c} give us
\begin{equation}\label{eq:lemma2.13-l-Sigma}
\|u(x_0+te_j) - u(x_0+se_j)\|_V \leq \int_s^tg(x_0+\tau e_j)\, d\tau.
\end{equation}
If any of endpoints are in $\Sigma$, say $x_0+se_j\in\Sigma$, then we can choose a sequence $s_k\to s$ so that $x_0+s_ke_j\in l\setminus \Sigma$.
With the help of \eqref{eq:lemma2.13-l-Sigma}, it is easy to see that $u(x_0+s_ke_j)$ converges in $V$ and the limit does not depend on chose of sequence. 
This allows us to define the desired representative $\tilde u(x) = u(x)$ if $x\in\Omega\setminus Z$;
$\tilde u(x) = \lim\limits_{s_k\to 0} u(x+s_ke_j)$ if there is a segment with $x$ as its endpoint; 
and we put $\tilde u(x) = 0$ in other cases.
It easy to see that \eqref{eq:lemma2.13-l-Sigma} holds true for $\tilde u$, and almost every compact line segment in $\Omega$ parallel to $x_j$-axis. 	  
Estimate  \eqref{eq:lemma2.13est} follows immediately.
\end{proof}
We should note that in the lemma above the constructed representatives $\tilde u$ does not necessarily belong to $ACL(\Omega)$, 
but this does not affect our results.
However, it is possible to prove stronger property: there is a representative that is absolutely continuous on almost every rectifiable curve $\gamma$ in $\Omega$,
see \cite[Theorem 4.5]{CJPA2020} and the proof of \cite[Theorem 7.1.20]{HKST2015}. 

\begin{thm}\label{thm:W=R}
Let $\Omega\subset\mathbb R^n$ be open. 
\begin{enumerate}[1)]
\item If $u\in W^1X(\Omega;V)$, then $u\in R^1X(\Omega;V)$.
Moreover, $|\nabla u|$ is a Reshetnyak upper
gradient of $u$ and $\|u\|_{R^1X}\leq \|u\|_{W^1X}$.
\item If $V$ has the Radon-Nikod\'ym property and $u\in R^1X(\Omega;V)$, then $u\in W^1X(\Omega;V)$ and  $\|u\|_{W^1X}\leq \sqrt{n}\|u\|_{R^1X}$.
\end{enumerate} 
\end{thm}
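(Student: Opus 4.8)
The plan is to treat the two inclusions separately, since part~1) requires nothing about $V$ while part~2) rests on the two preceding lemmas.

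For part~1), I fix $u\in W^1X(\Omega;V)$ and $v^*\in V^*$ with $\|v^*\|\le 1$. The key computation is that a bounded linear functional commutes with the Bochner integral, so applying $v^*$ to the defining identity of $\partial_j u$ and pulling it under the integral sign shows that $\langle v^*,u\rangle$ has weak partial derivatives with $\partial_j\langle v^*,u\rangle=\langle v^*,\partial_j u\rangle$ a.e. From $|\langle v^*,\partial_j u\rangle|\le\|\partial_j u\|_V$ one gets $|\nabla\langle v^*,u\rangle|\le|\nabla u|$ pointwise a.e., and since $|\nabla u|\in X(\Omega)$ and $|\langle v^*,u\rangle|\le\|u\|_V\in X(\Omega)$, the lattice property (P1) puts $\langle v^*,u\rangle$ in $W^1X(\Omega)$. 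So conditions (A) and (B) hold with the single function $g=|\nabla u|$, independent of $v^*$, which gives $u\in R^1X(\Omega;V)$ and, taking the infimum in the definition of $\|\cdot\|_{R^1X}$, the estimate $\|u\|_{R^1X}\le\|u\|_{W^1X}$.

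For part~2), I suppose $V$ has the RNP, take $u\in R^1X(\Omega;V)$, and let $g$ be a Reshetnyak upper gradient of $u$. Fix $j$. Lemma~\ref{lem:lemma2.13} supplies a representative $\tilde u$ that is absolutely continuous on almost every segment parallel to the $x_j$-axis, with $\lim_{h\to0}\|\tilde u(x+he_j)-\tilde u(x)\|_V/h\le g(x)$ a.e. On such a segment the curve $t\mapsto\tilde u(x+te_j)$ is absolutely continuous, hence by Proposition~\ref{prop:RNP} differentiable for a.e.\ $t$, so $\partial\tilde u/\partial x_j$ exists a.e.\ in $\Omega$; continuity of the norm then upgrades the difference-quotient bound to $\|\partial\tilde u/\partial x_j\|_V\le g$ a.e. This is exactly the hypothesis of Lemma~\ref{lem:lemma2.12}, applied with the same $g$ for every $j$, which yields $u\in W^1X(\Omega;V)$ with $\|u\|_{W^1X}\le\|u\|_{X(\Omega;V)}+\sqrt n\,\|g\|_X$; taking the infimum over upper gradients $g$ and using $\sqrt n\ge1$ gives $\|u\|_{W^1X}\le\sqrt n\,\|u\|_{R^1X}$.

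I expect the genuinely delicate point to be the transition inside part~2) from the scalar difference-quotient estimate of Lemma~\ref{lem:lemma2.13} to the vector-valued pointwise derivative estimate demanded by Lemma~\ref{lem:lemma2.12}: this is precisely where the Radon--Nikod\'ym property is essential, since it is what forces the absolutely continuous curves $t\mapsto\tilde u(x+te_j)$ to be differentiable a.e. One must also check that the $j$-dependence of the representatives produced by Lemma~\ref{lem:lemma2.13} causes no trouble; it does not, because Lemma~\ref{lem:lemma2.12} establishes each weak partial derivative separately. Everything else — commuting functionals with Bochner integrals, the lattice property, continuity of the norm — is routine.
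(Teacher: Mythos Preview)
Your proof is correct and follows essentially the same approach as the paper's own proof: part~1) via commuting $v^*$ with the Bochner integral to get $\partial_j\langle v^*,u\rangle=\langle v^*,\partial_j u\rangle$ and the pointwise bound by $|\nabla u|$, and part~2) by invoking Lemma~\ref{lem:lemma2.13} together with the RNP to verify the hypotheses of Lemma~\ref{lem:lemma2.12}. Your write-up is in fact more detailed than the paper's, which compresses part~2) into a single sentence; your remarks on where the RNP enters and on the $j$-dependence of the representatives are accurate.
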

\begin{proof}
1) Let $u\in W^1X(\Omega;V)$, and $v^*\in V^*$ with $\|v^*\|\leq 1$. 
Then $\langle v^*, u\rangle \in X(\Omega)$ since $|\langle v^*, u\rangle| \leq \|u\|_V$. 
Using the property of the Bochner integral that $\int \langle v^*, u\rangle = \langle v^*, \int u\rangle$,
it is easy to show that $\langle v^*, u\rangle$ has weak derivatives in $X(\Omega)$,
and $\partial_j \langle v^*, u\rangle = \langle v^*, \partial_j u\rangle$.
Moreover, $|\partial_j\langle v^*, u\rangle| \leq \|\partial_j u\|_V\leq |\nabla u|$ a.e. on $\Omega$.

2)  Let $u\in R^1X(\Omega;V)$. 
Then lemma \ref{lem:lemma2.13} and the RNP of $V$ imply the assumptions of lemma \ref{lem:lemma2.12}.
Thus, $u\in W^1X(\Omega;V)$.
\end{proof}

\begin{thm}\label{thm:RNP}
A Banach space $V$ has the Radon-Nikod\'ym property if and only if $R^1X(\Omega;V) = W^1X(\Omega;V)$.
\end{thm}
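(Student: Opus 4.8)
The plan is to deduce the statement from Theorem \ref{thm:W=R} together with the characterisation of the Radon--Nikod\'ym property recorded in Proposition \ref{prop:RNP}. The ``if'' direction is immediate: if $V$ has the RNP, then parts 1) and 2) of Theorem \ref{thm:W=R} give both inclusions $W^1X(\Omega;V)\subset R^1X(\Omega;V)$ and $R^1X(\Omega;V)\subset W^1X(\Omega;V)$, hence $R^1X(\Omega;V)=W^1X(\Omega;V)$ with equivalent norms. So the whole content lies in the converse, which I would prove by contraposition: assuming $V$ does not have the RNP, I will construct a function belonging to $R^1X(\Omega;V)$ but not to $W^1X(\Omega;V)$.

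To build it, first invoke Proposition \ref{prop:RNP}(3): since $V$ fails the RNP there is a locally Lipschitz $f\colon\mathbb R\to V$ that is not differentiable on a set of positive measure. Restricting to a compact interval carrying that set and extending by constants, and then performing an affine change of the variable, I may assume $f\colon\mathbb R\to V$ is globally Lipschitz, bounded, and non-differentiable on a set $E$ of positive measure contained in the $x_1$-projection of some cube $Q'\subset\subset\Omega$. Fix $\psi\in C_0^\infty(\Omega)$ with $\psi\equiv 1$ on $Q'$ and put $u(x)=\psi(x)f(x_1)$, extended by $0$ outside $\operatorname{supp}\psi$. Then $u$ is strongly measurable, bounded, supported on a set of finite measure, so $u\in X(\Omega;V)$ by (P1) and (P3). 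For every $v^*\in V^*$ with $\|v^*\|\le 1$ the scalar map $t\mapsto\langle v^*,f(t)\rangle$ is Lipschitz with constant $\le \operatorname{Lip}(f)$, so $\langle v^*,u\rangle=\psi(x)\langle v^*,f(x_1)\rangle$ is a compactly supported $W^{1,\infty}$ function, whence $\langle v^*,u\rangle\in W^1X(\Omega)$ and $|\nabla\langle v^*,u\rangle|\le \bigl(\|\nabla\psi\|_\infty\,\sup_{\operatorname{supp}\psi}\|f\|_V+\operatorname{Lip}(f)\bigr)\chi_{\operatorname{supp}\psi}=:g$, a fixed element of $X(\Omega)$ independent of $v^*$. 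Thus $g$ is a Reshetnyak upper gradient of $u$ and $u\in R^1X(\Omega;V)$.

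It remains to rule out $u\in W^1X(\Omega;V)$. If it held, then restricting to $Q'$ and using (P4) with $|Q'|<\infty$ gives $u|_{Q'}\in W^{1,1}(Q';V)$, where $u(x)=f(x_1)$. Testing the defining identity of the weak $x_1$-derivative against products $\varphi(x)=\alpha(x_1)\beta(x')$ with $\int\beta\,dx'=1$ and applying Fubini's theorem, one finds that $f$, as a map of the single variable on the projection interval $I$ of $Q'$, lies in the one-dimensional Bochner--Sobolev space $W^{1,1}(I;V)$ with weak derivative $\phi(t)=\int\partial_1u(t,x')\beta(x')\,dx'$ (independent of $\beta$ by uniqueness of weak derivatives). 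By the one-dimensional fundamental theorem of calculus for $W^{1,1}(I;V)$ — the difference of $f$ and the indefinite integral $t\mapsto\int_{t_0}^t\phi$ has vanishing weak derivative, hence is a.e.\ constant — $f$ coincides a.e.\ with an indefinite Bochner integral, and indefinite Bochner integrals are differentiable a.e.\ by the Lebesgue differentiation theorem, which is valid in every Banach space. As $f$ is continuous, this forces $f$ to be differentiable a.e.\ on $I\supset E$, contradicting $|E|>0$. Hence $u\in R^1X(\Omega;V)\setminus W^1X(\Omega;V)$, so $R^1X(\Omega;V)\neq W^1X(\Omega;V)$, which proves the contrapositive.

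I expect the only genuinely delicate point to be the passage from the $n$-dimensional weak derivative of $u$ on $Q'$ to the one-dimensional Sobolev regularity of $f$: making the Fubini/test-function reduction rigorous and invoking the standard vector-valued facts that $W^1X(\Omega;V)\subset W^{1,1}_{\mathrm{loc}}(\Omega;V)$, the one-dimensional FTC for Bochner--Sobolev functions, and the Bochner-valued Lebesgue differentiation theorem. Everything else is bookkeeping, and the cutoff $\psi$ is introduced only so that the construction works for an arbitrary, possibly unbounded, open $\Omega$.
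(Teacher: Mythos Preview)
Your proof is correct and follows essentially the same route as the paper: both directions rest on Theorem~\ref{thm:W=R}, and for the converse both arguments test the inclusion against the function $x\mapsto f(x_1)$ built from a Lipschitz $f:I\to V$, using that membership in $W^1X\subset W^{1,1}_{\mathrm{loc}}$ forces a.e.\ differentiability of $f$. The only cosmetic differences are that the paper argues directly (assume $R^1X\subset W^1X$ and deduce that every Lipschitz $f$ is a.e.\ differentiable) while you argue by contraposition, and that you spell out the cutoff $\psi$ and the Fubini reduction where the paper simply says ``we may assume $\Omega=I$''.
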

\begin{proof}
Due to theorem \ref{thm:W=R}, it remains to prove that $V$ has the RNP in the case 
$R^1X(\Omega;V)\subset W^1X(\Omega;V)$.
Let $f:I\to V$ be Lipschitz continuous, where $I$ is a bounded interval.
We may assume that $\Omega = I$ since we can embed $I^{d}$ into $\Omega$ and
treat the function $x\mapsto f(x_1)$.
For any $v^*\in V^*$ with $\|v^*\|\leq 1$ function $\langle v^*, f\rangle:I\to\mathbb R$ is
Lipschitz continuous with the same Lipschitz constant $L$ and its derivative 
$|\langle v^*, f\rangle'|\leq L$. 
As constant function $x\mapsto L$ belongs to $X(I)$, by lemma \ref{lem:lemma2.12} $\langle v^*, f\rangle \in W^1X(I)$.	
Thus, conditions (A) and (B) are fulfilled; therefore,  $f\in R^1X(I;V)$, 
by the assumption $f\in W^1X(I,V)\subset W^{1,1}_{loc}(I; V)$.
From the last fact, we obtain that the derivative $f'$ exist almost everywhere on $I$.
\end{proof}

There are other definitions of Reshetnyak--Sobolev space.
\begin{thm}
Let $u : \Omega\to V$ be a measurable function. Then the
following four conditions are equivalent:
\begin{enumerate}[(i)]
\item $u\in R^1X(\Omega;V)$.
\item There exists a non-negative function $\rho\in X(\Omega)$ with the
following property: for each $1$-Lipschitz function $\varphi:V\to\mathbb R$ function $\varphi\circ u \in W^1X(\Omega)$ and $|\nabla \varphi\circ u|\leq \rho$ a.e. on $\Omega$. 
\item There exists a non-negative function $\rho\in X(\Omega)$ with the
following property: for each $v\in u(\Omega)$ function $u_v(x) = \|u(x)- v\|_V$ belongs to $W^1X(\Omega)$ and $|\nabla u_v|\leq \rho$ a.e. on $\Omega$. 
\end{enumerate}
\end{thm}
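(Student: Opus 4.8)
The plan is to establish the cyclic chain of implications (i) $\Rightarrow$ (ii) $\Rightarrow$ (iii) $\Rightarrow$ (i). The implication (ii) $\Rightarrow$ (iii) is essentially free: for each fixed $v\in V$ the function $\varphi_v(w)=\|w-v\|_V$ is $1$-Lipschitz from $V$ to $\mathbb R$, so applying (ii) to $\varphi=\varphi_v$ gives $u_v=\varphi_v\circ u\in W^1X(\Omega)$ with $|\nabla u_v|\le\rho$ a.e., with the same $\rho$ serving for all $v$. Thus the substance lies in (i) $\Rightarrow$ (ii) and (iii) $\Rightarrow$ (i). A minor point to dispatch along the way is the membership $\varphi\circ u,\ u_v\in X(\Omega)$, which is immediate when $|\Omega|<\infty$ from (P3) and the lattice property via $|\varphi\circ u|\le\|u(\cdot)\|_V+(\|v_0\|+|\varphi(v_0)|)$ for a fixed $v_0\in V$.

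For (i) $\Rightarrow$ (ii), assume $u\in R^1X(\Omega;V)$ with Reshetnyak upper gradient $g\in X(\Omega)$. Fix $j$ and apply Lemma \ref{lem:lemma2.13}: there is a representative $\tilde u_j$ of $u$, absolutely continuous as a $V$-valued map on almost every segment parallel to the $x_j$-axis, with the modulus bound $\|\tilde u_j(x+te_j)-\tilde u_j(x+se_j)\|_V\le\int_s^t g(x+\tau e_j)\,d\tau$ along such segments and the pointwise estimate \eqref{eq:lemma2.13est}. For a $1$-Lipschitz $\varphi:V\to\mathbb R$ the composition $\varphi\circ\tilde u_j$ is then real-valued and absolutely continuous on a.e.\ such segment, since $|\varphi(\tilde u_j(x+te_j))-\varphi(\tilde u_j(x+se_j))|\le\|\tilde u_j(x+te_j)-\tilde u_j(x+se_j)\|_V$; differentiating and invoking \eqref{eq:lemma2.13est} gives $|\partial_j(\varphi\circ\tilde u_j)|\le g$ a.e. Since $\mathbb R$ has the Radon--Nikod\'ym property, Lemma \ref{lem:lemma2.12} applied with $V$ replaced by $\mathbb R$ yields $\varphi\circ u\in W^1X(\Omega)$, and from its proof $\partial_j(\varphi\circ u)=\partial_j(\varphi\circ\tilde u_j)$ a.e., so $|\nabla(\varphi\circ u)|\le\sqrt n\,g$ a.e. Hence (ii) holds with $\rho=\sqrt n\,g$, which does not depend on $\varphi$.

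For (iii) $\Rightarrow$ (i), fix $v^*\in V^*$ with $\|v^*\|\le1$; we must show $\langle v^*,u\rangle\in W^1X(\Omega)$ with $|\nabla\langle v^*,u\rangle|$ dominated by a fixed element of $X(\Omega)$. As $u$ is strongly measurable it is essentially separably valued (Pettis), so choose a null set $Z$ and a countable set $\{v_i\}$ dense in $u(\Omega\setminus Z)$. By (iii) each $u_{v_i}$ lies in $W^1X(\Omega)\subset W^{1,1}_{\mathrm{loc}}(\Omega)$ and hence, by Theorem \ref{theorem:W-ACL}, has an $ACL$ representative with $|\partial_j u_{v_i}|\le\rho$ a.e. Carrying out the same bookkeeping as in the proof of Lemma \ref{lem:lemma2.13} --- discard the countably many exceptional null sets, pass by Fubini to a.e.\ line, and combine density of $\{v_i\}$ with the reverse triangle inequality $|\,\|u(x)-v_i\|_V-\|u(y)-v_i\|_V\,|\le\|u(x)-u(y)\|_V$ (equality in the limit $v_i\to u(y)$) --- one obtains, for a.e.\ segment $l(\tau)=x_0+\tau e_j$ and a suitable representative, $\|u(l(t))-u(l(s))\|_V\le\int_s^t\rho(l(\tau))\,d\tau$. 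In particular $|\langle v^*,u(l(t))\rangle-\langle v^*,u(l(s))\rangle|\le\int_s^t\rho(l(\tau))\,d\tau$, so $\langle v^*,u\rangle$ has a representative absolutely continuous on a.e.\ line parallel to each axis with $|\partial_j\langle v^*,u\rangle|\le\rho$ a.e.; since moreover $|\langle v^*,u\rangle|\le\|u(\cdot)\|_V\in X(\Omega)$, Lemma \ref{lem:lemma2.12} (again with $V=\mathbb R$) gives $\langle v^*,u\rangle\in W^1X(\Omega)$ and $|\nabla\langle v^*,u\rangle|\le\sqrt n\,\rho$. Since $\sqrt n\,\rho$ is independent of $v^*$, conditions (A) and (B) in the definition of $R^1X$ hold, so $u\in R^1X(\Omega;V)$.

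The main obstacle is this last implication, and more precisely the reconciliation of the countably many $ACL$ representatives of the $u_{v_i}$ into a single ACL-on-lines statement for $u$ measured in the norm of $V$: each $u_{v_i}$ is defined only up to a null set, and one must arrange a common full-measure family of lines and a common good representative of $u$ so that the supremum over $i$ of the increments of $u_{v_i}$ actually reproduces $\|u(x)-u(y)\|_V$ with the correct bound by $\int\rho$. This is exactly the technical core of Lemma \ref{lem:lemma2.13}, and the proof here should reuse that construction rather than reprove it. The remaining steps --- the Lipschitz-composition estimates and the two applications of Lemma \ref{lem:lemma2.12} with $V=\mathbb R$ --- are routine.
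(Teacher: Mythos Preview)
Your proof is correct and follows essentially the same route as the paper's. For (i)$\Rightarrow$(ii) the paper simply invokes the mapping theorem (Theorem~\ref{thm:mapping2}), whose content ultimately rests on the same combination of Lemma~\ref{lem:lemma2.13} and Lemma~\ref{lem:lemma2.12} that you spell out directly; for (iii)$\Rightarrow$(i) the paper carries out exactly the argument you describe, reusing the construction of Lemma~\ref{lem:lemma2.13} with the dense set $\{v_i\}$ taken in $u(\Omega\setminus\Sigma_0)$ and passing through the chain $|\langle v^*,u(l(t))\rangle-\langle v^*,u(l(s))\rangle|\le\|u(l(t))-u(l(s))\|_V=\lim_k\big|\,\|u(l(t))-v_{i_k}\|_V-\|u(l(s))-v_{i_k}\|_V\,\big|\le\int_s^t\rho$.
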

\begin{proof}
(i)$\Rightarrow$(ii) follows from theorem \ref{thm:mapping2}. (ii)$\Rightarrow$(iii) is obvious.
To prove (iii)$\Rightarrow$(i), we use the same approach as in the proof o lemma \ref{lem:lemma2.13}.
For now we take a dense set $\{v_i\}_{i\in\mathbb N}$ in $u(\Omega\setminus\Sigma_0)$.
Let $\Sigma_i\subset\Omega$ be a set of measure zero, where $\|u-v\|_V$ differs from its absolutely continuous representative. 
Fix $j\in\{1,\dots,n\}$. Let $l(\tau) = x_0 +\tau e_j$ be a segment in $\Omega$ so that (a)-(c) hold true.
Then choosing a sequence $v_{i_k}\to u(x_0+te_j)$ we obtain
\begin{multline*}
|\langle v^*, u(x_0+t e_j) \rangle -\langle v^*, u(x_0+s e_j) \rangle| 
\leq \|u(x_0+t e_j) - u(x_0+s e_j)\|\\
=\lim_{k\to\infty}\big|\|u(x_0+t e_j) - v_{i_k}\|_V - \|u(x_0+s e_j) - v_{i_k}\|_V \big|
\leq \int_s^t\rho(x_0+\tau e_j)\, d\tau.
\end{multline*}
So there is a representative $u_{v^*}$ of $\langle v^*, u \rangle$, which is absolutely continuous on almost every compact line segment in $\Omega$ parallel to $x_j$-axis,
and its partial derivative exists and satisfies  $\big\|\frac{\partial u_{v^*}}{\partial x_j}\big\|_V \leq \rho$.
Due to lemma \ref{lem:lemma2.12} $\langle v^*, u \rangle\in W^1X(\Omega)$, and by the estimate above $|\nabla \langle v^*, u \rangle|\leq \sqrt{n}\rho$.
Thus, conditions (A) and (B) are realized. 
\end{proof}

\subsection{Newtonian space}
The concept of Newtonian spaces is based on the Newton--Leibniz formula 
and employs the idea of estimating the difference of function values in two distinct points by the integral over a curve that connects those points.  
An extensive study of Newtonian spaces $N^{1,p}$ could be found in \cite{HKST2015}. 
Whereas, in \cite{Maly2016}, L. Mal\'y  constructed the theory of Newtonian spaces based on quasi-Banach function lattices. 
Here we make use of elements of that theory taking into account that $X(\Omega)$, in particular, is a quasi-Banach function lattice.
  
\textit{$X$-modulus} of the family of curves $\Gamma$ is defined by 
\[
\Mod_X(\Gamma) = \inf\|\rho\|_{X(\Omega)},
\]
where the infimum is taken over all non-negative Borel functions $\rho$ that satisfy $\int_\gamma \rho\, ds\geq 1$ for all $\gamma\in\Gamma$ (such functions are called admissible densities for~$\Gamma$).

\begin{lem}[Estimates for cylindrical curve families]\label{lemma:cyl-est}
Consider a cylinder $G=X\times J$
where $E$ is a Borel set in $R^{n-1}$ with $\mu^{n-1}(E)<\infty$, and $J\subset\mathbb R$ is an interval of length $h\in(0,\infty)$. 
Let $\Gamma(E)$ be the family of all curves $\gamma_{y}:J\to G$, $\gamma_y(t) = (y, t)$ 
for $y\in E\setminus\Sigma$, with $\mu^{n-1}(\Sigma)=0$.
Then
\begin{equation}\label{eq:cyl-est}
\mu^{n-1}(E) \leq \|\chi_G\|_{X'}\cdot\Mod_X(\Gamma(E)) 
\end{equation}
and
\begin{equation}\label{eq:cyl-est2}
\Mod_X(\Gamma(E)) \leq \|\chi_G\|_{X}\cdot h^{-1}.
\end{equation}
\end{lem}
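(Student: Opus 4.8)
The plan is to prove the two inequalities separately, both by constructing explicit admissible densities (for \eqref{eq:cyl-est2}) or test functions (for \eqref{eq:cyl-est}) and then invoking the H\"older inequality together with the defining axioms of a Banach function space.

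For \eqref{eq:cyl-est2} the idea is that a single cheap density already covers all of $\Gamma(E)$. Each curve $\gamma_y$ traverses $J$, which has length $h$, so the constant function $\rho_0 = h^{-1}\chi_G$ satisfies $\int_{\gamma_y}\rho_0\,ds = h^{-1}\cdot h = 1$ for every $y$; hence $\rho_0$ is admissible for $\Gamma(E)$ (and it is Borel, being a multiple of the indicator of a Borel set). Taking the infimum in the definition of $\Mod_X$ against this particular $\rho_0$ gives $\Mod_X(\Gamma(E))\le\|\rho_0\|_X = h^{-1}\|\chi_G\|_X$, which is \eqref{eq:cyl-est2}. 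I would note in passing that property (P3) guarantees $\chi_G\in X(\Omega)$ since $|G| = h\,\mu^{n-1}(E)<\infty$, so the right-hand side is finite.

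For \eqref{eq:cyl-est} the strategy runs in the opposite direction: I want to bound $\mu^{n-1}(E)$ from below in terms of \emph{any} admissible $\rho$. Fix an admissible density $\rho\ge 0$ for $\Gamma(E)$. For each $y\in E\setminus\Sigma$ admissibility gives $\int_J\rho(y,t)\,dt = \int_{\gamma_y}\rho\,ds\ge 1$. Integrating this inequality over $y\in E$ (the exceptional set $\Sigma$ has measure zero) and applying Fubini's theorem yields
\[
\mu^{n-1}(E)\le\int_E\int_J\rho(y,t)\,dt\,dy = \int_G\rho\,dx = \int_\Omega \rho\,\chi_G\,dx.
\]
Now apply the H\"older inequality for Banach function spaces (stated in the Preliminaries) with the pair $X$, $X'$: since $\chi_G\in X'(\Omega)$ by (P3) applied to the associate space, $\int_\Omega\rho\,\chi_G\,dx\le\|\rho\|_X\|\chi_G\|_{X'}$. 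Combining, $\mu^{n-1}(E)\le\|\chi_G\|_{X'}\|\rho\|_X$, and taking the infimum over all admissible $\rho$ gives \eqref{eq:cyl-est}.

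I expect no serious obstacle here; this is the standard ``cylinder estimate'' for moduli adapted from the $L^p$ setting, and the only points requiring a little care are measurability bookkeeping (that $\rho$, being Borel, makes $t\mapsto\rho(y,t)$ measurable for a.e.\ $y$, so Fubini applies) and checking that $\chi_G$ lies in both $X$ and $X'$, which is exactly what axiom (P3) provides for both a Banach function space and its associate. If one wanted to be scrupulous about the case $\|\chi_G\|_{X'}=\infty$ or when $\Gamma(E)$ admits no finite-norm density, the inequalities hold trivially, so I would dispose of those degenerate cases in a single sentence.
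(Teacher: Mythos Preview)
Your proof is correct and matches the paper's own argument essentially line for line: the paper also uses Fubini and H\"older on an arbitrary admissible $\rho$ to get \eqref{eq:cyl-est}, and observes that $h^{-1}\chi_G$ is admissible to get \eqref{eq:cyl-est2}. Your additional remarks about (P3) and degenerate cases are fine but not present in the paper's terser version.
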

\begin{proof}
Let $\rho$ be an admissible density for $\Gamma(E)$.
By the Fubini theorem and H\"older's inequality we have
\[
\mu^{n-1}(E) \leq \int_{E}\int_{\gamma_y}\rho\, ds\, dy = \int_G\rho\, dx \leq \|\rho\|_X\cdot\|\chi_G\|_{X'}, 
\]
which implies  \eqref{eq:cyl-est}.
To obtain \eqref{eq:cyl-est2}, we observe that $\frac{1}{h}\cdot\chi_G$ is an admissible density for $\Gamma(E)$.
\end{proof}

The next lemma is a modification of \cite[Lemma 2.4]{CJPA2020}
\begin{lem}
Let $H$ be a hyperplane in $\mathbb R^n$ and $P_H:\mathbb R^n\to H$ be the orthogonal projector.
Suppose we are given some family $\Gamma$ consisting of 
line segments orthogonal to $H$.
If $\Mod_X(\Gamma) = 0$, then $\mu^{n-1}(P_H\Gamma) = 0$. 
\end{lem}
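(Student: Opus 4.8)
The plan is to prove the contrapositive in a quantitative form: I will show that if $\mu^{n-1}(P_H\Gamma) > 0$, then $\Mod_X(\Gamma) > 0$. Without loss of generality I may assume $H$ is the hyperplane $\{x_n = 0\}$, so $P_H = P_n$ is projection along the $x_n$-axis, and each segment in $\Gamma$ is parallel to $e_n$. Write $E = P_H\Gamma \subset \mathbb R^{n-1}$, a set of positive $\mu^{n-1}$-measure; I may assume $\mu^{n-1}(E) < \infty$ by replacing $E$ with a subset of finite positive measure (and $\Gamma$ with the corresponding subfamily, whose modulus is no larger). The key point is that although the segments in $\Gamma$ over different base points $y \in E$ may have wildly different lengths and positions in the $x_n$-direction, a pigeonhole argument lets me extract a subfamily that is comparable to a cylindrical family, to which Lemma \ref{lemma:cyl-est} applies.

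First I would decompose $E$ according to the geometry of the fibers. For each $y \in E$ let $I_y \subset \mathbb R$ be the interval such that $\{y\} \times I_y$ is the segment of $\Gamma$ over $y$ (if several segments lie over the same $y$, pick one). For integers $k \in \mathbb Z$ and $m \geq 1$ set
\[
E_{k,m} = \{ y \in E : I_y \subset [k\,2^{-m}, (k+2)\,2^{-m}] \text{ and } \operatorname{length}(I_y) \geq 2^{-m-1} \}.
\]
Every $y \in E$ lies in some $E_{k,m}$: choose $m$ so that $2^{-m-1} \leq \operatorname{length}(I_y) \leq 2^{-m}$, and then $I_y$, having length at most $2^{-m}$, is contained in a union of two adjacent dyadic intervals of length $2^{-m}$, i.e.\ in some $[k\,2^{-m},(k+2)\,2^{-m}]$. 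Since the index set is countable and $\mu^{n-1}(E) > 0$, there exist $k_0, m_0$ with $\mu^{n-1}(E_{k_0,m_0}) > 0$; call this set $E_0$ and let $J_0 = [k_0\,2^{-m_0}, (k_0+2)\,2^{-m_0}]$, of length $h_0 = 2^{-m_0+1}$.

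Now let $G_0 = E_0 \times J_0$, a cylinder with $\mu^{n-1}(E_0) \in (0,\infty)$, so by properties (P3) and (P4) of the Banach function space $\chi_{G_0} \in X(\Omega)$ and $\chi_{G_0} \in X'(\Omega)$ with finite norms. Consider the cylindrical curve family $\Gamma(E_0)$ from Lemma \ref{lemma:cyl-est}, consisting of the full vertical segments $\gamma_y : J_0 \to G_0$, $\gamma_y(t) = (y,t)$, for $y \in E_0$. I claim $\Mod_X(\Gamma) \geq c\,\Mod_X(\Gamma(E_0))$ for a constant $c$ depending only on $h_0$ (in fact $c$ can be taken to be $\operatorname{length}(I_y)/h_0 \geq 1/4$). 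Indeed, if $\rho$ is admissible for $\Gamma$, then for each $y \in E_0$ we have $\int_{\gamma_y \cap (\{y\}\times I_y)} \rho\, ds = \int_{I_y} \rho(y,t)\,dt \geq 1$ because $\{y\}\times I_y$ is (contains) a segment of $\Gamma$; since $I_y \subset J_0$, this gives $\int_{J_0} \rho(y,t)\,dt \geq 1$, i.e.\ the \emph{same} $\rho$ (restricted to $G_0$, then extended by $0$) is admissible for $\Gamma(E_0)$. Hence $\Mod_X(\Gamma(E_0)) \leq \|\rho\|_{X(\Omega)}$, and taking the infimum over admissible $\rho$ for $\Gamma$ yields $\Mod_X(\Gamma(E_0)) \leq \Mod_X(\Gamma)$. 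Combining with the lower bound \eqref{eq:cyl-est} of Lemma \ref{lemma:cyl-est},
\[
0 < \mu^{n-1}(E_0) \leq \|\chi_{G_0}\|_{X'}\cdot \Mod_X(\Gamma(E_0)) \leq \|\chi_{G_0}\|_{X'}\cdot \Mod_X(\Gamma),
\]
so $\Mod_X(\Gamma) > 0$, which is the desired contrapositive.

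The main obstacle is precisely the first reduction: the segments of $\Gamma$ are not assumed to have any uniform length or location, so one cannot apply the cylindrical estimate directly. The dyadic pigeonhole above is what converts an arbitrary orthogonal family into one whose fibers all sit inside a fixed bounded interval and have length bounded below, making Lemma \ref{lemma:cyl-est} usable; once that is in place the rest is a short computation. A minor technical point to check is measurability of the sets $E_{k,m}$ (equivalently, of the map $y \mapsto I_y$), which can be arranged by a standard selection argument or simply by noting that for the statement we only need \emph{some} subfamily with positive-measure projection, so one may pass to a measurable subset at the cost of shrinking $\Gamma$ further.
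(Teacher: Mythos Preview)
Your argument is correct and follows essentially the same route as the paper: both reduce to the cylindrical lower bound of Lemma~\ref{lemma:cyl-est} via the observation that an admissible density $\rho$ for $\Gamma$ is automatically admissible for the family of \emph{full} vertical segments over the same base set, since $I_y\subset J_0$ gives $\int_{J_0}\rho(y,t)\,dt\ge\int_{I_y}\rho(y,t)\,dt\ge1$. The paper phrases this as a direct proof (for each $k$, cover the base by a Borel superset and compare moduli), while you do the contrapositive, but the core step is identical.

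Two small remarks. First, your index range $m\ge1$ misses segments of length exceeding $1/2$ (there is no $m\ge1$ with $2^{-m-1}\le\operatorname{length}(I_y)\le2^{-m}$); simply allow $m\in\mathbb Z$, or imitate the paper's simpler one-parameter decomposition $E_k=\{y\in B_k^{n-1}:I_y\subset[-k,k]\}$. Second, the lower bound $\operatorname{length}(I_y)\ge2^{-m-1}$ and the constant $c$ are never actually used --- your own computation shows $\Mod_X(\Gamma(E_0))\le\Mod_X(\Gamma)$ with $c=1$ --- so the two-parameter dyadic pigeonhole is more elaborate than necessary.
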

\begin{proof}
Let $w\in R^{n}$ be a unit normal of $H$.
Each curve in $\Gamma$ is of the form $\gamma_y=y+wt$, for some $y\in H$, and defined on some interval $a\leq t\leq b$.

For $k\in\mathbb N$, we are picking a subfamily $\Gamma_k$ in the following way:
for each $y\in B^{d-1}_k$ take one if any $\gamma\in\Gamma$ so that $P_H\gamma = y$ and 
is defined on interval $[a,b]\subset [-k,k]$, where $B^{n-1}_k$ is a $(n-1)$-ball in $H$ with radius $k$.
Denote $E_{k}:= P_H\Gamma_{k}$ and take a Borel set $\tilde E_{k}\supset E_{k}$ with the property $\mu^{n-1}(\tilde E_{k}) = \mu^{n-1}(E_{k})$.  
Consider an additional family $\tilde \Gamma_k$ consisting of curves $\gamma_y(t)= y+wt$ for each $y\in \tilde E_{k}$ defined on interval $[-k,k]$.
Then, $\tilde \Gamma_k$ and $\tilde E_{k}$ form a cylinder $G_{k}$ with base $\tilde E_{k}$ and height $2k$.

Therefore, due to estimate \eqref{eq:cyl-est}
\[
\mu^{n-1}( E_{k}) = \mu^{n-1}(\tilde E_{k}) \leq \|\chi_{G_{k}}\|_{X'}\cdot\Mod_X(\tilde \Gamma_{k})
\leq \|\chi_{G_{k}}\|_{X'}\cdot\Mod_X(\Gamma_{k}).
\]
So
\[
\mu^{n-1}(P_H\Gamma) \leq \sum_{k}\mu^{n-1}(\Gamma_{k}) = 0.
\]


\end{proof}
\begin{lem}[Fuglede's lemma]\label{lemma:Fuglede}
Assume that $g_k\to g$ in $X(\Omega)$ as $k\to\infty$. 
Then, there is a subsequence (which we still denote by $\{g_k\}$) such that
\[
\int_\gamma g_k\, ds \to \int_\gamma g\, ds \quad \text{ as } k\to\infty
\]
for $\Mod_X$-a.e. curve $\gamma$, while all the integrals are well defined and real-valued.
\end{lem}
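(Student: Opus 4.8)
The plan is to mimic the classical proof of Fuglede's lemma (as in the $L^p$ setting, cf. \cite{HKST2015}), replacing the $L^p$-norm estimates by the corresponding estimates for the Banach function norm $\|\cdot\|_X$ together with the $X$-modulus. The key observation is that $\Mod_X$ behaves like an outer measure on curve families: it is monotone and countably subadditive, the latter being a consequence of the lattice property (P1) and the Fatou property (P2) of $X(\Omega)$ (if $\rho_i$ is admissible for $\Gamma_i$ with $\|\rho_i\|_X \le \Mod_X(\Gamma_i) + \varepsilon 2^{-i}$, then $\rho = \sum_i \rho_i$ is admissible for $\bigcup_i \Gamma_i$ and $\|\rho\|_X \le \sum_i \|\rho_i\|_X$ by (P2)). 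Hence a curve family that is a countable union of families of $X$-modulus zero again has $X$-modulus zero, and a Borel function with finite $X$-norm has finite line integral along $\Mod_X$-a.e.\ curve.

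\textbf{Steps.} First, pass to a subsequence so that $\|g_k - g\|_X \le 2^{-k}$; set $h_k = |g_k - g|$, so $\|h_k\|_X \le 2^{-k}$, and put $H = \sum_{k=1}^\infty h_k$. By the Fatou property (P2) applied to the partial sums, $\|H\|_X \le \sum_k \|h_k\|_X < \infty$, so $H \in X(\Omega)$ and in particular $H$ is finite a.e.\ and locally integrable (via (P4)). Second, for each $m \in \mathbb N$ consider the curve family
\[
\Gamma_m = \Bigl\{ \gamma : \int_\gamma H\, ds \ge \tfrac1m \Bigr\}.
\]
Then $m H$ is an admissible density for $\Gamma_m$, whence $\Mod_X(\Gamma_m) \le m\|H\|_X$; but more usefully, $\frac{1}{N}\cdot$(the tail $\sum_{k>N} h_k$ plus finitely many terms) arguments are not even needed — instead observe directly that for the family $\Gamma_\infty = \{\gamma : \int_\gamma H\, ds = \infty\}$ we have, for every $N$, that $\frac1N H$ is admissible, so $\Mod_X(\Gamma_\infty) \le \frac1N\|H\|_X \to 0$, giving $\Mod_X(\Gamma_\infty) = 0$. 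Third, for $\Mod_X$-a.e.\ curve $\gamma$ (namely those not in $\Gamma_\infty$), $\int_\gamma H\, ds < \infty$; since $\int_\gamma h_k\, ds$ are the terms of a convergent series, $\int_\gamma h_k\, ds \to 0$, and therefore
\[
\Bigl| \int_\gamma g_k\, ds - \int_\gamma g\, ds \Bigr| \le \int_\gamma h_k\, ds \to 0.
\]
Finally, note that $\int_\gamma g\, ds$ and each $\int_\gamma g_k\, ds$ are well defined and real-valued for such $\gamma$, because $\int_\gamma |g|\, ds \le \int_\gamma |g_1|\, ds + \int_\gamma H\, ds < \infty$ outside the $\Mod_X$-null family where any of these fail (a further countable union of null families, using that each $g_k \in X(\Omega)$ has finite integral on $\Mod_X$-a.e.\ curve by the same admissibility trick).

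\textbf{Main obstacle.} The only genuine point requiring care is establishing countable subadditivity of $\Mod_X$ and, relatedly, that a function in $X(\Omega)$ is integrable along $\Mod_X$-a.e.\ curve — both rest on the Fatou property (P2) to pass to the limit in $\|\sum_{i=1}^N \rho_i\|_X$ and on condition (P4) to guarantee that $X(\Omega) \subset L^1_{loc}(\Omega)$ so that line integrals make sense. These facts are standard in Mal\'y's framework \cite{Maly2016}, so one may either cite them or include the short argument above. Everything else is a verbatim transcription of the classical proof with $\|\cdot\|_X$ in place of $\|\cdot\|_{L^p}$.
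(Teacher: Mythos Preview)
Your argument is the standard Fuglede proof transplanted to the $X$-setting, and it is correct; the only cosmetic omission is that admissible densities for $\Mod_X$ are required to be Borel, so one should pass to Borel representatives of $H$ and of the $h_k$ (which changes nothing on $\Mod_X$-a.e.\ curve). Note, however, that the paper does not give its own proof of this lemma at all: it states Fuglede's lemma without argument, relying on the sentence a few lines earlier that Mal\'y \cite{Maly2016} has developed the Newtonian theory over quasi-Banach function lattices, of which $X(\Omega)$ is an instance. So there is no competing approach to compare against --- you have simply supplied the (correct, classical) proof that the paper elects to omit.
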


\begin{lem}[{\cite[Proposition 5.10.]{M2013}}]\label{lemma:cap-mod-zero}
Let $E\subset\Omega$ be an arbitrary set,
define 
$
\Gamma_E = \{\gamma\in\Gamma(\Omega) \colon \gamma^{-1}(E) \ne \emptyset\}
$
--- the collection of all curves in $\Omega$ that meet $E$.
If $\Capp_X(E) = 0$, then $\Mod_X(\Gamma_E) = 0$.
\end{lem}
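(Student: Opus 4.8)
The plan is to reduce the capacity-zero hypothesis to a modulus-zero statement by comparing the curve family $\Gamma_E$ with families that meet the exceptional set coming from a capacity-minimizing sequence. First I would invoke the definition of $X$-capacity: since $\Capp_X(E)=0$, there is a sequence $u_i\in W^1X(\Omega)$ with $u_i\ge 1$ on a neighborhood of $E$ (or on $E$, depending on the convention) and $\|u_i\|_{W^1X}\to 0$. By passing to a subsequence I may assume $\|u_i\|_{W^1X}\le 2^{-i}$, and by the Meyers--Serrin density (which holds once $\|\cdot\|_X$ is absolutely continuous with the translation inequality property; in the general case one first replaces $u_i$ by a smooth approximant) I may take $u_i\in C^\infty(\Omega)\cap W^1X(\Omega)$. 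Set $g=\sum_i |\nabla u_i|$; then $\|g\|_X\le\sum_i\|u_i\|_{W^1X}<\infty$, so $g\in X(\Omega)$, and the partial sums $g_k=\sum_{i\le k}|\nabla u_i|$ converge to $g$ in $X(\Omega)$ by absolute continuity of the norm (or by a direct Fatou/monotone argument via (P2)).

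Next I would apply Fuglede's lemma (Lemma~\ref{lemma:Fuglede}) to the convergent sequence $g_k\to g$: after passing to a further subsequence, $\int_\gamma g_k\,ds\to\int_\gamma g\,ds$ with all integrals finite for $\Mod_X$-a.e.\ curve $\gamma$. In particular, for $\Mod_X$-a.e.\ $\gamma$ we have $\int_\gamma |\nabla u_i|\,ds\to 0$ and $\int_\gamma g\,ds<\infty$. Let $\Gamma_0$ be the exceptional family where this fails, so $\Mod_X(\Gamma_0)=0$. Now take any $\gamma\in\Gamma_E\setminus\Gamma_0$; it meets $E$, say $\gamma(t_0)\in E$. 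For the smooth functions $u_i$ and any $s$ in the domain of $\gamma$, the fundamental theorem of calculus along $\gamma$ gives $|u_i(\gamma(s))-u_i(\gamma(t_0))|\le\int_\gamma|\nabla u_i|\,ds\to 0$; since $u_i(\gamma(t_0))\ge 1$, this forces $\limsup_i u_i(\gamma(s))\ge 1$ for every such $s$, i.e.\ $u_i\circ\gamma$ does not tend to $0$ uniformly on $\gamma$. The key step is to convert this into a contradiction with $\|u_i\|_{W^1X}\to 0$: I would argue that for $\Mod_X$-a.e.\ curve the restriction $u_i\circ\gamma\to 0$, because otherwise a positive $X$-modulus of curves would "see" a lower bound on $|\nabla u_i|$ (via the admissible-density characterization of $\Mod_X$), contradicting $\|\,|\nabla u_i|\,\|_X\to 0$. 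Concretely, the set of curves on which $u_i\circ\gamma$ fails to go to zero can be covered, using $\int_\gamma g\,ds<\infty$ and the tail estimate $\int_\gamma\sum_{i\ge k}|\nabla u_i|\,ds\to 0$, so that $\varepsilon^{-1}\sum_{i\ge k}|\nabla u_i|$ becomes admissible for it; letting $k\to\infty$ shows that family has $\Mod_X$ zero.

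Assembling these pieces: $\Gamma_E$ is contained, up to the $\Mod_X$-null family $\Gamma_0$, in a family of $\Mod_X$ zero, hence $\Mod_X(\Gamma_E)=0$ by countable subadditivity of $\Mod_X$ (which holds for $X$-modulus over a Banach function space). I expect the main obstacle to be the bookkeeping in the last step — making precise, purely in terms of admissible densities and the lattice/Fatou properties (P1)--(P2), that a curve family on which some fixed-tail sum of the $|\nabla u_i|$ has infinite integral (equivalently, on which the normalized tail is admissible) has vanishing $X$-modulus — since this is where the quasi-Banach function lattice structure of $X(\Omega)$ and Mal\'y's framework for $N^1X$ are actually used. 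Everything else (the capacity minimizer, passing to smooth functions, the FTC along curves, Fuglede) is routine and parallels the classical $L^p$ proof in \cite[Chapter~7]{HKST2015}. Since the statement is quoted from \cite[Proposition~5.10]{M2013}, I would in fact simply refer to that source and sketch only this outline.

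\begin{proof}
See \cite[Proposition~5.10]{M2013}; we indicate the argument. Since $\Capp_X(E)=0$, pick $u_i\in C^\infty(\Omega)\cap W^1X(\Omega)$ with $u_i\ge 1$ on (a neighborhood of) $E$ and $\|u_i\|_{W^1X}\le 2^{-i}$. Put $g_k=\sum_{i=1}^k|\nabla u_i|$ and $g=\sum_{i=1}^\infty|\nabla u_i|\in X(\Omega)$; then $g_k\to g$ in $X(\Omega)$. By Lemma~\ref{lemma:Fuglede}, after passing to a subsequence there is a family $\Gamma_0$ with $\Mod_X(\Gamma_0)=0$ such that for every $\gamma\notin\Gamma_0$ the integrals $\int_\gamma|\nabla u_i|\,ds$ are finite and tend to $0$, and $\int_\gamma g\,ds<\infty$. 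Fix $\gamma\in\Gamma_E\setminus\Gamma_0$ meeting $E$ at $\gamma(t_0)$; by the fundamental theorem of calculus along $\gamma$, for every $s$ one has $|u_i(\gamma(s))-u_i(\gamma(t_0))|\le\int_\gamma|\nabla u_i|\,ds$, so $\limsup_i u_i(\gamma(s))\ge 1$. Hence on every $\gamma\in\Gamma_E\setminus\Gamma_0$ the functions $u_i\circ\gamma$ do not converge uniformly to $0$. For each $k$, the family of curves on which $\sum_{i\ge k}|\nabla u_i|$ has infinite line integral, together with those on which $u_i\circ\gamma\not\to 0$, admits $\varepsilon^{-1}\sum_{i\ge k}|\nabla u_i|$ as an admissible density for arbitrarily small $\varepsilon$ once $k$ is large, and $\big\|\sum_{i\ge k}|\nabla u_i|\big\|_X\to 0$; therefore that family has $\Mod_X$ zero. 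Letting $k\to\infty$ and using countable subadditivity of $\Mod_X$ we conclude $\Mod_X(\Gamma_E)=0$.
\end{proof}
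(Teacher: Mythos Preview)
The paper does not supply its own proof of this lemma; it merely cites \cite[Proposition~5.10]{M2013}. Your instinct to cite the source and add a sketch is reasonable, but the sketch has a genuine gap at the decisive step.

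Everything up through the Fuglede step is fine: for $\gamma\in\Gamma_E\setminus\Gamma_0$ you correctly obtain $u_i(\gamma(t_0))\ge 1$ and $\int_\gamma|\nabla u_i|\,ds\to 0$, whence the fundamental theorem of calculus gives $u_i(\gamma(s))\ge 1-\int_\gamma|\nabla u_i|\,ds$ for every $s$. The problem is your claim that $\varepsilon^{-1}\sum_{i\ge k}|\nabla u_i|$ is admissible for the family of curves on which $u_i\circ\gamma\not\to 0$. Admissibility would require $\int_\gamma\sum_{i\ge k}|\nabla u_i|\,ds\ge\varepsilon$, yet for every $\gamma\in\Gamma_E\setminus\Gamma_0$ you have just shown that this tail integral tends to $0$ as $k\to\infty$ (it is the tail of the convergent series with sum $\int_\gamma g\,ds<\infty$). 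So your proposed density is precisely \emph{not} admissible on the curves you need to cover, and the argument does not close. You yourself flagged this bookkeeping as ``the main obstacle'' in your preamble, and indeed the proof you then wrote does not resolve it.

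What is missing is any use of the smallness of $\|u_i\|_X$ itself, not only of $\|\,|\nabla u_i|\,\|_X$. One standard route (in Mal\'y's framework, and in the $L^p$ case in \cite{HKST2015}): truncate so that $0\le u_i\le 1$ with $u_i=1$ on $E$, set $v=\sum_i u_i\in X(\Omega)$ with weak upper gradient $\rho=\sum_i g_i\in X(\Omega)$; then $v=\infty$ on $E$, and on $\Mod_X$-a.e.\ curve the pair $(v,\rho)$ satisfies the upper-gradient inequality with $\int_\gamma\rho<\infty$, forcing $v$ to be either everywhere finite or everywhere infinite along $\gamma$; one then combines this with $v<\infty$ a.e.\ and a further modulus estimate. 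Alternatively one builds admissible densities of the form $c(u_i+g_i)$ after stratifying $\Gamma_E$ by a lower bound on curve length. Either way, your sketch omits exactly this bridge.

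A secondary point: you take the $u_i$ smooth via Meyers--Serrin, but in this paper that requires $\|\cdot\|_X$ to be absolutely continuous and to have the translation inequality property. The lemma is stated for a general Banach function space $X$, so that reduction is not available in the stated generality; Mal\'y's argument works directly with Newtonian functions and their $X$-weak upper gradients.
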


\textit{The Newtonian space} $N^1X(\Omega; V)$ consists of all functions $u\in X(\Omega; V)$ for which there is a non-negative Borel function $\rho\in X(\Omega)$ such that
\[
\|u(\gamma(0)) - u (\gamma(l_\gamma))\|_V 
\leq \int_{\gamma} \rho\, ds
\]
for $\Mod_X$-a.e. curve $\gamma$ in $\Omega$. Each such function $\rho$ is called \textit{$X$-weak upper gradient} of $u$.
Define a semi-norm on $N^1X(\Omega; V)$ via
\[
\|f\|_{N^1X} = \|f\|_{X(\Omega;V)} + \inf\|\rho\|_{X(\Omega)},
\]
where the infimum is over all $X$-weak upper gradients of $u$. 
Furthermore, we assume that $N^1X(\Omega; V)$ consists of equivalence classes of functions,
where $u_1\sim u_2$ means $\|u_1 - u_2\|_{N^1X} = 0$.
We write $N^1X(\Omega)$ instead of $N^1X(\Omega;\mathbb R)$. 


\begin{thm}\label{theorem:NinW}
Let $\Omega$ be a domain in $\mathbb R^n$ and $X(\Omega)$ be a Banach function space.

1)If $u\in N^1X(\Omega)$, then $u\in W^1X(\Omega)$ and $|\nabla u| \leq \sqrt{n}\rho$ a.e. on $\Omega$,
where $\rho$ is any $X$-weak upper gradient of $u$.

2) Suppose norm $\|\cdot\|_{X}$ is absolutely continuous and has the translation inequality property.
If $u\in W^1X(\Omega)$,  then there is a representative  $\tilde u\in N^1X(\Omega)$, and 
as a $X$-weak upper gradient of $\tilde u$, one can choose a Borel representative of $|\nabla u|$.
\end{thm}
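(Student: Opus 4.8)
Fix a Borel $X$-weak upper gradient $\rho$ of $u$ and let $\Gamma_0$ be the family of rectifiable curves $\gamma$ in $\Omega$ along which either $\int_\gamma\rho\,ds=\infty$ or the defining inequality $|u(\gamma(0))-u(\gamma(l_\gamma))|\le\int_\gamma\rho\,ds$ fails, so that $\Mod_X(\Gamma_0)=0$ by definition of $N^1X$. Fix $j\in\{1,\dots,n\}$, let $H_j$ be the hyperplane through the origin orthogonal to $e_j$, and let $\Gamma_0^j\subset\Gamma_0$ be the subfamily of those curves that are line segments parallel to $e_j$; then $\Mod_X(\Gamma_0^j)=0$, and the projection lemma above gives $\mu^{n-1}(P_{H_j}\Gamma_0^j)=0$. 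Since $\rho\in X(\Omega)\subset L^1_{loc}(\Omega)$ by (P4), Fubini's theorem shows that for $\mu^{n-1}$-a.e. $y\in H_j$ the function $t\mapsto\rho(y+te_j)$ is integrable on compact subintervals of $\ell_y\cap\Omega$, where $\ell_y=\{y+te_j:t\in\mathbb R\}$. For such a $y$ that moreover lies outside $P_{H_j}\Gamma_0^j$ no subsegment of $\ell_y$ belongs to $\Gamma_0$, hence $|u(y+te_j)-u(y+se_j)|\le\int_s^t\rho(y+\tau e_j)\,d\tau$ for all $s<t$ with $\{y+\tau e_j:s\le\tau\le t\}\subset\Omega$; therefore $t\mapsto u(y+te_j)$ is absolutely continuous on compact subintervals and differentiable a.e. with $|\frac{d}{dt}u(y+te_j)|\le\rho(y+te_j)$ a.e. Since $j$ was arbitrary, $u$ fulfils the hypotheses of Lemma \ref{lem:lemma2.12} with $V=\mathbb R$ (which has the RNP) and the bound $g=\rho$; thus $u\in W^1X(\Omega)$, and $|\partial_j u|\le\rho$ a.e. for every $j$ yields $|\nabla u|=\big(\sum_{j=1}^n|\partial_j u|^2\big)^{1/2}\le\sqrt n\,\rho$ a.e.

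\textbf{Part 2.} Under the stated hypotheses the Meyers--Serrin theorem applies, so choose $u_k\in C^\infty(\Omega)\cap W^1X(\Omega)$ with $u_k\to u$ in $W^1X$; we are free to pass to subsequences. Since each $u_k$ is smooth, the chain rule and the fundamental theorem of calculus give $|u_k(\gamma(0))-u_k(\gamma(l_\gamma))|\le\int_\gamma|\nabla u_k|\,ds$ for every rectifiable $\gamma$ parametrised by arc length, so a Borel representative of $|\nabla u_k|$ is an upper gradient of $u_k$. From $\big||\nabla u_k|-|\nabla u|\big|\le|\nabla(u_k-u)|$ and the lattice property (P1) we get $|\nabla u_k|\to|\nabla u|$ in $X(\Omega)$, so by Fuglede's lemma (Lemma \ref{lemma:Fuglede}), after a subsequence, $\int_\gamma|\nabla u_k|\,ds\to\int_\gamma|\nabla u|\,ds<\infty$ for $\Mod_X$-a.e. $\gamma$. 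On the other hand $\{u_k\}$ is a Cauchy sequence of smooth functions in $W^1X(\Omega)$, so by Theorem \ref{theorem:qe} a subsequence converges pointwise on $\Omega\setminus E$ for some $E$ with $\Capp_X(E)=0$; define $\tilde u$ to be this pointwise limit on $\Omega\setminus E$ and, say, $0$ on $E$. As $u_k\to u$ also in $X(\Omega)$, a further subsequence converges a.e. to $u$, whence $\tilde u=u$ a.e. and in particular $\tilde u\in X(\Omega)$. By Lemma \ref{lemma:cap-mod-zero}, $\Mod_X(\Gamma_E)=0$, so $\Mod_X$-a.e. curve stays in $\Omega\setminus E$ and carries the pointwise convergence $u_k\to\tilde u$. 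Hence for $\Mod_X$-a.e. $\gamma$, namely those avoiding both $\Mod_X$-null exceptional families, we may let $k\to\infty$ in $|u_k(\gamma(0))-u_k(\gamma(l_\gamma))|\le\int_\gamma|\nabla u_k|\,ds$ to obtain $|\tilde u(\gamma(0))-\tilde u(\gamma(l_\gamma))|\le\int_\gamma|\nabla u|\,ds$. Thus $\tilde u\in N^1X(\Omega)$, with a Borel representative of $|\nabla u|$ as an $X$-weak upper gradient.

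\textbf{Main obstacle.} The delicate part is the limiting argument of Part 2: one must upgrade the norm convergence $u_k\to u$ in $X(\Omega)$ to pointwise convergence along $\Mod_X$-a.e. curve, which is exactly where the capacity estimate (Theorem \ref{theorem:qe}) and the capacity-versus-modulus comparison (Lemma \ref{lemma:cap-mod-zero}) enter, while simultaneously controlling $\int_\gamma|\nabla u_k|\,ds$ via Fuglede's lemma; some bookkeeping is needed to intersect the finitely many $\Mod_X$-null exceptional families without losing the conclusion. In Part 1 the only subtle step is the passage from $\Mod_X(\Gamma_0^j)=0$ to a property valid for $\mu^{n-1}$-a.e. line parallel to $e_j$, supplied by the projection lemma, after which the reduction to $W^1X$ is handled by the ACL criterion of Lemma \ref{lem:lemma2.12}.
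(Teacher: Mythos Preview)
Your proposal is correct and follows essentially the same route as the paper: Part~1 reduces $\Mod_X$-a.e.\ control to $\mu^{n-1}$-a.e.\ lines via the projection/cylinder estimate and then invokes the ACL criterion (Lemma~\ref{lem:lemma2.12}), while Part~2 runs the Meyers--Serrin approximation through Fuglede's lemma together with Theorem~\ref{theorem:qe} and Lemma~\ref{lemma:cap-mod-zero} to pass to the limit. Your write-up simply supplies more detail (e.g.\ the explicit definition of $\tilde u$ and the a.e.\ identification with $u$) than the paper's terse version.
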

\begin{proof}
1) Let $u\in N^1X(\Omega)$ and $\rho\in X(\Omega)$ be a $X$-weak upper gradient of $u$.
Function $u$ is absolutely continuous on $\Mod_X$-a.e. curve $\gamma$ in $\Omega$.
Thanks to lemma \ref{lemma:cyl-est}, $u$ is absolutely continuous on almost all lines parallel to coordinate axes.
Moreover, $\left|\frac{\partial u}{\partial x_j}\right|\leq \rho$ a.e. on such lines.
Thus, applying lemma \ref{lem:lemma2.12} we infer that $u\in W^1X(\Omega)$.

2) Let $u\in W^1X(\Omega)$, then there is a sequence of smooth functions $\{u_k\}$ so that
$u_k\to u$ and $\nabla u_k \to \nabla u$ in $X(\Omega)$, as $k\to\infty$.
For any curve $\gamma$ we have
\[
|u_k(\gamma(0)) - u_k(\gamma(l_\gamma))| \leq \int_{\gamma} |\nabla u_k|\, ds.
\]
Choose a Borel representative of $|\nabla u|$, then, by Fuglede's lemma \ref{lemma:Fuglede}
\[
\int_\gamma |\nabla u_k|\, ds \to \int_\gamma |\nabla u|\, ds \quad \text{ as } k\to\infty
\]
holds for $\Mod_X$-almost every curve.
Furthermore, due to theorem \ref{theorem:qe}, we can assume that $u_k\to u$ 
pointwise, except a set $E$ of capacity zero. 
On the other hand, by lemma \ref{lemma:cap-mod-zero}, $X$-modulus of the family of curves that meet $E$ is zero.
Therefore, we can pass to the limit and obtain that 
\[
|u(\gamma(0)) - u(\gamma(l_\gamma))| \leq \int_{\gamma} |\nabla u|\, ds
\]
holds for $\Mod_X$-almost every curve.
\end{proof}

\begin{thm}
Let $\Omega$ be a domain in $\mathbb R^n$, $V$ be a Banach space, and $X(\Omega)$ be a Banach function space.

1) If $u\in N^1X(\Omega; V)$, then $u\in R^1X(\Omega; V)$ and 
$\sqrt{n}\rho$ is its Reshetnyak upper gradient, where
$\rho$ is arbitrary $X$-weak upper gradient of $u$.

2) Suppose norm $\|\cdot\|_{X}$ is absolutely continuous and has the translation inequality property.
If $u\in R^1X(\Omega;V)$,  then there is a representative  $\tilde u\in N^1X(\Omega;V)$ 
and as a $X$-weak upper gradient of $\tilde u$ one can choose a Borel representative of any  Reshetnyak upper gradient of $u$.
\end{thm}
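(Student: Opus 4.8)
The plan is to push everything down to the scalar statement of theorem~\ref{theorem:NinW} by pairing $u$ with functionals $v^*\in V^*$, exactly as in the passage between $W^1X$ and $R^1X$. For part~1, let $u\in N^1X(\Omega;V)$ and let $\rho$ be an $X$-weak upper gradient of $u$. For each $v^*\in V^*$ with $\|v^*\|\le 1$ the function $\langle v^*,u\rangle$ belongs to $X(\Omega)$ by (P1) (since $|\langle v^*,u\rangle|\le\|u\|_V$), and $\rho$ is an $X$-weak upper gradient of it because $|\langle v^*,u(\gamma(0))\rangle-\langle v^*,u(\gamma(l_\gamma))\rangle|\le\|u(\gamma(0))-u(\gamma(l_\gamma))\|_V\le\int_\gamma\rho\,ds$ for $\Mod_X$-a.e. curve $\gamma$. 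Hence $\langle v^*,u\rangle\in N^1X(\Omega)$, and the first part of theorem~\ref{theorem:NinW} gives $\langle v^*,u\rangle\in W^1X(\Omega)$ with $|\nabla\langle v^*,u\rangle|\le\sqrt n\,\rho$ a.e. Since $\rho$ does not depend on $v^*$, conditions (A) and (B) hold, $\sqrt n\,\rho$ is a Reshetnyak upper gradient of $u$, and $u\in R^1X(\Omega;V)$.

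For part~2 I would imitate the proof of lemma~\ref{lem:lemma2.13}. Fix a Reshetnyak upper gradient $g$ of $u$. By the Pettis theorem $u$ is essentially separably valued, so choose $\Sigma_0$ of measure zero with $u(\Omega\setminus\Sigma_0)$ separable, a countable set $\{v_i\}$ dense in $u(\Omega\setminus\Sigma_0)-u(\Omega\setminus\Sigma_0)$, and norming functionals $v_i^*$ with $\|v_i^*\|=1$ and $\langle v_i^*,v_i\rangle=\|v_i\|_V$. Each $\langle v_i^*,u\rangle$ lies in $W^1X(\Omega)$ with $|\nabla\langle v_i^*,u\rangle|\le g$, so the second part of theorem~\ref{theorem:NinW} --- here the absolute continuity of $\|\cdot\|_X$ and the translation inequality enter --- provides a representative $w_i\in N^1X(\Omega)$ of $\langle v_i^*,u\rangle$ for which a single Borel representative $\bar g$ of $g$, chosen to dominate Borel representatives of all the $|\nabla\langle v_i^*,u\rangle|$, is an $X$-weak upper gradient. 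Using the standard facts that the curves meeting the null set $Z:=\Sigma_0\cup\bigcup_i\{w_i\ne\langle v_i^*,u\rangle\}$ in positive length form a $\Mod_X$-null family, and that the curves having a subcurve in a $\Mod_X$-null family form a $\Mod_X$-null family, I may discard one $\Mod_X$-null family after which, on every remaining curve $\gamma$, one has $\int_\gamma\bar g\,ds<\infty$, $\gamma^{-1}(Z)$ of measure zero, and $|w_i(\gamma(s))-w_i(\gamma(t))|\le\int_{\gamma|_{[s,t]}}\bar g\,ds$ for all $i$ and all $0\le s\le t\le l_\gamma$.

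On such a $\gamma$, when $\gamma(s),\gamma(t)\notin Z$ the vector $u(\gamma(s))-u(\gamma(t))$ lies in $u(\Omega\setminus\Sigma_0)-u(\Omega\setminus\Sigma_0)$; approximating it by some $v_i$ and using $\|v_i\|_V=\langle v_i^*,v_i\rangle$ and $w_i=\langle v_i^*,u\rangle$ off $Z$ yields $\|u(\gamma(s))-u(\gamma(t))\|_V\le\int_s^t\bar g(\gamma(r))\,dr$. Since $\gamma^{-1}(\Omega\setminus Z)$ is co-null, hence dense, $r\mapsto u(\gamma(r))$ extends to an absolutely continuous map on $[0,l_\gamma]$, and the same density argument shows that for $x\in Z$ this limit is independent of the curve ending at $x$ (the $\langle v_i^*,\cdot\rangle$ separate the admissible limit values). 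Setting $\tilde u=u$ off $Z$ and $\tilde u(x)$ equal to this common limit otherwise produces a representative of $u$ with $\|\tilde u(\gamma(0))-\tilde u(\gamma(l_\gamma))\|_V\le\int_\gamma\bar g\,ds$ for $\Mod_X$-a.e. $\gamma$; thus $\tilde u\in N^1X(\Omega;V)$ with $X$-weak upper gradient $\bar g$. Alternatively one can invoke the curve-absolutely-continuous representative of an $R^1X$ function mentioned in the remark after lemma~\ref{lem:lemma2.13} and argue along it directly.

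The main obstacle is the bookkeeping in part~2: manufacturing one representative $\tilde u$ valid along $\Mod_X$-a.e. curve --- the representatives supplied by lemma~\ref{lem:lemma2.13} are tied to a single coordinate direction --- which forces one to combine countably many modulus-zero curve families, to transfer the scalar estimates for the $\langle v_i^*,u\rangle$ back to the $V$-norm via density of $\{v_i\}$, and to define $\tilde u$ consistently at the endpoints of curves lying in $Z$. Part~1 and the inequality $|\nabla\langle v^*,u\rangle|\le g$ are routine once theorem~\ref{theorem:NinW} is in hand.
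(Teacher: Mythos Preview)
Your proposal is correct and follows essentially the same approach as the paper: part~1 is identical (pair with $v^*$, invoke theorem~\ref{theorem:NinW} part~1, note the bound is uniform in $v^*$), and for part~2 the paper likewise applies theorem~\ref{theorem:NinW} part~2 to the scalar functions $\langle v^*,u\rangle$ and then says one should ``proceed as in the proof of lemma~\ref{lem:lemma2.13}'' together with the argument from \cite[p.~182--183]{HKST2015}. You have in fact supplied more of the bookkeeping (countable norming family, curve-wise extension, consistency at points of $Z$) than the paper does, but the skeleton is the same.
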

\begin{proof}
1) Let $\rho$ be a $X$-weak upper gradient of $u$.
For any $v^*\in V^*$ with $\|v^*\|\leq 1$ and curve $\gamma$, we have
\[
|\langle v^*, u \rangle(\gamma(0)) - \langle v^*, u \rangle(\gamma(l_\gamma))| 
\leq \|u(\gamma(0)) - u (\gamma(l_\gamma))\| 
\leq \int_{\gamma} \rho\, ds
\]
Therefore $\langle v^*, u \rangle \in N^1X(\Omega)$ with $X$-weak upper gradient $\rho$ not depending on $v^*$.
Due to theorem \ref{theorem:NinW}, $\langle v^*, u \rangle \in W^1X(\Omega)$ and $|\nabla\langle v^*, u \rangle| \leq \sqrt{d}\cdot\rho$.
So $u\in R^1X(\Omega; V)$. 

2) Let $u\in R^1X(\Omega; V)$ and $g\in X(\Omega)$ be its Reshetnyak upper gradient.
Then due to theorem \ref{theorem:NinW}, for any $v^*\in V^*$ with $\|v^*\|\leq 1$, 
function $\langle v^*, u \rangle$ has a representative in $N^1X(\Omega)$.
Moreover, a Borel representative of $g$ is a $X$-weak upper gradient for each of those representatives above (not depending on $v^*$).
Therefore, to construct the desired representative of $u$, we can proceed as in the proof of lemma \ref{lem:lemma2.13}
(also see the proof from \cite[p.182-183]{HKST2015}).

\end{proof}

\subsection{Description via difference quotients}
Here we extend the characterization of Sobolev spaces via difference quotients known for $L^p$-spaces to the case of Banach function spaces. 
For the real-valued case see \cite[Theorem 2.1.13]{Bogachev2010} and \cite[Proposition 9.3]{Brezis2011},
and for vector case see  \cite[Proposition 2.5.7]{HVVW2016} and \cite[Theorem 2.2]{AK2018}.
\begin{thm}\label{theorem:DQC1}
Let $\Omega\subset\mathbb R^n$ and $X(\Omega)$ have the Radon-Nikod\'ym property.
If $u\in X(\Omega)$ and there is a constant $C\in[0,\infty)$ such that
\begin{equation}\label{eq:theorem:DQC1:1}
\|\tau_{te_j}u - \tau_{se_j}u\|_{X(\omega)} \leq C|t-s|, \quad j\in{1,\dots, n}
\end{equation}
for all $\omega \Subset \Omega$ with $\max\{|t|, |s|\} < \dist(\omega,\partial\Omega)$,
then $u\in W^1X(\Omega)$ and $\|\nabla u\|_{X(\Omega)}\leq nC$.
\end{thm}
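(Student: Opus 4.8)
The plan is to transfer the classical $L^{p}$ difference–quotient argument to this setting, replacing the use of reflexivity by the Radon--Nikod\'ym property of $X(\Omega)$. First I would note that the assertion is local: it suffices to prove, for each coordinate $j$ and each open set $U\Subset\Omega$, that the weak derivative $\partial_{j}u$ exists on $U$, is represented by an element of $X(U)$, and satisfies $\|\partial_{j}u\|_{X(U)}\le C$. Indeed, the weak derivative is unique, so these local derivatives are restrictions of a single function $\partial_{j}u\in L^{1}_{loc}(\Omega)$; exhausting $\Omega$ by sets $U_{k}\nearrow\Omega$ and applying the Fatou property (P2) to $|\partial_{j}u|\chi_{U_{k}}\nearrow|\partial_{j}u|$ gives $\partial_{j}u\in X(\Omega)$ with $\|\partial_{j}u\|_{X}\le C$ for all $j$, whence $u\in W^{1}X(\Omega)$; and since $|\nabla u|=\bigl(\sum_{j}|\partial_{j}u|^{2}\bigr)^{1/2}\le\sum_{j}|\partial_{j}u|$ pointwise, the lattice property (P1) yields $\||\nabla u|\|_{X}\le nC$.

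Fix $j$ and $U\Subset\Omega$, put $\delta=\dist(U,\partial\Omega)$, and consider the curve $\Phi\colon(-\delta,\delta)\to X(\Omega)$, $\Phi(t)=(\tau_{te_{j}}u)\chi_{U}$, which restricts to $u(\cdot+te_{j})$ on $U$; hypothesis \eqref{eq:theorem:DQC1:1} (with $\omega=U$) says exactly that $\Phi$ is $C$-Lipschitz. After extending $\Phi$ to a $C$-Lipschitz map $\mathbb{R}\to X(\Omega)$, Proposition~\ref{prop:RNP} and the Radon--Nikod\'ym property of $X(\Omega)$ give that $\Phi$ is differentiable almost everywhere; set $g:=\Phi'$, so $\|g(\tau)\|_{X}\le C$ for a.e.\ $\tau$. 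Because (P4) gives a continuous embedding $X(\Omega)\hookrightarrow L^{1}(U)$, the restriction $\tau\mapsto g(\tau)|_{U}$ is a Bochner-integrable $L^{1}(U)$-valued function, and at every differentiability point $\tau$ the difference quotients $\frac{1}{s}\bigl(u(\cdot+(\tau+s)e_{j})-u(\cdot+\tau e_{j})\bigr)$ converge to $g(\tau)$ in $L^{1}(U)$ as $s\to0$. A change of variables together with Fubini's theorem then shows that the slices of $g$ fit together into a single function $v\in L^{1}_{loc}\bigl(U+(-\delta,\delta)e_{j}\bigr)$ with $[g(\tau)](y)=v(y+\tau e_{j})$ for a.e.\ $(y,\tau)$; in particular $\|v(\cdot+\tau e_{j})\chi_{U}\|_{X}=\|g(\tau)\|_{X}\le C$ for a.e.\ $\tau$.

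The main obstacle is to pass from this family of translated, uniformly $X$-bounded slices to the single statement $v\chi_{U}\in X(\Omega)$ with $\|v\chi_{U}\|_{X}\le C$: one cannot simply set $\tau=0$ (that value of $\Phi'$ need not exist), nor translate a good slice back to the origin, because $X(\Omega)$ is not assumed translation invariant. The device I would use is to average in $\tau$. For $0<\varepsilon<\delta$ the Bochner average $w_{\varepsilon}:=\frac{1}{2\varepsilon}\int_{-\varepsilon}^{\varepsilon}g(\tau)\,d\tau\in X(\Omega)$ satisfies $\|w_{\varepsilon}\|_{X}\le\frac{1}{2\varepsilon}\int_{-\varepsilon}^{\varepsilon}\|g(\tau)\|_{X}\,d\tau\le C$, and, since $f\mapsto\int_{A}f\,dx$ is a bounded functional on $X(\Omega)$ for every measurable $A\subset U$ (again by (P4)), $w_{\varepsilon}$ coincides a.e.\ on $U$ with the segment average $v_{\varepsilon}(y):=\frac{1}{2\varepsilon}\int_{-\varepsilon}^{\varepsilon}v(y+\tau e_{j})\,d\tau$. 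As $\varepsilon\to0$, one-dimensional Lebesgue differentiation gives $v_{\varepsilon}\to v$ a.e.\ on $U$ (and in $L^{1}_{loc}(U)$), hence $w_{\varepsilon}\chi_{U}\to v\chi_{U}$ a.e.\ on $\Omega$, and the Fatou lemma for Banach function spaces (Lemma~\ref{lemma:Fatou}) yields $v\chi_{U}\in X(\Omega)$ with $\|v\chi_{U}\|_{X}\le\liminf_{\varepsilon\to0}\|w_{\varepsilon}\|_{X}\le C$.

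It remains to identify $v$ as $\partial_{j}u$ on $U$. For $\varphi\in C_{0}^{\infty}(U)$ put $\gamma(\tau):=\int_{U}[g(\tau)]\varphi\,dx$; passing to the limit $s\to0$ in $\frac{1}{s}\int_{U}\bigl(u(\cdot+(\tau+s)e_{j})-u(\cdot+\tau e_{j})\bigr)\varphi\,dx$ (using the $L^{1}(U)$-convergence above) and changing variables shows $\gamma(\tau)=-\int_{\Omega}u(x+\tau e_{j})\,\partial_{j}\varphi(x)\,dx$, which is continuous in $\tau$, with $\gamma(0)=-\int_{\Omega}u\,\partial_{j}\varphi\,dx$. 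Therefore
\[
\int_{U}v\,\varphi\,dx=\lim_{\varepsilon\to0}\int_{U}v_{\varepsilon}\,\varphi\,dx=\lim_{\varepsilon\to0}\frac{1}{2\varepsilon}\int_{-\varepsilon}^{\varepsilon}\gamma(\tau)\,d\tau=\gamma(0)=-\int_{\Omega}u\,\partial_{j}\varphi\,dx,
\]
the first equality because $v_{\varepsilon}\to v$ in $L^{1}_{loc}(U)$ and the third because $\gamma$ is continuous. Hence $\partial_{j}u=v\in X(U)$ with $\|\partial_{j}u\|_{X(U)}\le C$, and by the reduction of the first paragraph the theorem follows. (Alternatively, once $\Phi$ is known to be differentiable a.e.\ one can extract an $ACL$-type representative $\tilde u$ of $u$ with $\partial_{j}\tilde u=v$ and invoke Lemma~\ref{lem:lemma2.12}; the averaging step above is precisely what places $v$ in $X$ with the sharp bound when $X$ is not translation invariant.)
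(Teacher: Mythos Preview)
Your argument is essentially correct and follows the same opening move as the paper: map $t\mapsto u(\cdot+te_{j})$ into $X$, use the RNP to get a.e.\ differentiability of this Lipschitz curve, and then place the pointwise limit in $X$ via Lemma~\ref{lemma:Fatou}. The difference lies in how the two proofs get from ``$\Phi'(\tau)$ exists for a.e.\ $\tau$'' to a bound on the \emph{untranslated} function on $\omega$.

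The paper's device is shorter than your averaging step and sidesteps the slice--compatibility issue you gloss over. Working on $\omega\Subset\omega'\Subset\Omega$, the paper simply picks \emph{one} $t_{0}$ with $0\le t_{0}<\dist(\omega,\partial\omega')$ at which $G'(t_{0})$ exists, passes to a subsequence $h_{k}\to0$ along which the difference quotients converge a.e.\ on $\omega'$, and observes that under the shift $x\mapsto x-t_{0}e_{j}$ this is precisely a.e.\ convergence of $\tfrac{u(x+h_{k}e_{j})-u(x)}{h_{k}}$ on $\omega$. Fatou then gives $g_{\omega}\in X(\omega)$ with $\|g_{\omega}\|_{X(\omega)}\le C$ directly; the weak-derivative identity is checked by pairing with $\varphi$ and using the $X(\omega')$-convergence at $t_{0}$. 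No global $v$, no compatibility of slices, no averaging.

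Your route works too, but two points deserve a sentence of justification that you did not supply. First, the assertion that the slices $g(\tau)$ assemble into a single $v$ with $[g(\tau)](y)=v(y+\tau e_{j})$ is exactly the statement that $g(\tau_{1})(y)=g(\tau_{2})(y+(\tau_{1}-\tau_{2})e_{j})$ a.e.\ on the overlap; this is true (both sides are $L^{1}$-limits of the same translated difference quotients), but ``a change of variables together with Fubini'' is too terse. Second, the Lebesgue-differentiation step $v_{\varepsilon}\to v$ a.e.\ needs that $\tau\mapsto v(y+\tau e_{j})$ is locally integrable for a.e.\ $y$, which follows from $\int_{-\varepsilon}^{\varepsilon}\|g(\tau)\|_{L^{1}(U)}\,d\tau<\infty$ and Fubini; again easy, but worth saying. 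Once these are filled in, your proof is complete; the paper's single-$t_{0}$ trick is just a cleaner way to reach the same conclusion without having to stitch the slices together.
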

\begin{proof}
Fix $j\in{1,\dots,n}$ and let $\omega\Subset\Omega$ be bounded.
First, we prove that weak derivatives of $u|_\omega$ exist in $X(\omega)$ and their norms are bounded by $C$.
Let $\omega\Subset\omega'\Subset\Omega$ and $0<\delta<\dist(\omega',\partial\Omega)$.
Consider function $G:(-\delta, \delta)\to X(\omega')$ defined by the rule $t\mapsto\tau_{te_j}u$.
By the assumption we have
\[
\|G(t)-G(s)\|_{X(\omega')} = \|\tau_{te_j}u - \tau_{se_j}u\|_{X(\omega)} \leq C|t-s|,
\]
meaning that $G$ is Lipschitz continuous. 
Due to the RNP of $X$, mapping $G$ is differentiable a.e.
Then fix $0\leq t_0<\dist(\omega, \partial\omega')$ so that 
\begin{equation}\label{eq:G'}
G'(t_0) = \lim_{h\to 0}\frac{u(\cdot + (t_0+h)e_j) - u(\cdot + t_0e_j)}{h}
\end{equation} 
exists in $X(\omega')$. Choose a sequence $h_k\to 0$ such that limit \eqref{eq:G'} exist a.e. in $\omega'$,
and in particular in $\omega-t_0e_j\subset\omega'$. 
For $x\in\omega$, we define 
\[
g_\omega(x) := \lim_{k\to\infty}\frac{u(x+h_ke_j) - u(x)}{h_k}.
\]
Then $g_\omega$ is measurable, and by lemma \ref{lemma:Fatou} 
$g_\omega\in X(\omega)$ with $\|g_\omega\|_{X(\omega)}\leq C$.
Denote $g^k_\omega(x) = \frac{u(x+h_ke_j) - u(x)}{h_k}$ and show that for any $\varphi\in C_0^\infty(\omega)$ the next equality holds 
\[
\lim_{k\to\infty}\int_\omega g^k_\omega(x)\varphi(x)\, dx = \int_\omega g_\omega(x)\varphi(x)\, dx
\]
Indeed:
\begin{multline*}
\bigg|\int_\omega g^k_\omega(x)\varphi(x)\, dx - \int_\omega g_\omega(x)\varphi(x)\, dx\bigg|
\leq \int_\omega |g^k_\omega(x)-g_\omega(x)|\cdot|\varphi(x)|\, dx\\
= \int_{\omega-t_0e_j} |g^k_\omega(y+t_0e_j)-G'(t_0)(y)|\cdot|\varphi(y+t_0e_j)|\, dy\\
\leq \bigg\|\frac{u(\cdot + (t_0+h_k)e_j) - u(\cdot + t_0e_j)}{h_k} - G'(t_0) \bigg\|_{X(\omega')}\|\varphi(\cdot +t_0e_j)\|_{X'(\omega')}
\to 0. 
\end{multline*}
We deduce that $g$ is a weak derivative:
\begin{multline*}
\int_\omega g_\omega(x)\varphi(x)\, dx
= \lim_{k\to\infty}\int_\omega \frac{u(x+h_ke_j) - u(x)}{h_k}\varphi(x)\, dx \\
= \lim_{k\to\infty}\int_\omega \frac{\varphi(x+h_ke_j) - \varphi(x)}{h_k}u(x)\, dx
= \int_\omega u(x)\frac{\partial\varphi}{\partial x_j}(x)\, dx.
\end{multline*}
Now we take a monotone sequence of bounded domains 
$\omega_n\Subset\omega_{n+1}\Subset\Omega$
such that $\bigcup_{n}\omega_n = \Omega$.
Functions $g_{\omega_n}$ agree on the intersections
of their supports; therefore, they can be pieced together to a globally
defined measurable function $g$. 
Again, thanks to lemma \ref{lemma:Fatou}, $g \in X(\Omega)$ and $\|g_\omega\|_{X(\Omega)}\leq C$.  
In the same manner as above, we derive that $g = \partial_j u$ on $\Omega$. 
\end{proof}

\begin{thm}\label{theorem:DQC2}
Let $\Omega\subset\mathbb R^n$, $V$ be a Banach space, and $X(\Omega)$ has the Radon-Nikod\'ym property.
If $u\in X(\Omega;V)$ and there is a constant $C\in[0,\infty)$ such that
\[
\|\tau_{te_j}u - \tau_{se_j}u\|_{X(\omega;V)} \leq C|t-s|, \quad j\in{1,\dots, n}
\]
for all $\omega \Subset \Omega$ with $\max\{|t|, |s|\} < \dist(\omega,\partial\Omega)$,
then $u\in R^1X(\Omega; V)$, and there is $g$ a Reshetnyak upper gradient of $u$ so that
$\|g\|_{X(\Omega)}\leq nC$.
\end{thm}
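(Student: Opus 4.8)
The plan is to reduce the statement to the scalar criterion of Theorem~\ref{theorem:DQC1} by testing $u$ against functionals, and then to assemble a single Reshetnyak upper gradient out of vector-valued difference quotients taken along one fixed sequence.

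First I would dispose of condition~(A) in the definition of $R^1X(\Omega;V)$. For $v^*\in V^*$ with $\|v^*\|\le1$ the pointwise bound $|\langle v^*,u(x)\rangle|\le\|u(x)\|_V$ together with the lattice property (P1) give $\langle v^*,u\rangle\in X(\Omega)$ and
\[
\|\tau_{te_j}\langle v^*,u\rangle-\tau_{se_j}\langle v^*,u\rangle\|_{X(\omega)}\le\|\tau_{te_j}u-\tau_{se_j}u\|_{X(\omega;V)}\le C|t-s|
\]
for all admissible $\omega,t,s$, so $\langle v^*,u\rangle$ meets the hypothesis of Theorem~\ref{theorem:DQC1} with the same constant, and hence $\langle v^*,u\rangle\in W^1X(\Omega)$. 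That is exactly condition~(A).

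Next I would build the upper gradient. Fix once and for all a sequence $h_k\to0$ with $h_k\neq0$, put $w^{(j)}_k(x)=h_k^{-1}(u(x+h_ke_j)-u(x))$, and set $\Lambda_j(x)=\liminf_{k\to\infty}\|w^{(j)}_k(x)\|_V$ and $g=(\sum_{j=1}^n\Lambda_j^2)^{1/2}$. These are measurable, and on any $\omega\Subset\Omega$ the hypothesis (with $s=0$) gives $\|\,\|w^{(j)}_k(\cdot)\|_V\,\|_{X(\omega)}\le C$ for $k$ large; since $\inf_{k\ge K}\|w^{(j)}_k(\cdot)\|_V\nearrow\Lambda_j$ on $\omega$, the Fatou property (P2) yields $\|\Lambda_j\|_{X(\omega)}\le C$, and exhausting $\Omega$ by $\omega_m\nearrow\Omega$ and using (P2) once more gives $\Lambda_j\in X(\Omega)$ with $\|\Lambda_j\|_{X(\Omega)}\le C$, so $g\in X(\Omega)$ (as $g\le\sum_j\Lambda_j$) with $\|g\|_{X(\Omega)}\le nC$. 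It then remains to check that $g$ is a Reshetnyak upper gradient, i.e. $|\partial_j\langle v^*,u\rangle|\le\Lambda_j$ a.e.\ for every $v^*$ in the unit ball and every $j$. For this I would invoke Theorem~\ref{theorem:W-ACL}: since $\langle v^*,u\rangle\in W^1X(\Omega)\subset W^{1,1}_{loc}(\Omega)$, it has a representative $\phi$ that is absolutely continuous and differentiable along a.e.\ line parallel to the $x_j$-axis, with $\partial_j\phi=\partial_j\langle v^*,u\rangle$ a.e. Writing $N_0=\{\phi\neq\langle v^*,u\rangle\}$ and $N=N_0\cup\bigcup_k(N_0-h_ke_j)$ — still null, as a countable union of translates of a null set — one has, for a.e.\ $x\notin N$ at which $\phi$ is differentiable in $x_j$, that $\phi(x)=\langle v^*,u(x)\rangle$ and $\phi(x+h_ke_j)=\langle v^*,u(x+h_ke_j)\rangle$ for all $k$, so
\[
\partial_j\langle v^*,u\rangle(x)=\lim_{h\to0}\frac{\phi(x+he_j)-\phi(x)}{h}=\lim_{k\to\infty}\langle v^*,w^{(j)}_k(x)\rangle ,
\]
and therefore $|\partial_j\langle v^*,u\rangle(x)|=\liminf_k|\langle v^*,w^{(j)}_k(x)\rangle|\le\liminf_k\|w^{(j)}_k(x)\|_V=\Lambda_j(x)$. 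Condition~(B) follows, and with it $u\in R^1X(\Omega;V)$ and $\|g\|_{X(\Omega)}\le nC$.

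The main obstacle I anticipate is precisely the uniformity in $v^*$: the exceptional null set $N_0$ on which the $ACL$ representative of $\langle v^*,u\rangle$ deviates from $\langle v^*,u\rangle$ depends on $v^*$, so a naive pointwise argument would only produce a $v^*$-dependent bound. Fixing the sequence $\{h_k\}$ in advance is exactly what makes $\bigcup_k(N_0-h_ke_j)$ harmless, and it is also the reason one must use a.e.\ differentiability of the $ACL$ representative — so that the derivative can be recovered as the honest limit of difference quotients along the prescribed sequence — rather than only the subsequential description of $\partial_j\langle v^*,u\rangle$ coming from the proof of Theorem~\ref{theorem:DQC1}.
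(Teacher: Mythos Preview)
Your proof is correct and follows the same strategy as the paper's: reduce condition~(A) to Theorem~\ref{theorem:DQC1} by testing against functionals, then build the Reshetnyak gradient as $(\sum_j g_j^2)^{1/2}$ with each $g_j$ a $\liminf$ of norms of difference quotients and invoke Fatou to get $\|g_j\|_X\le C$. The one substantive difference is that you fix a countable sequence $\{h_k\}$ in advance, whereas the paper takes $g_j(x)=\liminf_{h\to0}\|u(x+he_j)-u(x)\|_V/|h|$ over all $h$; your choice, together with the explicit appeal to the $ACL$ representative from Theorem~\ref{theorem:W-ACL} and the null set $N_0\cup\bigcup_k(N_0-h_ke_j)$, actually closes a small gap the paper's display leaves open --- namely, the equality $|\partial_j\langle v^*,u\rangle(x)|=\lim_{h\to0}|\langle v^*,u(x+he_j)\rangle-\langle v^*,u(x)\rangle|/|h|$ need not hold for \emph{every} $h\to0$ once one distinguishes $\langle v^*,u\rangle$ from its $ACL$ representative, so bounding by the full $\liminf_{h\to0}$ (rather than the $\liminf$ along a prescribed sequence avoiding the bad translates) is not immediate.
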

\begin{proof}
For any $v^*\in V^*$ with $\|v^*\|\leq 1$, it is clear that $\langle v^*, u \rangle \in X(\Omega)$.
Have the following estimate
\begin{multline*}
|\tau_{te_j}\langle v^*, u \rangle(x) - \tau_{se_j}\langle v^*, u \rangle(x)|
= |\langle v^*, \tau_{te_j}u(x) \rangle - \langle v^*, \tau_{se_j}u(x) \rangle|\\
= |\langle v^*, \tau_{te_j}u(x) -  \tau_{se_j}u(x) \rangle|
\leq \|\tau_{te_j}u(x) - \tau_{se_j}u(x)\|_{V}.
\end{multline*}
Then, for any $\omega \Subset \Omega$ with $\max\{|t|, |s|\} < \dist(\omega,\partial\Omega)$, we have
\[
\|\tau_{te_j}\langle v^*, u \rangle - \tau_{se_j}\langle v^*, u \rangle\|_{X(\omega)}
\leq \|\tau_{te_j}u - \tau_{se_j}u\|_{X(\omega;V)} 
\leq C|t-s|.
\]
Thus, all the assumptions of theorem \ref{theorem:DQC1}  are fulfilled. 
So $\langle v^*, u \rangle \in W^1X(\Omega)$.
Now, find a majorant. Define 
\[
g_j(x) := \liminf_{h\to 0}\frac{\|u(x+he_j) - u(x) \|_V}{|h|},
\] 
which belongs to $X(\Omega)$ and $\|g_j\|_{X(\Omega)}\leq C$ (due to lemma \ref{lemma:Fatou}). 
Applying the next estimate 
\[
\frac{|\langle v^*, u(x+h) \rangle - \langle v^*, u(x) \rangle|}{|h|}
\leq \frac{\|u(x+he_j) - u(x) \|_V}{|h|},
\]
we derive that
\begin{multline*}
|\partial_j\langle v^*, u \rangle(x)| 
=\lim_{h\to 0}\frac{|\langle v^*, u(x+h) \rangle - \langle v^*, u(x) \rangle|}{|h|}\\
\leq  \liminf_{h\to 0}\frac{\|u(x+he_j) - u(x) \|_V}{|h|}
= g_j(x).
\end{multline*}
So $g = \sqrt{\sum g_j^2}$ is a Reshetnyak upper gradient of $u$, and the estimate 
$\|g\|_{X(\Omega)} \leq \sum\|g_j\|_{X(\Omega)} \leq nC$ holds true.
\end{proof}

\begin{thm}
Let $\Omega\subset\mathbb R^n$, $V$ be a Banach space, and $X(\Omega)$ be a Banach function space.

1) Suppose norm $\|\cdot\|_{X}$ is absolutely continuous and has the translation inequality property.
If $u\in W^1X(\Omega;V)$, then
\begin{equation}\label{eq:theorem:DQC3:1}
\|\tau_{te_j}u - \tau_{se_j}u\|_{X(\omega;V)} \leq \|\partial_ju\|_{X(\Omega;V)}|t-s|, \quad j\in{1,\dots, n}
\end{equation}
for all $\omega \Subset \Omega$ with $\max\{|t|, |s|\} < \dist(\omega,\partial\Omega)$.
 
2) Suppose $X$ and $V$ have the Radon-Nikod\'ym property. 
If $u\in X(\Omega;V)$ and there is a constant $C\in[0,\infty)$ such that
\begin{equation}\label{eq:theorem:DQC3:2}
\|\tau_{te_j}u - \tau_{se_j}u\|_{X(\omega;V)} \leq C|t-s|, \quad j\in{1,\dots, n}
\end{equation}
for all $\omega \Subset \Omega$ with $\max\{|t|, |s|\} < \dist(\omega,\partial\Omega)$,
then $u\in W^1X(\Omega; V)$ and $\|\nabla u\|_{X(\Omega)}\leq nC$.
\end{thm}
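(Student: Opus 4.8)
The plan for part~1) is to prove the estimate first for smooth functions and then invoke the Meyers--Serrin theorem. If $u\in C^\infty(\Omega;V)\cap W^1X(\Omega;V)$, then for $x\in\omega$ and, say, $s\le t$ with $\max\{|t|,|s|\}<\dist(\omega,\partial\Omega)$ the fundamental theorem of calculus for $V$-valued functions gives
\[
\tau_{te_j}u(x)-\tau_{se_j}u(x)=u(x+te_j)-u(x+se_j)=\int_s^t\partial_j u(x+\sigma e_j)\,d\sigma ,
\]
so that $\|\tau_{te_j}u(x)-\tau_{se_j}u(x)\|_V\le\int_s^t\|\partial_j u(x+\sigma e_j)\|_V\,d\sigma$. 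Taking the $X(\omega)$-norm, applying Minkowski's integral inequality \eqref{eq:MInkowskii}, and then the lattice and translation inequality properties (which bound $\big\|\,\|\partial_j u(\cdot+\sigma e_j)\|_V\,\big\|_{X(\omega)}$ by $\|\partial_j u\|_{X(\Omega;V)}$) yields \eqref{eq:theorem:DQC3:1} for smooth $u$. For general $u\in W^1X(\Omega;V)$ the Meyers--Serrin theorem --- available since $\|\cdot\|_X$ is absolutely continuous and has the translation inequality property --- furnishes $u_k\in C^\infty(\Omega;V)\cap W^1X(\Omega;V)$ with $u_k\to u$ in $W^1X$; then $\partial_j u_k\to\partial_j u$ in $X(\Omega;V)$ and $\tau_{re_j}u_k\to\tau_{re_j}u$ in $X(\omega;V)$ for $r\in\{s,t\}$ (translation inequality once more), so the estimate for $u_k$ passes to the limit.

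For part~2) the plan is to first place $u$ in the Reshetnyak--Sobolev space using the Radon--Nikod\'ym property of $X$, and then cross over to $W^1X$ using that of $V$. Since $X$ has the Radon--Nikod\'ym property and $u$ obeys \eqref{eq:theorem:DQC3:2}, Theorem \ref{theorem:DQC2} applies and yields $u\in R^1X(\Omega;V)$; inspecting its proof, one may take as a Reshetnyak upper gradient $g=\sqrt{\sum_j g_j^2}$ with $g_j(x)=\liminf_{h\to 0}\|u(x+he_j)-u(x)\|_V/|h|$, where $\|g_j\|_{X(\Omega)}\le C$ and, for every $v^*\in V^*$ with $\|v^*\|\le1$, $|\partial_j\langle v^*,u\rangle|\le g_j$ a.e. Since $V$ has the Radon--Nikod\'ym property, part~2) of Theorem \ref{thm:W=R} gives $u\in W^1X(\Omega;V)$, and by part~1) of the same theorem $\partial_j\langle v^*,u\rangle=\langle v^*,\partial_j u\rangle$ a.e.

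To recover the sharp bound $\|\nabla u\|_{X(\Omega)}\le nC$, note that $u$ and the $\partial_j u$, being strongly measurable, take values --- outside a common null set --- in a fixed closed separable subspace $V_0\subset V$; fix a countable set $\{v^*_i\}$ with $\|v^*_i\|\le 1$ that is norming for $V_0$. For each $i$ and $j$ the inequality $|\langle v^*_i,\partial_j u(x)\rangle|=|\partial_j\langle v^*_i,u\rangle(x)|\le g_j(x)$ holds off a null set; taking the union over $i$ of these null sets yields $\|\partial_j u(x)\|_V=\sup_i|\langle v^*_i,\partial_j u(x)\rangle|\le g_j(x)$ for a.e.\ $x$, whence $\|\partial_j u\|_{X(\Omega;V)}\le\|g_j\|_{X(\Omega)}\le C$. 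Since $|\nabla u|=\sqrt{\sum_j\|\partial_j u\|_V^2}\le\sum_j\|\partial_j u\|_V$, the lattice property and the triangle inequality give $\big\|\,|\nabla u|\,\big\|_{X(\Omega)}\le\sum_j\|\partial_j u\|_{X(\Omega;V)}\le nC$.

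Part~1) is routine. The one delicate point, and the likeliest place for a slip, is the constant bookkeeping in part~2): a black-box combination of Theorems \ref{theorem:DQC2} and \ref{thm:W=R} would lose the factor, so one must retain the directional estimates $\|\partial_j u\|_V\le g_j$; this forces one to handle the uncountable family of functionals $v^*$ by passing to a countable norming set and to collate carefully all the exceptional null sets --- one per $v^*_i$, one per coordinate direction, and those issuing from the a.e.\ assertions of Theorems \ref{theorem:DQC2} and \ref{thm:W=R}.
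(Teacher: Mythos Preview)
Your argument is correct and follows the paper's route. Part~1) is identical: fundamental theorem of calculus on smooth functions, Minkowski's integral inequality, translation inequality, then density via Meyers--Serrin. For part~2) the paper simply writes ``It is a consequence of theorems \ref{thm:W=R} and \ref{theorem:DQC2}''; you invoke exactly these two results, but go further by extracting from the proof of Theorem~\ref{theorem:DQC2} the directional majorants $g_j$ and then running a countable-norming-set argument to upgrade $|\partial_j\langle v^*,u\rangle|\le g_j$ to $\|\partial_j u\|_V\le g_j$ a.e. This extra step is what actually secures the constant $nC$; the black-box norm inequality $\|u\|_{W^1X}\le\sqrt{n}\,\|u\|_{R^1X}$ in Theorem~\ref{thm:W=R} would only yield $n^{3/2}C$, so your care here is warranted and not supplied in the paper's one-line proof. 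A slightly shorter alternative, once you know $u\in W^1X$, is to note that on a.e.\ line parallel to $e_j$ the representative from Lemma~\ref{lem:lemma2.13} is differentiable (RNP of $V$), so the $\liminf$ defining $g_j$ is in fact a limit equal to $\|\partial_j u\|_V$ a.e.; this bypasses the norming-set step, but your version is equally valid.
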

\begin{proof}
1) By the density it is sufficient to consider $u\in C^{\infty}(\Omega; V)\cap W^1X(\Omega;V)$.
Then,
\[
u(x+te_j) - u(x+se_j) = \int_s^t \frac{d}{dr}u(x+re_j)\, dr = \int_s^t \frac{\partial}{\partial x_j}u(x+re_j)\, dr. 
\] 
Applying Minkowski's inequality \eqref{eq:MInkowskii} 
and then the translation inequality property, we derive \eqref{eq:theorem:DQC3:1}. 

2) It is a consequence of theorems \ref{thm:W=R} and  \ref{theorem:DQC2}
\end{proof}

In \cite{AK2018}, W. Arendt and M. Kreuter and obtained the following characterization of the Radon-Nikod\'ym property:
\textit{A Banach space $V$ has the RNP  iff the difference quotient criterion \eqref{eq:theorem:DQC3:2} characterizes the space $W^{1,p}(\Omega; V)$, $p\in(1,\infty]$}.
We are interested whether there exists such kind of property for a base space $X(\Omega)$. 
Namely, we suppose that the following would be reasonable.
\begin{con}
If the difference quotient criterion \eqref{eq:theorem:DQC1:1} characterizes the space $W^{1}X(\Omega)$, then
a Banach function space $X(\Omega)$ has the Radon-Nikod\'ym property.
\end{con}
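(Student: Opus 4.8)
The natural strategy is contraposition: assuming that $X(\Omega)$ lacks the Radon--Nikod\'ym property, one tries to produce a function $u\in X(\Omega)$ for which the bound \eqref{eq:theorem:DQC1:1} holds in every coordinate direction with some constant $C$ but $u\notin W^1X(\Omega)$, so that \eqref{eq:theorem:DQC1:1} cannot characterize $W^1X(\Omega)$. By Proposition \ref{prop:RNP} the failure of the RNP supplies a locally Lipschitz map $f\colon\mathbb R\to X(\Omega)$ that fails to be differentiable on a set of positive measure, and the whole task is to encode such an $f$ into a single scalar function on $\Omega$.

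It is convenient to first reformulate \eqref{eq:theorem:DQC1:1} along the lines of the proof of Theorem \ref{theorem:DQC1}. Fix $j$ and a bounded $\omega'\Subset\Omega$; for $u\in X(\Omega)$ the assignment $t\mapsto\tau_{te_j}u|_{\omega'}$ is a curve in $X(\omega')$, which is again a Banach function space and inherits the RNP from $X(\Omega)$, being the range of the norm-one projection $g\mapsto g\chi_{\omega'}$. The estimate \eqref{eq:theorem:DQC1:1} in the direction $e_j$ says exactly that all these curves are locally Lipschitz, and, unwinding the argument of Theorem \ref{theorem:DQC1}, if \eqref{eq:theorem:DQC1:1} holds then $u\in W^1X(\Omega)$ \emph{iff} each of the curves $t\mapsto\tau_{te_j}u$ is a.e.\ differentiable in $X(\omega')$. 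Thus ``\eqref{eq:theorem:DQC1:1} characterizes $W^1X(\Omega)$'' is equivalent to the assertion that every locally Lipschitz curve of the special form $t\mapsto\tau_{te_j}u$ (with $u\in X(\Omega)$, $1\le j\le n$) is a.e.\ differentiable, and the conjecture reduces to the implication ``$X$ is differentiable along all translation curves $\Rightarrow$ $X$ has the RNP.''

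To establish this implication one must promote a.e.\ differentiability of the thin family of translation curves to a.e.\ differentiability of an \emph{arbitrary} Lipschitz curve $f\colon\mathbb R\to X(\Omega)$. Two lines of attack suggest themselves: (i) take a martingale (``bush'') in $X(\Omega)$ witnessing the failure of the RNP and try to realize the increments of the associated nowhere-differentiable Lipschitz curve by translating a fixed function along one axis; (ii) use a representation of $X$ relative to the Calder\'on couple $(L^1,L^\infty)$ (Boyd indices) to reduce to model spaces on which the translation action is transparent. In either case one is attempting to match the rigid one-parameter translation group on $\Omega$ to an abstract differentiation failure living inside $X$.

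The main obstacle is precisely this matching step: the map $u\mapsto(t\mapsto\tau_{te_j}u)$ is very far from surjective onto the Lipschitz curves into $X(\Omega)$, so a.e.\ differentiability along translation curves is a priori strictly weaker than the RNP, and no transfer mechanism between the two is evident. This is in sharp contrast with the value-space analogues---the Arendt--Kreuter criterion and Theorems \ref{thm:W=R}--\ref{thm:RNP}---where one tests with $u(x)=f(x_1)$ and then recovers $f$ as the indefinite Bochner integral of its weak derivative $\partial_1u$, which is automatically differentiable a.e.\ by the Lebesgue differentiation theorem valid in \emph{every} Banach space; here $X$ plays the role of the \emph{base} space rather than of the target, so this device is unavailable. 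Finally, the conjecture as literally stated appears to require an additional hypothesis such as absolute continuity of $\|\cdot\|_X$: for $X=L^\infty(\Omega)$ with $|\Omega|<\infty$ the bound \eqref{eq:theorem:DQC1:1} already forces a Lipschitz representative and hence does characterize $W^1X(\Omega)=W^{1,\infty}(\Omega)$, even though $L^\infty$ fails the RNP.
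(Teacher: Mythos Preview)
The statement you were asked about is not a theorem in the paper but an explicit \emph{conjecture}; the paper offers no proof and merely remarks afterwards that ``at least for $L^p$-spaces, it is true.'' So there is nothing to compare your argument against: the author does not claim to know how to establish it.

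Your write-up is appropriately framed as a discussion rather than a proof, and it goes well beyond what the paper says. Your reformulation via the translation curves $t\mapsto\tau_{te_j}u$ is exactly the mechanism behind Theorem~\ref{theorem:DQC1}, and your identification of the core difficulty---that differentiability along this thin one-parameter family of curves need not propagate to differentiability of arbitrary Lipschitz curves into $X$---pinpoints precisely why the question is open. The contrast you draw with the Arendt--Kreuter/Theorem~\ref{thm:RNP} situation, where $V$ is the \emph{target} and one can simply test with $u(x)=f(x_1)$, is the right diagnosis of why the base-space version is harder.

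Your final observation deserves emphasis: for $X=L^\infty(\Omega)$ on a bounded domain the difference quotient bound \eqref{eq:theorem:DQC1:1} does force a Lipschitz representative and hence does characterize $W^{1,\infty}(\Omega)$, yet $L^\infty$ fails the RNP. This is a genuine counterexample to the conjecture as literally stated, and it suggests that the paper's remark ``at least for $L^p$-spaces, it is true'' tacitly excludes $p=\infty$ (the case $p=1$ is fine, since there the criterion only yields $BV$). So the conjecture, if it is to be salvageable, needs an extra hypothesis such as absolute continuity of the norm---a point the paper does not make.
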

At least for $L^p$-spaces, it is true.	

\subsection{A maximal function characterization}
Another fruitful observation consist in pointwise description of Sobolev functions via maximal function.

\begin{thm}\label{theorem:Maximal}
Let $\Omega\subset\mathbb R^n$, $V$ be a Banach space, $X(\Omega)$ have the Radon-Nikod\'ym property,
and norm $\|\cdot\|_{X(\Omega)}$ have the translation inequality property.
If $u\in X(\Omega;V)$ and there is a non-negative function $h\in X(\Omega)$ such that
\begin{equation}\label{eq:Maximal}
\|u(x) - u(y)\|_{V} \leq |x-y|(h(x)+h(y)), \quad \text{ a.e. on }  \Omega, 
\end{equation}
then $u\in R^1X(\Omega; V)$ and $\|g\|_{X(\Omega)}\leq 2n\|h\|_{X(\Omega)}$, 
where $g$ is some Reshetnyak upper gradient of $u$.
\end{thm}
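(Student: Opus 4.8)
The plan is to reduce the statement to the difference quotient criterion of Theorem~\ref{theorem:DQC2}, applied with the constant $C = 2\|h\|_{X(\Omega)}$. First I would record some regularity coming from the Banach function space structure: by property (P4) both $u\in X(\Omega;V)$ and $h\in X(\Omega)$ are locally integrable, so $h$ is finite almost everywhere, almost every point of $\Omega$ is a Lebesgue point of $h$, and by Fubini's theorem there is a set $E\subset\Omega$ of full measure such that for every $x\in E$ the inequality \eqref{eq:Maximal} holds for almost every $y\in\Omega$.

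Next I would fix $j\in\{1,\dots,n\}$, a bounded set $\omega\Subset\Omega$, and $t,s$ with $\max\{|t|,|s|\}<\dist(\omega,\partial\Omega)$, and estimate $\|\tau_{te_j}u - \tau_{se_j}u\|_{X(\omega;V)}$. For almost every $x\in\omega$ both translates $x+te_j$ and $x+se_j$ belong to $E$ and the midpoint $m=x+\tfrac{t+s}{2}e_j$ is a Lebesgue point of $h$ (each exceptional set is a translate of a null set). For such $x$ and small $r>0$ the triangle inequality gives
\[
\|u(x+te_j)-u(x+se_j)\|_V \le \frac{1}{|B(m,r)|}\int_{B(m,r)}\big(\|u(x+te_j)-u(y)\|_V + \|u(y)-u(x+se_j)\|_V\big)\,dy,
\]
and for almost every $y\in B(m,r)$ the integrand is bounded, via \eqref{eq:Maximal}, by $|x+te_j-y|(h(x+te_j)+h(y)) + |x+se_j-y|(h(x+se_j)+h(y))$. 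Letting $r\to0$ and using $|x+te_j-m|=|x+se_j-m|=\tfrac12|t-s|$ together with Lebesgue differentiation at $m$, I obtain the pointwise a.e.\ bound on $\omega$
\[
\|\tau_{te_j}u - \tau_{se_j}u\|_V \le \frac{|t-s|}{2}\big(\tau_{te_j}h + \tau_{se_j}h + 2\,\tau_{\frac{t+s}{2}e_j}h\big).
\]
Taking the $X(\omega)$-norm, invoking the lattice property (P1), and then the translation inequality property to dominate each translated copy of $h$ by $\|h\|_{X(\Omega)}$, the right-hand side is at most $2\|h\|_{X(\Omega)}\,|t-s|$, uniformly in $j$, $\omega$, $t$, $s$. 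Since $X(\Omega)$ has the Radon-Nikod\'ym property, Theorem~\ref{theorem:DQC2} then yields $u\in R^1X(\Omega;V)$ with a Reshetnyak upper gradient $g$ satisfying $\|g\|_{X(\Omega)}\le nC=2n\|h\|_{X(\Omega)}$, which is the claim.

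The one step that I expect to require genuine care rather than routine manipulation is exactly this first reduction: the hypothesis \eqref{eq:Maximal} constrains $u$ only on a full-measure subset of $\Omega\times\Omega$ and a priori says nothing along any individual coordinate line, so one cannot simply put $y=x+se_j$. Inserting the averaging point near the midpoint $m$—where the two endpoint inequalities are simultaneously available for a.e.\ $y$ and where Lebesgue differentiation controls the mean of $h$—is what legitimately produces the difference quotient bound; and the translation inequality property (which, notably, is not needed in Theorem~\ref{theorem:DQC2} itself) enters precisely to keep the three translated copies of $h$ under control in the norm of $X$.
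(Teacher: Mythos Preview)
Your proof is correct and follows the paper's strategy: establish the difference-quotient bound $\|\tau_{te_j}u-\tau_{se_j}u\|_{X(\omega;V)}\le 2\|h\|_{X}\,|t-s|$ and then invoke Theorem~\ref{theorem:DQC2} with $C=2\|h\|_X$. The paper's own argument is a two-line computation that reads \eqref{eq:Maximal} directly as $\|\tau_{te_j}u-\tau_{se_j}u\|_V\le|t-s|(\tau_{te_j}h+\tau_{se_j}h)$ a.e.\ on $\omega$ and then applies the translation inequality; in other words it tacitly uses the standard Haj{\l}asz reading of \eqref{eq:Maximal}, namely that there is a single null set $N\subset\Omega$ with the inequality valid for all $x,y\in\Omega\setminus N$ (translates of $N$ remain null, so plugging $y=x+(s-t)e_j$ is harmless). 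Your midpoint averaging and Lebesgue-differentiation step is the honest way to treat the weaker product-measure interpretation of ``a.e.\ on $\Omega$'', at the price of a third translated copy of $h$, which the translation inequality absorbs with the same final constant $2n\|h\|_{X}$. So the two proofs coincide in architecture; yours is simply more robust to how the hypothesis is read.
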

\begin{proof}
For all $j=1,\dots,n$ and any $\omega \Subset \Omega$ with $\max\{|t|, |s|\} < \dist(\omega,\partial\Omega)$,
taking into account the translation inequality property, we deduce 
\begin{multline*}
\|\tau_{te_j}u - \tau_{se_j}u\|_{X(\omega;V)} \leq |t-s|\cdot\|\tau_{te_j}h + \tau_{se_j}h\|_{X(\omega)}\\ 
\leq  |t-s|\cdot 2\|h\|_{X(\Omega)}.
\end{multline*}
By theorem \ref{theorem:DQC2}, we conclude that $u\in R^1X(\Omega; V)$. 
\end{proof}

\begin{cor}\label{cor:Maximal}
In the assumptions of theorem \ref{theorem:Maximal} suppose that $V$ has the Radon-Nikod\'ym property.
Then it follows that $u\in W^1X(\Omega; V)$ and $\|\nabla u\|_{X(\Omega)}\leq 2n\|g\|_{X(\Omega)}$.
\end{cor}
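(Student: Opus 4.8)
The plan is to read the corollary off as a direct composition of Theorem~\ref{theorem:Maximal} with the $R^1X$-to-$W^1X$ passage of Theorem~\ref{thm:W=R}. First I would apply Theorem~\ref{theorem:Maximal}: since $X(\Omega)$ enjoys the Radon--Nikod\'ym property and $\|\cdot\|_{X(\Omega)}$ has the translation inequality property, the pointwise bound \eqref{eq:Maximal} already yields $u\in R^1X(\Omega;V)$ together with a Reshetnyak upper gradient $g$ satisfying $\|g\|_{X(\Omega)}\le 2n\|h\|_{X(\Omega)}$. This is the only place the hypothesis \eqref{eq:Maximal} is used.

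Second, now using the additional assumption that $V$ has the Radon--Nikod\'ym property, I would apply part~2) of Theorem~\ref{thm:W=R} to upgrade membership from $R^1X(\Omega;V)$ to $W^1X(\Omega;V)$. The gradient estimate comes out of the mechanism of that step: Lemma~\ref{lem:lemma2.13} produces, for each $j\in\{1,\dots,n\}$, a representative $\tilde u$ that is absolutely continuous on almost every segment parallel to the $x_j$-axis with $\big\|\frac{\partial \tilde u}{\partial x_j}\big\|_V\le g$ a.e.; the RNP of $V$ makes these partial derivatives exist a.e., and Lemma~\ref{lem:lemma2.12} identifies them with the weak derivatives $\partial_j u$. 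Hence $\|\partial_j u\|_V\le g$ a.e. for every $j$, so that
\[ |\nabla u| = \sqrt{\sum_{j=1}^{n}\|\partial_j u\|_V^2} \le \sqrt{n}\,g \quad\text{a.e. on }\Omega, \]
and the lattice property~(P1) gives $\|\nabla u\|_{X(\Omega)}\le \sqrt{n}\,\|g\|_{X(\Omega)}\le 2n\|g\|_{X(\Omega)}$, which is the asserted bound (in fact it holds with the smaller constant $\sqrt{n}$ in front of $\|g\|_{X(\Omega)}$, and chaining with Step~1 also as $\|\nabla u\|_{X(\Omega)}\le 2n^{3/2}\|h\|_{X(\Omega)}$).

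I do not anticipate a genuine obstacle here: every ingredient has been established above, so this really is a corollary in the literal sense. The only point requiring a little attention is the bookkeeping of constants and the precise reading of ``$g$'' in the statement --- whether it denotes the particular upper gradient delivered by Theorem~\ref{theorem:Maximal} or an arbitrary Reshetnyak upper gradient of $u$; under either interpretation the factor $\sqrt{n}$ from $|\nabla u|\le\sqrt{n}\,g$ combined with $\|g\|_{X(\Omega)}\le 2n\|h\|_{X(\Omega)}$ comfortably yields the stated inequality.
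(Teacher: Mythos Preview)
Your proposal is correct and follows exactly the route the paper intends: the corollary is stated without proof, as it is immediate from Theorem~\ref{theorem:Maximal} together with part~2) of Theorem~\ref{thm:W=R} (via Lemmas~\ref{lem:lemma2.13} and~\ref{lem:lemma2.12}), precisely as you outline. Your observation about the bookkeeping of constants and the ambiguous reading of ``$g$'' versus ``$h$'' is apt; the stated bound $2n\|g\|_{X(\Omega)}$ appears to be a slip for either $\sqrt{n}\,\|g\|_{X(\Omega)}$ or $2n\|h\|_{X(\Omega)}$ (and in any case holds as you note).
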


A sufficiency counterpart to the theorem \ref{theorem:Maximal} (and to corollary \ref{cor:Maximal})
sounds in the following way:
\begin{thm}
Let $\Omega\subset\mathbb R^n$, $V$ be a Banach space, $X(\Omega)$ be a Banach function space
 such that Hardy-Littlewood maximal operator $M$ is bounded in $X(\Omega)$.
If $u\in W^1X(\Omega;V)$, then 
\[
\|u(x) - u(y)\|_{V} \leq C|x-y|\big(M(|\nabla u|)(x)+M(|\nabla u|)(y)\big)
\]
holds for some constant $C$ and almost all $x,y\in\Omega$ with $B(x, 3|x-y|) \subset\Omega$.
\end{thm}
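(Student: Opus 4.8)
The plan is to prove the pointwise estimate by the classical telescoping-over-dyadic-balls argument, reducing everything to the real-valued case already established in the metric Sobolev literature, and then using the linear structure of $V$ to pass from scalar functionals back to the norm.

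First I would fix $x,y\in\Omega$ with $B(x,3|x-y|)\subset\Omega$ being Lebesgue points of $u$, and apply Theorem~\ref{theorem:W-ACL} (via the embedding $W^1X(\Omega;V)\subset W^{1,1}_{loc}(\Omega;V)$) so that the Bochner-integral averages $u_{B}=\fint_B u\,dx$ converge to $u$ at such points. For each fixed $v^*\in V^*$ with $\|v^*\|\le 1$, the scalar function $\langle v^*,u\rangle$ lies in $W^1X(\Omega)\subset W^{1,1}_{loc}(\Omega)$, and by part~1) of Theorem~\ref{thm:W=R} we have $|\nabla\langle v^*,u\rangle|\le|\nabla u|$ a.e. The standard telescoping estimate on chains of balls $B_i=B(x,2^{-i}|x-y|)$ gives, for a.e.\ such $x$,
\[
|\langle v^*,u\rangle(x)-\langle v^*,u\rangle_{B(x,|x-y|)}|\le C|x-y|\,M_{3|x-y|}(|\nabla\langle v^*,u\rangle|)(x)\le C|x-y|\,M(|\nabla u|)(x),
\]
where the first inequality is the usual Poincaré-plus-telescoping bound for $W^{1,1}_{loc}$ functions on $\mathbb R^n$ and $M$ denotes the (restricted) Hardy--Littlewood maximal operator. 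Since $v^*$ was an arbitrary unit functional, taking the supremum over $v^*$ and using that $\|w\|_V=\sup_{\|v^*\|\le1}\langle v^*,w\rangle$ for $w\in V$ (Hahn--Banach), together with the fact that averaging commutes with $v^*$ for the Bochner integral, converts this into
\[
\|u(x)-u_{B(x,|x-y|)}\|_V\le C|x-y|\,M(|\nabla u|)(x)
\]
for a.e.\ $x$. A symmetric estimate holds at $y$ with the ball $B(y,|x-y|)$, and since $B(x,|x-y|)$ and $B(y,|x-y|)$ are comparable balls both contained in $B(x,3|x-y|)$, the triangle inequality together with $\|u_{B(x,|x-y|)}-u_{B(y,|x-y|)}\|_V\lesssim |x-y|\,M(|\nabla u|)(x)$ (again by telescoping through the common larger ball) yields the claimed pointwise bound. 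The boundedness of $M$ on $X(\Omega)$ is not actually needed for the pointwise inequality itself — it only guarantees that the right-hand side lies in $X(\Omega)$, which is why it is part of the hypothesis, and I would note this explicitly.

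The main obstacle is making the interchange $\langle v^*, \fint_B u\rangle=\fint_B\langle v^*,u\rangle$ and the supremum over $v^*$ rigorous while keeping the exceptional null set independent of $v^*$: one must first establish the scalar telescoping inequality for all $v^*$ simultaneously off a single null set. This is handled exactly as in Lemma~\ref{lem:lemma2.13}: by the Pettis measurability theorem $u$ is essentially separably valued, so it suffices to control countably many functionals $\{v_i^*\}$ that norm a dense subset of the range, and the scalar estimate for each $v_i^*$ holds off a null set $\Sigma_i$; then on $\Omega\setminus\bigcup_i\Sigma_i$ the estimate passes to the full supremum by density and continuity of the pairing. With that reduction in place the remaining steps are the routine covering/telescoping computations on $\mathbb R^n$, for which I would simply cite the real-valued case (\cite[Theorem 2.1.13]{Bogachev2010} or the corresponding argument in \cite{HKST2015}) rather than reproduce it.
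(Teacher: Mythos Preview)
Your argument is correct, but the paper itself does not give a detailed proof at all: it simply remarks that the real-valued case is \cite[Theorem~2.2]{JMSV2020} and that the same telescoping/Poincar\'e computation goes through verbatim for $V$-valued functions, since Bochner averages and the vector-valued Poincar\'e inequality on balls behave exactly as in the scalar case. In other words, the paper runs the dyadic-ball chain directly on $u$ with values in $V$, never pairing with functionals.

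Your route is genuinely different: you reduce to the scalar theorem by pairing with $v^*\in V^*$, use $|\nabla\langle v^*,u\rangle|\le|\nabla u|$ from Theorem~\ref{thm:W=R}, then recover the norm via Hahn--Banach, handling the uniformity of the null set through Pettis separability as in Lemma~\ref{lem:lemma2.13}. This has the advantage of using the real-valued result purely as a black box and of making the exceptional set discussion explicit; the paper's approach is shorter but tacitly relies on the reader being comfortable with the vector-valued Lebesgue differentiation and Poincar\'e inequalities. Your observation that the boundedness of $M$ on $X(\Omega)$ is irrelevant for the pointwise inequality itself, and serves only to place the majorant in $X(\Omega)$, is also correct and worth stating.
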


This result has been recently obtained in \cite{JMSV2020} by P. Jain, A. Molchanova, M. Singh, and S. Vodopyanov for the real-valued case.    
It is easy to see that the proof of \cite[Theorem 2.2]{JMSV2020} works for vector-valued functions as well.

\section{Mapping theorems}

If $u\in N^1X(\Omega; V)$ and $f:V\to Z$ is Lipschitz continuous with $f(0)=0$, then it is obvious that $f\circ u \in N^1X(\Omega; Z)$
and $\operatorname{Lip}(f)\rho$ is its $X$-weak upper gradient. 
Here we discuss superpositions of Lipschitz mapping and functions from classes $W^1X$ and $R^1X$.

\begin{thm}\label{thm:mapping1}
Suppose that $V,Z$ are Banach spaces such that $Z$ has the Radon-Nikod\'ym property, and $X(\Omega)$ is a Banach function space.
Let $f:V\to Z$ be Lipschitz continuous and assume that $f(0)=0$ if $|\Omega|=\infty$.
Then $f\circ u\in W^1X(\Omega;Z)$ for any $u\in W^1X(\Omega;V)$. 
\end{thm}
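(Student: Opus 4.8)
The plan is to reduce the statement to the already-established equivalence $W^1X = R^1X$ for spaces with the Radon-Nikod\'ym property (Theorem~\ref{thm:RNP}, or more precisely part~2 of Theorem~\ref{thm:W=R}), so that it suffices to show $f\circ u \in R^1X(\Omega;Z)$. First I would take $u\in W^1X(\Omega;V)$ and use part~1 of Theorem~\ref{thm:W=R} to know that $u\in R^1X(\Omega;V)$ with Reshetnyak upper gradient $|\nabla u|\in X(\Omega)$. By Lemma~\ref{lem:lemma2.13}, for each $j$ there is a representative $\tilde u$, absolutely continuous on almost every line segment parallel to the $x_j$-axis, with
\[
\lim_{h\to 0}\frac{\|\tilde u(x+he_j) - \tilde u(x)\|_V}{h} \leq |\nabla u|(x) \quad \text{a.e.}
\]
Composing with $f$, the Lipschitz estimate $\|f(a)-f(b)\|_Z \leq \operatorname{Lip}(f)\|a-b\|_V$ shows that $f\circ\tilde u$ is also absolutely continuous on almost every such segment, and
\[
\Big\|\frac{\partial (f\circ\tilde u)}{\partial x_j}\Big\|_Z \leq \operatorname{Lip}(f)\,|\nabla u|(x) \quad \text{a.e.},
\]
wherever the derivative exists. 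Since $Z$ has the RNP, by Proposition~\ref{prop:RNP} the partial derivative of the absolutely continuous map $f\circ\tilde u$ along almost every such line exists a.e.

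Next I would check the integrability side: $f\circ u \in X(\Omega;Z)$. Strong measurability of $f\circ u$ follows since $f$ is continuous and $u$ is strongly measurable (the composition of a continuous map with a strongly measurable map is strongly measurable). For the norm, when $f(0)=0$ we have $\|f(u(x))\|_Z \leq \operatorname{Lip}(f)\|u(x)\|_V$, so $\|f\circ u(\cdot)\|_Z \in X(\Omega)$ by the lattice property (P1), hence $f\circ u\in X(\Omega;Z)$; when $|\Omega|<\infty$ (no assumption $f(0)=0$), we instead bound $\|f(u(x))\|_Z \leq \|f(0)\|_Z + \operatorname{Lip}(f)\|u(x)\|_V$, and the constant function $\|f(0)\|_Z\chi_\Omega$ belongs to $X(\Omega)$ by (P3), so again (P1) gives $f\circ u\in X(\Omega;Z)$.

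Then I would invoke Lemma~\ref{lem:lemma2.12} with the Banach space $Z$ (which has the RNP), the function $f\circ u$, its representative $f\circ\tilde u$, and the dominating function $g = \operatorname{Lip}(f)\,|\nabla u| \in X(\Omega)$: the hypotheses — absolute continuity on almost every line parallel to each axis, existence of partials, and the bound $\|\partial_j(f\circ\tilde u)\|_Z \leq g$ a.e. — are exactly what we have verified. The conclusion of that lemma is that $f\circ u\in W^1X(\Omega;Z)$, with the norm bound $\|f\circ u\|_{W^1X} \leq \|f\circ u\|_{X(\Omega;Z)} + \sqrt{n}\operatorname{Lip}(f)\||\nabla u|\|_{X(\Omega)}$. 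One subtlety worth noting is that Lemma~\ref{lem:lemma2.13} produces a representative $\tilde u$ that is good along the $x_j$-direction for each fixed $j$ separately; I would either fix $j$ and run Lemma~\ref{lem:lemma2.12}'s argument coordinate by coordinate (which is in fact how that lemma is proved — it fixes $j$ and produces the $j$-th weak derivative), or remark that the weak derivative $\partial_j(f\circ u)$ obtained this way lies in $X(\Omega;Z)$ for every $j$. The main obstacle is precisely this bookkeeping of representatives together with the verification that composition with Lipschitz $f$ preserves the line-wise absolute continuity and respects the upper gradient bound; everything else is a direct appeal to Lemmas~\ref{lem:lemma2.12} and~\ref{lem:lemma2.13} and the RNP of $Z$.
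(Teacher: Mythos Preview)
Your proposal is correct and follows essentially the same route as the paper: obtain a representative $\tilde u$ that is absolutely continuous along almost every coordinate line, observe that $f\circ\tilde u$ inherits this property, use the RNP of $Z$ to get the partial derivatives of $f\circ\tilde u$, bound them by $\operatorname{Lip}(f)\,|\nabla u|$, and finish with Lemma~\ref{lem:lemma2.12}. The paper does exactly this, only more tersely --- it simply asserts the existence of the ACL representative of $u$ and does not spell out the integrability check $f\circ u\in X(\Omega;Z)$, both of which you handle explicitly. One cosmetic remark: your opening sentence announces a reduction to showing $f\circ u\in R^1X(\Omega;Z)$, but your actual argument bypasses that and lands directly in $W^1X(\Omega;Z)$ via Lemma~\ref{lem:lemma2.12}; you may want to drop that framing since it does not match what you end up doing.
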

\begin{proof}
Let $u\in W^1X(\Omega;V)$. 
There is a representative $\tilde u$ which is absolutely continuous on lines in $\Omega$;
then the same holds for $f\circ \tilde u$, which is a representative of $f\circ u$.
Due to the RNP of $Z$ there exist partial derivatives $\frac{\partial f\circ\tilde u}{\partial x_j}$.
For almost all $x\in\Omega$ we have
\begin{multline*}
\bigg\|\frac{\partial f \circ\tilde u}{\partial x_j}(x) \bigg\|_Z 
= \lim_{h\to 0}\frac{\|f\circ \tilde u(x+he_j) - f\circ \tilde u(x)\|_Z}{|h|}\\ 
\leq \lim_{h\to 0} L\frac{\| \tilde u(x+he_j) - \tilde u(x)\|_V}{|h|}
= L\bigg\|\frac{\partial \tilde u}{\partial x_j}(x) \bigg\|_V = L\|\partial_j u(x)\|_V,
\end{multline*}
where $L=\operatorname{Lip}(f)$.
Let $g(x) = L|\nabla u(x)|$, then, by lemma \ref{lem:lemma2.12}, $f\circ u$ belongs to $W^1X(\Omega;Z)$.
\end{proof}

\begin{thm}\label{thm:mapping2}
Let $\Omega\subset\mathbb R^{n}$ be open, $V$, $Z$ be Banach spaces,
and $f:V\to Z$ a Lipschitz continuous mapping ($f(0)=0$ in the case $|\Omega|=\infty$).
Then, $f\circ u \in R^1X(\Omega;Z)$ whenever $u \in R^1X(\Omega;V)$.
\end{thm}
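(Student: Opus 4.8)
The plan is to verify directly the two defining conditions (A) and (B) of $R^1X(\Omega;Z)$ for the function $f\circ u$, reducing everything to the scalar functions $\langle z^*, f\circ u\rangle$ with $z^*\in Z^*$, $\|z^*\|\le1$, and then quoting the lemmas of the preceding subsection. The conceptual point is that in this argument one only ever differentiates real-valued functions, so no Radon--Nikod\'ym assumption on $Z$ is needed. Write $L=\operatorname{Lip}(f)$. First I would check that $f\circ u\in X(\Omega;Z)$: strong measurability of $f\circ u$ follows from that of $u$ together with the continuity of $f$, and for the norm one has $\|f\circ u(x)\|_Z\le L\|u(x)\|_V$ when $f(0)=0$, while if $|\Omega|<\infty$ then $\|f\circ u(x)\|_Z\le\|f(0)\|_Z+L\|u(x)\|_V$ with $\|f(0)\|_Z\,\chi_\Omega\in X(\Omega)$ by (P3); in both cases the lattice property (P1) yields $\|f\circ u(\cdot)\|_Z\in X(\Omega)$.

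Next, fix $j\in\{1,\dots,n\}$ and let $g\in X(\Omega)$ be a Reshetnyak upper gradient of $u$. By Lemma \ref{lem:lemma2.13} there is a representative $\tilde u$ of $u$, possibly depending on $j$, which is absolutely continuous on almost every compact line segment in $\Omega$ parallel to the $x_j$-axis and satisfies
\[
\lim_{h\to0}\frac{\|\tilde u(x+he_j)-\tilde u(x)\|_V}{|h|}\le g(x)\qquad\text{for a.e. }x\in\Omega.
\]
Since $f$ is $L$-Lipschitz, $f\circ\tilde u$ is a representative of $f\circ u$ which is absolutely continuous on almost every such segment, and hence so is the real-valued function $\langle z^*, f\circ\tilde u\rangle$ for every $z^*\in Z^*$ with $\|z^*\|\le1$ (recall $z^*$ is $1$-Lipschitz). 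A real-valued absolutely continuous function of one variable is differentiable a.e., so $\partial_j\langle z^*, f\circ\tilde u\rangle$ exists a.e. on those segments, hence (by Fubini) a.e. in $\Omega$, and
\[
\bigl|\partial_j\langle z^*, f\circ\tilde u\rangle(x)\bigr|\le\limsup_{h\to0}\frac{\|f\circ\tilde u(x+he_j)-f\circ\tilde u(x)\|_Z}{|h|}\le L\,g(x)\qquad\text{a.e.},
\]
where the last inequality combines the $L$-Lipschitz bound for $f$ with the displayed estimate above.

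Now I would apply Lemma \ref{lem:lemma2.12} with target space $\mathbb R$, which has the Radon--Nikod\'ym property: for each $z^*$ as above, $\langle z^*, f\circ u\rangle$ has, for every $j$, a representative absolutely continuous on a.e. line parallel to the $x_j$-axis with $j$-th partial derivative bounded in modulus by $Lg\in X(\Omega)$, hence $\langle z^*, f\circ u\rangle\in W^1X(\Omega)$; this is condition (A). Moreover $|\partial_j\langle z^*, f\circ u\rangle|\le Lg$ a.e. for every $j$, so $|\nabla\langle z^*, f\circ u\rangle|\le\sqrt{n}\,L\,g$ a.e. with a bound independent of $z^*$, and since $\sqrt{n}\,L\,g\in X(\Omega)$ this is condition (B). Thus $f\circ u\in R^1X(\Omega;Z)$ with $\sqrt{n}\,L\,g$ a Reshetnyak upper gradient, and passing to the infimum over all such $g$ one also obtains $\|f\circ u\|_{R^1X}\le\|f\circ u\|_{X(\Omega;Z)}+\sqrt{n}\,L\,(\|u\|_{R^1X}-\|u\|_{X(\Omega;V)})$. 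There is no genuinely hard step here: the whole weight of the argument is carried by Lemma \ref{lem:lemma2.13}, and the only point needing a moment's attention is that the representative it produces may depend on $j$ — which is harmless, since Lemma \ref{lem:lemma2.12} is already stated so as to permit a $j$-dependent representative.
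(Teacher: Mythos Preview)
Your proof is correct and follows essentially the same route as the paper: one reduces to the scalar composition $\psi\circ u$ with $\psi=z^*\circ f:V\to\mathbb R$ Lipschitz, and then uses the ACL-type representative from Lemma~\ref{lem:lemma2.13} together with Lemma~\ref{lem:lemma2.12} (target $\mathbb R$, which has the RNP) to obtain $\langle z^*,f\circ u\rangle\in W^1X(\Omega)$ with gradient bounded by $Lg$. The only difference is packaging: the paper invokes Theorem~\ref{thm:mapping1} for $\psi\circ u$, whereas you unpack that invocation directly into Lemmas~\ref{lem:lemma2.13} and~\ref{lem:lemma2.12}; your version is arguably cleaner, since Theorem~\ref{thm:mapping1} as stated assumes $u\in W^1X$ rather than $R^1X$, and your remark about the $j$-dependent representative being harmless is exactly what makes the argument go through.
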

\begin{proof}
Let $u \in R^1X(\Omega;V)$, and $z^*\in Z^*$ with $\|z^*\|\leq 1$. 
It is clear that $f\circ u\in X(\Omega;V)$.
Define function $\psi:V\to\mathbb R$ by the rule $\psi(v) = \langle z^*, f(v) \rangle$.
Then, $\psi$ is Lipschitz continuous, and $\langle z^*, f\circ u \rangle = \psi\circ u$.
With the help of theorem \ref{thm:mapping1}, the last guarantees  $\langle z^*, f\circ u \rangle \in W^1X(\Omega)$ 
and $|\nabla \langle z^*, f\circ u \rangle| \leq Lg$.
Thus, we conclude that $f\circ u \in R^1X(\Omega;Z)$.
\end{proof}

\begin{thm}\label{thm:mapping2RNP}
Let $\Omega\subset\mathbb R^{n}$ be open, and $V$, $Z$ be Banach spaces, $V\ne\{0\}$.
If for any Lipschitz mapping $f:V\to Z$ we have $f\circ u \in W^1X(\Omega;Z)$ whenever $u \in W^1X(\Omega;V)$,
then $Z$ has the Radon-Nikod\'ym property.
\end{thm}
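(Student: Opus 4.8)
The plan is to prove the statement directly, without contradiction: assuming the stated superposition property, I will show that \emph{every} Lipschitz map $g\colon\mathbb R\to Z$ is differentiable almost everywhere, whence $Z$ has the Radon--Nikod\'ym property by Proposition~\ref{prop:RNP}. The mechanism is to encode an arbitrary real-variable Lipschitz curve $g$ as a superposition $f\circ u$ with a suitably chosen Lipschitz $f\colon V\to Z$ and $u\in W^1X(\Omega;V)$, and then to read off the regularity of $g$ from the membership $f\circ u\in W^1X(\Omega;Z)$ guaranteed by hypothesis.

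For the construction, fix a nonzero $v_0\in V$ (possible since $V\ne\{0\}$) and, by Hahn--Banach, a functional $v_0^*\in V^*$ with $\langle v_0^*,v_0\rangle=1$. Given a Lipschitz $g\colon\mathbb R\to Z$ I may assume $g(0)=0$, since subtracting the constant $g(0)$ changes neither the Lipschitz constant nor the set of differentiability points. Put $f(v)=g(\langle v_0^*,v\rangle)$; then $\|f(v)-f(w)\|_Z\le \operatorname{Lip}(g)\,\|v_0^*\|\,\|v-w\|_V$, so $f$ is Lipschitz with $f(0)=0$. Choose concentric cubes $Q_1\Subset Q_2\Subset\Omega$ and a cut-off $\eta\in C_0^\infty(Q_2)$ with $\eta\equiv 1$ on $Q_1$, and set $w(x)=\eta(x)x_1$, so that $w\in C_0^\infty(\Omega)$, $w$ has bounded support, and $w(x)=x_1$ on $Q_1$; by (P1) and (P3), $w\in W^1X(\Omega)$. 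Finally let $u(x)=w(x)v_0$. Then $u$ is strongly measurable, $\|u\|_{X(\Omega;V)}=\|v_0\|_V\|w\|_X<\infty$, and $\partial_j u=(\partial_j w)v_0\in X(\Omega;V)$, so $u\in W^1X(\Omega;V)$. By hypothesis $f\circ u\in W^1X(\Omega;Z)$, and note that $f\circ u(x)=g(\langle v_0^*,w(x)v_0\rangle)=g(w(x))$, so that $f\circ u(x)=g(x_1)$ for $x\in Q_1$.

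Now I extract differentiability of $g$. Write $Q_1=I\times Q_1'$ with $I\subset\mathbb R$ an open interval. Since $f\circ u\in W^1X(\Omega;Z)\subset W^{1,1}_{\mathrm{loc}}(\Omega;Z)$ (the inclusion using (P4)), restriction to $Q_1\Subset\Omega$ gives $f\circ u\in W^{1,1}(Q_1;Z)$. Testing the defining identity of the weak derivative against test functions of product form $\psi(x_1)\chi(x')$ with $\int\chi\,dx'=1$ shows that $t\mapsto g(t)$ lies in $W^{1,1}(I;Z)$, with weak derivative $\int_{Q_1'}\partial_1(f\circ u)(\cdot,x')\chi(x')\,dx'\in L^1(I;Z)$. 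Hence $g$ coincides on $I$ with an indefinite Bochner integral plus a constant; being also continuous, $g(t)=g(t_0)+\int_{t_0}^{t}g'(s)\,ds$ on $I$, and therefore, by the Lebesgue differentiation theorem for Bochner integrable functions (valid in every Banach space), $g$ is differentiable at a.e. point of $I$. To pass from the fixed interval $I$ to all of $\mathbb R$, I apply the argument to the rescaled curves $g_\lambda(t)=\lambda^{-1}g(\lambda t)$, $\lambda\in\mathbb R\setminus\{0\}$: each $g_\lambda$ is Lipschitz with $g_\lambda(0)=0$ and is differentiable at $t$ precisely when $g$ is differentiable at $\lambda t$, so $g$ is differentiable a.e. on $\lambda I$; since $\bigcup_{\lambda\ne 0}\lambda I=\mathbb R\setminus\{0\}$ (using both signs of $\lambda$), $g$ is differentiable almost everywhere on $\mathbb R$. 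As $g$ was arbitrary, Proposition~\ref{prop:RNP} yields that $Z$ has the Radon--Nikod\'ym property.

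The main obstacle is conceptual rather than computational: the $ACL$-type arguments used elsewhere in the paper only produce a representative of $f\circ u$ that is absolutely continuous on lines, and mere local absolute continuity of the resulting $Z$-valued curve $g$ is perfectly consistent with the \emph{failure} of the Radon--Nikod\'ym property, so it cannot by itself force the conclusion. The point is that $f\circ u$ has an honest weak partial derivative in $L^1_{\mathrm{loc}}(\Omega;Z)$, which transfers to a genuine weak derivative of $g$ on an interval and thereby upgrades to almost-everywhere differentiability of $g$ through the Lebesgue differentiation theorem for Bochner integrals. The subsidiary point is the rescaling trick needed to deduce differentiability on all of $\mathbb R$ from differentiability on the single interval $I$ determined by the chosen cube.
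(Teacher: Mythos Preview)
Your proof is correct and follows essentially the same route as the paper: the construction of $f(v)=g(\langle v_0^*,v\rangle)$ and $u(x)=\eta(x)x_1\,v_0$, together with the observation that $f\circ u(x)=g(x_1)$ on a cube, is identical to the paper's argument. The only differences are cosmetic --- you argue directly rather than by contradiction, you spell out the passage through $W^{1,1}_{\mathrm{loc}}(\Omega;Z)$ and the Bochner Lebesgue differentiation theorem (which is what underlies the paper's appeal to Theorem~\ref{theorem:W-ACL} in the vector-valued setting), and your rescaling trick replaces the paper's ``we may assume $[a,b]^n\Subset\Omega$''.
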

\begin{proof}
Suppose $Z$ does not have the RNP.
Then there is a Lipschitz function $h:[a,b]\to Z$, which is not differentiable almost everywhere.
Fix elements $v_0\in V$ and $v^*_0\in V^*$ so that $\langle v_0^*, v_0 \rangle=1$.
Consider the next function $f(v)=h(\langle v_0^*, v \rangle)$; it is clear that $f:V\to Z$ is Lipschitz continuous.
We can assume that $Q=[a,b]^n\Subset\Omega$.
Choose function $\eta\in C_0^\infty(\Omega)$ such that $\eta(x)=1$ when $x\in Q$.
Then, we define function $u(x)=v_0\cdot x_1\eta(x)$, which is in $W^1X(\Omega;V)$.
Therefore, by the assumption $f\circ u\in W^1X(\Omega;V)$.  
On the other hand, $f\circ u(x) = h(x_1)$ when $x\in Q$, 
meaning that $f\circ u$ is not differentiable almost everywhere on intervals in $Q$,
and this contradicts to theorem \ref{theorem:W-ACL}.  
 
\end{proof}

The following lemma is similar to \cite[Corollary 3.4. and Corollary 3.4.]{AK2018}.
\begin{lem}\label{lem:norm}
If $u\in R^1X(\Omega;V)$, then $\|u(\cdot)\|_V \in W^1X(\Omega)$ and
$|\partial_j\|u(\cdot)\|_V| \leq g$ a.e.,
where $g\in X(\Omega)$ is a Reshetnyak upper gradient of $u$. 
\end{lem}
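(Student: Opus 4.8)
The plan is to reduce the statement about the scalar function $\|u(\cdot)\|_V$ to the already-established properties of elements of $R^1X(\Omega;V)$, namely Lemma \ref{lem:lemma2.13} together with the sufficient condition of Lemma \ref{lem:lemma2.12}. First I would observe that $\|u(\cdot)\|_V \in X(\Omega)$ is automatic, since by definition $u \in X(\Omega;V)$ means precisely $\|u(\cdot)\|_V \in X(\Omega)$. So the real content is to show that $\|u(\cdot)\|_V$ has weak partial derivatives in $X(\Omega)$ dominated by a Reshetnyak upper gradient $g$ of $u$.

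Fix $j \in \{1,\dots,n\}$. By Lemma \ref{lem:lemma2.13} there is a representative $\tilde u$ of $u$ which is absolutely continuous on almost every compact line segment in $\Omega$ parallel to the $x_j$-axis, and which satisfies, for a.e.\ such segment $l(\tau) = x_0 + \tau e_j$ and all $a \le s \le t \le b$, the estimate
\[
\|\tilde u(x_0 + te_j) - \tilde u(x_0 + se_j)\|_V \le \int_s^t g(x_0 + \tau e_j)\, d\tau .
\]
The reverse triangle inequality $\big|\,\|\tilde u(x_0+te_j)\|_V - \|\tilde u(x_0+se_j)\|_V\,\big| \le \|\tilde u(x_0+te_j) - \tilde u(x_0+se_j)\|_V$ then shows that the scalar function $x \mapsto \|\tilde u(x)\|_V$ is absolutely continuous on almost every such segment, with the same integral bound by $g$ along the segment. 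Hence, on almost every line parallel to the $x_j$-axis, the derivative $\frac{\partial}{\partial x_j}\|\tilde u(\cdot)\|_V$ exists (it is a scalar absolutely continuous function of one variable) and satisfies $\big|\frac{\partial}{\partial x_j}\|\tilde u(\cdot)\|_V\big| \le g$ a.e.\ on that line, hence a.e.\ on $\Omega$ by Fubini. This in particular gives $\big|\frac{\partial}{\partial x_j}\|\tilde u(\cdot)\|_V\big| \in X(\Omega)$ by the lattice property (P1), since $g \in X(\Omega)$.

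Now I would apply Lemma \ref{lem:lemma2.12} with $V = \mathbb R$: the scalar function $\|u(\cdot)\|_V$ lies in $X(\Omega)$, has a representative absolutely continuous on a.e.\ line parallel to each coordinate axis, and its pointwise partial derivatives are dominated by $g \in X(\Omega)$. (Note $\mathbb R$ trivially has the RNP, so Lemma \ref{lem:lemma2.12} applies without extra hypotheses.) Therefore $\|u(\cdot)\|_V \in W^1X(\Omega)$ and the weak derivative $\partial_j \|u(\cdot)\|_V$ agrees a.e.\ with the classical partial derivative of the representative, so $|\partial_j \|u(\cdot)\|_V| \le g$ a.e.\ for every $j$, as claimed. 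The only mild subtlety — and the step I would be most careful about — is making sure the representative furnished by Lemma \ref{lem:lemma2.13} is chosen consistently for all $j$ simultaneously (the lemma is stated one coordinate at a time); but since changing $u$ on a null set does not affect $\|u(\cdot)\|_V$ as an element of $X(\Omega)$, one can simply note that for each $j$ the required line-absolute-continuity and the bound by $g$ hold for the original $u$ up to a null set, which is exactly the hypothesis needed to invoke Lemma \ref{lem:lemma2.12}.
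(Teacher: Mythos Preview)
Your proof is correct. It differs from the paper's route in the following way: the paper first invokes the mapping theorem (Theorem \ref{thm:mapping2}) with the $1$-Lipschitz map $v\mapsto\|v\|_V$ to conclude $\|u(\cdot)\|_V\in R^1X(\Omega)=W^1X(\Omega)$, and only then applies Lemma \ref{lem:lemma2.13} together with the reverse triangle inequality to obtain the pointwise bound $|\partial_j\|u(\cdot)\|_V|\le g$. You instead bypass the mapping theorem entirely and go straight through Lemmas \ref{lem:lemma2.13} and \ref{lem:lemma2.12}, obtaining membership in $W^1X(\Omega)$ and the derivative bound in one stroke. Your approach is more self-contained and elementary (it does not rely on the chain Theorem \ref{thm:mapping2} $\to$ Theorem \ref{thm:mapping1}), while the paper's version is shorter on the page by outsourcing the work to an already-proved general result. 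Your remark about the per-coordinate representatives is well placed: Lemma \ref{lem:lemma2.12} is stated so as to allow a different representative for each $j$, so no extra care is needed there.
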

\begin{proof}
Let $u\in R^1X(\Omega;V)$, then, by theorem \ref{thm:mapping2},  
$\|u(\cdot)\|_V \in R^1X(\Omega)=W^1X(\Omega)$.
Then, with the help of lemma \ref{lem:lemma2.13}, we infer
$$
\lim_{h\to 0}\frac{|\|u(x+he_j)\|_V - \|u(x)\|_V |}{|h|}
\leq \lim_{h\to 0}\frac{\|u(x+he_j) - u(x)\|_V }{|h|}
\leq g(x)
$$
for almost all $x\in\Omega$.
\end{proof}

\begin{thm}\label{thm:emb}
Let $\Omega\subset\mathbb R^n$ be open such that we have 
a continuous embedding $W^1X(\Omega)\hookrightarrow Y(\Omega)$ for some Banach function space $Y(\Omega)$. 
Then we also have a continuous embedding $W^1X(\Omega;V)\hookrightarrow Y(\Omega;V)$.
\end{thm}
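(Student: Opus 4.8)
The plan is to reduce the vector-valued embedding to the scalar hypothesis by passing to the real-valued function $x\mapsto\|u(x)\|_V$, which the Reshetnyak machinery developed above already controls. Let $C$ denote the norm of the given embedding $W^1X(\Omega)\hookrightarrow Y(\Omega)$, so that $\|f\|_{Y(\Omega)}\le C\|f\|_{W^1X}$ for every $f\in W^1X(\Omega)$, and fix an arbitrary $u\in W^1X(\Omega;V)$.

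First I would invoke Theorem~\ref{thm:W=R}(1) to obtain $u\in R^1X(\Omega;V)$ with $|\nabla u|$ a Reshetnyak upper gradient of $u$. Then Lemma~\ref{lem:norm} yields $\|u(\cdot)\|_V\in W^1X(\Omega)$ together with $|\partial_j\|u(\cdot)\|_V|\le|\nabla u|$ a.e.\ for each $j$, hence $|\nabla\|u(\cdot)\|_V|\le\sqrt n\,|\nabla u|$ a.e.; combined with the identity $\big\|\|u(\cdot)\|_V\big\|_{X(\Omega)}=\|u\|_{X(\Omega;V)}$ this gives
\[
\big\|\,\|u(\cdot)\|_V\,\big\|_{W^1X}\;\le\;\sqrt n\,\|u\|_{W^1X}.
\]
Applying the scalar embedding to $f=\|u(\cdot)\|_V$ now shows $\|u(\cdot)\|_V\in Y(\Omega)$ with $\big\|\,\|u(\cdot)\|_V\,\big\|_{Y(\Omega)}\le C\sqrt n\,\|u\|_{W^1X}$. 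Since $u$ is by definition strongly measurable, membership $\|u(\cdot)\|_V\in Y(\Omega)$ is precisely $u\in Y(\Omega;V)$, and the last inequality is $\|u\|_{Y(\Omega;V)}\le C\sqrt n\,\|u\|_{W^1X}$; thus the embedding $W^1X(\Omega;V)\hookrightarrow Y(\Omega;V)$ is continuous.

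In this route there is essentially no obstacle remaining: all the substantive work has been absorbed into Theorem~\ref{thm:W=R} and Lemma~\ref{lem:norm}, and what is left is bookkeeping of constants. If one preferred an argument not invoking Lemma~\ref{lem:norm}, the same conclusion follows directly: by Pettis' theorem choose a countable $\{v_i^*\}\subset V^*$ with $\|v_i^*\|\le1$ and $\|u(x)\|_V=\sup_i|\langle v_i^*,u(x)\rangle|$ a.e.; set $g_N:=\max_{1\le i\le N}|\langle v_i^*,u(\cdot)\rangle|$, which lies in $W^1X(\Omega)$ with $\|g_N\|_{W^1X}\le\|u\|_{W^1X}$ (here one uses $W^1X(\Omega)\subset W^{1,1}_{loc}(\Omega)$ from Theorem~\ref{theorem:W-ACL} for the truncation identities, and the lattice property~(P1) together with the pointwise bounds $g_N\le\|u(\cdot)\|_V$ and $|\nabla g_N|\le|\nabla u|$); apply the scalar embedding to each $g_N$, and let $N\to\infty$ using the Fatou property~(P2) of $Y(\Omega)$ (equivalently Lemma~\ref{lemma:Fatou}), since $0\le g_N\nearrow\|u(\cdot)\|_V$ a.e. The only mildly delicate point in that variant is verifying that the truncation and gradient estimates are legitimate at the level of the general Banach function norm, which is exactly why one passes through $W^{1,1}_{loc}$ to borrow them and then re-inserts the outcome into the $X$-scale via the lattice property.
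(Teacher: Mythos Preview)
Your argument is correct and follows the same route as the paper: pass from $W^1X(\Omega;V)$ to $R^1X(\Omega;V)$ via Theorem~\ref{thm:W=R}(1), apply Lemma~\ref{lem:norm} to get $\|u(\cdot)\|_V\in W^1X(\Omega)$ with the gradient bound, and then feed this into the scalar embedding, arriving at the same constant $C\sqrt n$. The alternative argument you sketch (via a norming sequence $\{v_i^*\}$, truncations $g_N$, and the Fatou property of $Y$) is also correct and in fact delivers the sharper constant $C$ in place of $C\sqrt n$, since there one obtains $|\nabla g_N|\le|\nabla u|$ directly rather than the coordinate-wise bound from Lemma~\ref{lem:norm}.
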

\begin{proof}
Let $u\in W^1X(\Omega;V)$. 
Then by lemma \ref{lem:norm} $\|u\|_V\in W^1X(\Omega)$, and by the assumption 
$\|u\|_V\in Y(\Omega)$. The last implies $u\in Y(\Omega; V)$.

Now let $C$ be the norm of the real-valued embedding.
Again, using lemma \ref{lem:norm}, we derive
$$
\|u\|_{Y(\Omega; V)} = \big\|\|u\|_V \big\|_{Y(\Omega)}
\leq C \big\|\|u\|_V \big\|_{W^1X(\Omega)}
\leq C \sqrt{n}\|u\|_{W^1X(\Omega;V)}.
$$
\end{proof}




\bibliographystyle{abbrv}
\bibliography{kothe}

\begin{thebibliography}{10}

\bibitem{AK2018}
W.~{Arendt} and M.~{Kreuter}.
\newblock {Mapping theorems for Sobolev spaces of vector-valued functions.}
\newblock {\em {Stud. Math.}}, 240(3):275--299, 2018.

\bibitem{BL2000}
Y.~{Benyamini} and J.~{Lindenstrauss}.
\newblock {\em {Geometric nonlinear functional analysis. Volume 1.}},
  volume~48.
\newblock Providence, RI: American Mathematical Society (AMS), 2000.

\bibitem{Bogachev2010}
V.~I. {Bogachev}.
\newblock {\em {Differentiable measures and the Malliavin calculus.}}, volume
  164.
\newblock Providence, RI: American Mathematical Society (AMS), 2010.

\bibitem{Brezis2011}
H.~{Brezis}.
\newblock {\em {Functional analysis, Sobolev spaces and partial differential
  equations.}}
\newblock New York, NY: Springer, 2011.

\bibitem{CJPA2020}
I.~{Caama{\~n}o}, J.~A. {Jaramillo}, {\'A}.~{Prieto}, and A.~{Ruiz de
  Alarc{\'o}n}.
\newblock {Sobolev spaces of vector-valued functions}.
\newblock {\em arXiv e-prints}, page arXiv:2008.03040, Aug. 2020.

\bibitem{EM2020}
N.~Evseev and A.~Menovschikov.
\newblock Sobolev space of functions valued in a monotone banach family.
\newblock {\em Journal of Mathematical Analysis and Applications},
  492(1):124440, 2020.

\bibitem{F1995}
W.~{Farka\c{s}}.
\newblock {A Calder\'on-Zygmund extension theorem for abstract Sobolev spaces.}
\newblock {\em {Stud. Cercet. Mat.}}, 47(5-6):379--395, 1995.

\bibitem{HP2000}
P.~{Haj{\l}asz} and P.~{Koskela}.
\newblock {\em {Sobolev met Poincar\'e.}}, volume 688.
\newblock Providence, RI: American Mathematical Society (AMS), 2000.

\bibitem{HT2008}
P.~{Haj{\l}asz} and J.~{Tyson}.
\newblock {Sobolev Peano cubes.}
\newblock {\em {Mich. Math. J.}}, 56(3):687--702, 2008.

\bibitem{H2007}
J.~{Heinonen}.
\newblock {Nonsmooth calculus.}
\newblock {\em {Bull. Am. Math. Soc., New Ser.}}, 44(2):163--232, 2007.

\bibitem{HKST01}
J.~Heinonen, P.~Koskela, N.~Shanmugalingam, and J.~T. Tyson.
\newblock Sobolev classes of {B}anach space-valued functions and quasiconformal
  mappings.
\newblock {\em J. Anal. Math.}, 85:87--139, 2001.

\bibitem{HKST2015}
J.~{Heinonen}, P.~{Koskela}, N.~{Shanmugalingam}, and J.~T. {Tyson}.
\newblock {\em {Sobolev spaces on metric measure spaces. An approach based on
  upper gradients.}}, volume~27.
\newblock Cambridge: Cambridge University Press, 2015.

\bibitem{HVVW2016}
T.~{Hyt\"onen}, J.~{van Neerven}, M.~{Veraar}, and L.~{Weis}.
\newblock {\em {Analysis in Banach spaces. Volume I. Martingales and
  Littlewood-Paley theory.}}, volume~63.
\newblock Cham: Springer, 2016.

\bibitem{JMSV2020}
P.~Jain, A.~Molchanova, M.~Singh, and S.~Vodopyanov.
\newblock On grand sobolev spaces and pointwise description of banach function
  spaces.
\newblock {\em Nonlinear Analysis}, 202:112100, 2021.

\bibitem{L2004}
P.-K. {Lin}.
\newblock {\em {K\"othe-Bochner function spaces.}}
\newblock Boston, MA: Birkh\"auser, 2004.

\bibitem{M2013}
L.~{Mal\'y}.
\newblock {Minimal weak upper gradients in Newtonian spaces based on
  quasi-Banach function lattices.}
\newblock {\em {Ann. Acad. Sci. Fenn., Math.}}, 38(2):727--745, 2013.

\bibitem{Maly2016}
L.~{Mal\'y}.
\newblock {Newtonian spaces based on quasi-Banach function lattices.}
\newblock {\em {Math. Scand.}}, 119(1):133--160, 2016.

\bibitem{PKJF2013}
L.~{Pick}, A.~{Kufner}, O.~{John}, and S.~{Fu\v{c}\'{\i}k}.
\newblock {\em {Function spaces. Volume 1. 2nd revised and extended ed.}},
  volume~14.
\newblock Berlin: de Gruyter, 2nd revised and extended ed. edition, 2013.

\bibitem{Reshetnyak97}
Y.~G. {Reshetnyak}.
\newblock {Sobolev-type classes of functions with values in a metric space.}
\newblock {\em {Sib. Math. J.}}, 38(3):657--675, 1997.

\bibitem{S1994}
A.~R. {Schep}.
\newblock {Minkowski's integral inequality for function norms.}
\newblock In {\em {Operator theory in function spaces and Banach lattices.
  Essays dedicated to A. C. Zaanen on the occasion of his 80th birthday.
  Symposium, Univ. of Leiden, NL, September 1993}}, pages 299--308. Basel:
  Birkh\"auser, 1994.

\end{thebibliography}

\end{document}